\newtheorem{RK}{Remark}
\newtheorem{define}{Definition}
\newtheorem{ppt}{Property}
\newtheorem*{ntn*}{Notation}
\newtheorem{ntn}{Notation}
\newtheorem{lem}{Lemma}
\newtheorem{thm}{Theorem}
\newtheorem{cor}{Corollary}
\newtheorem{fct}{Fact}
\newcommand{\nn}{\nonumber}
\newcommand{\mc}{\mathcal}
\newcommand{\mbb}{\mathbb}
\DeclareMathOperator{\rank}{rank}
\DeclareMathOperator{\spr}{sprank}
\newcommand{\be}{\begin{equation}}
\newcommand{\ee}{\end{equation}}
\newcommand{\bc}{\begin{center}}
\newcommand{\ec}{\end{center}}
\newcommand{\bfl}{\begin{flushleft}}
\newcommand{\efl}{\end{flushleft}}
\newcommand{\beqa}{\begin{eqnarray}}
\newcommand{\eeqa}{\end{eqnarray}}
\newcommand{\beqan}{\begin{eqnarray*}}
\newcommand{\eeqan}{\end{eqnarray*}}
\newcommand{\beq}{\begin{equation}}
\newcommand{\eeq}{\end{equation}}
\begin{document}
{\huge\title{Minimum Sparsity of Unobservable \\Power Network Attacks}} 

\author{
\IEEEauthorblockN{Yue Zhao, Andrea Goldsmith, and H. Vincent Poor}

\thanks{This research was supported in part by the DTRA under Grant HDTRA1-08-1-0010, in part by the Air Force Office of Scientific Research under
MURI Grant FA9550-09-1-0643, and in part by the Office of Naval Research under Grant N00014-12-1-0767.}%
\thanks{Y. Zhao is with the Dept. of Electrical and Computer Engineering, Stony Brook University, Stony Brook, NY, 11794 USA (e-mail: yue.zhao.2@stonybrook.edu).}%
\thanks{A. Goldsmith is with the Dept. of Electrical Engineering, Stanford University, Stanford, CA 94305 USA (e-mail: andrea@ee.stanford.edu).}%
\thanks{H. V. Poor is with the Dept. of Electrical Engineering, Princeton University, Princeton, NJ 08544 USA (e-mail: poor@princeton.edu).}%
}

%
\maketitle

\begin{abstract}
Physical security of power networks under power injection attacks that alter generation and loads is studied. The system operator employs Phasor Measurement Units (PMUs) for detecting such attacks, while attackers devise attacks that are \emph{unobservable} by such PMU networks. 
It is shown that, given the PMU locations, the solution to finding the sparsest unobservable attacks has a simple form with probability one, namely, $\kappa(\mc{G^M})+1$, where $\kappa(\mc{G^M})$ is defined as the 
vulnerable vertex connectivity of an augmented graph. The constructive proof allows one to find the entire set of the sparsest unobservable attacks in polynomial time. Furthermore, 
a notion of the potential impact of unobservable attacks is introduced. 
With optimized PMU deployment, 
the sparsest unobservable attacks and their potential impact as functions of the number of PMUs are evaluated numerically for the IEEE 30, 57, 118 and 300-bus systems and the Polish 2383, 2737 and 3012-bus systems. It is observed that, as more PMUs are added, the maximum potential impact among all the sparsest unobservable attacks drops quickly until it reaches the minimum sparsity.
\end{abstract}

\section{Introduction}
Modern power networks are increasingly dependent on information technology in order to achieve higher efficiency, flexibility and adaptability \cite{GuestEd}. The development of more advanced sensing, communications and control capabilities for
power grids enables better situational awareness and smarter control. However, security issues also arise as more complex information systems become prominent targets of cyber-physical attacks: not only can there be data attacks on measurements that disrupt situation awareness \cite{Liusec}, but also control signals of 
power grid components including generation and loads can be hijacked, leading to immediate physical misbehavior of power systems \cite{Leonload}. Furthermore, in addition to 
hacking control messages, a powerful attacker can also implement physical attacks by directly intruding upon power grid components. Therefore, to achieve reliable and secure operation of a smart power grid, it is essential for the system operator to minimize (if not eliminate) the feasibility and impact 
of physical attacks.

There are many closely related techniques that can help achieve secure power systems. Firstly, coding and encryption can better secure control messages and communication links \cite{Liangsec}, and hence raise the level of difficulty of cyber 
attacks. 
To prevent physical attacks, grid hardening is another design choice \cite{harden}. However, grid hardening can be very costly, and hence may only apply to a small fraction of the components in large power systems. Secondly, power systems are subject to many kinds of faults and outages \cite{kezunovic2011smart, ZGP12, ZSRGP13}, which are in a sense \emph{unintentional} physical attacks. As such outages are not inflicted by attackers, they are typically modeled as random events, and detecting outages is often modeled as a hypothesis testing problem \cite{Tajergrid}. However, this event and detection model is not necessarily accurate for \emph{intentional} physical attacks, which are the focus of this paper. Indeed, an intelligent attacker would often like to strategically \emph{optimize} its attack, such that it is not only hard to detect, but also the most viable to implement (e.g., with low execution complexity as well as high impact). 

Recently, there has been considerable research concerning data injection attacks on sensor measurements 
from supervisory control and data acquisition (SCADA) systems. A common and important goal among these works is to pursue the integrity of network \emph{state estimation}, that is, to successfully detect the injected data attack and recover the correct system states. The feasibility of constructing 
data injection attacks to pass bad data detection schemes and alter estimated system states was first shown in \cite{Liusec}. There, a natural question arises as to how to find
the \emph{sparsest unobservable} data injection attack, as sparsity is used to model the complexity of an attack, as well as the resources needed for an attacker to implement it. However, finding such an \emph{optimal attack} requires solving an NP-hard $l_0$ minimization problem. 
While efficiently finding the sparsest unobservable attacks in general remains an open problem, interesting and exact solutions under some special 
problem settings have been developed in \cite{Kosut11} \cite{Souexact} \cite{Hexact}. 
Another important aspect of a data injection attack is its impact on the power system. As state estimates are used to guide system and market operation of the grid, several interesting studies have investigated the impact of data attacks on optimal power flow recommendation \cite{teixeira12} and location marginal prices in a deregulated power market \cite{Liyan14} \cite{Jinsub}.
Furthermore, as Phasor Measurement Units (PMUs) become increasingly deployed in power systems, network situational awareness for grid operators is significantly improved compared to using legacy SCADA systems only. However, while PMUs provide accurate 
and secure sampling of the system states, their high installation costs prohibit ubiquitous deployment. Thus, the problem of how to economically deploy PMUs such that the state estimator can best detect data injection attacks is an interesting problem that many studies have addressed (see, e.g. \cite{Giani11, Tungattack, bobba10} among others.)

Compared to data attacks that target state estimators, physical attacks that directly disrupt power network physical processes can have a much faster 
impact on power grids. In addition to physical attacks by hacking control signals or directly intruding upon grid components, several types of load altering attacks have been shown to be practically implementable via Internet-based message attacks \cite{Leonload}. Topological attacks are another type of physical attack which have been considered in \cite{Kartopo}. 
Dynamic power injection attacks have also been analyzed in several studies. For example, in \cite{Florian}, conditions for the existence of undetectable and unidentifiable attacks were provided, and the sizes of the sets of such attacks were shown to be bounded by graph-theoretic quantities. 
Alternatively, in \cite{TabuadaJ} and \cite{TabuadaC}, state estimation is considered in the presence of both power injection attacks and data attacks. Specifically, in these works, the maximum number of attacked nodes that still results in 
correct estimation was characterized, and effective heuristics for state recovery under sparse attacks were provided.

In this paper, we investigate a specific type of physical attack in power systems called \emph{power injection attacks}, that alter generation and loads in the network. 
A linearized power network model - the DC power flow model - is employed for simplifying the analysis of the problem and obtaining a simple solution that yields considerable insight.  
We consider a grid operator that employs PMUs to (partially) monitor the network for detecting power injection attacks. Since power injection attacks disrupt the power system states immediately, the timeliness of PMU measurement feedback is essential. 
Furthermore, our model allows for the  power injections at some buses to be ``unalterable''. This captures the cases of ``zero injection buses'' with no generation and load, and buses that are protected by the system operator. 
Under this model we study the open $l_0$ minimization problem of finding the sparsest unobservable attacks given any set of PMU locations. We 
start with a feasibility problem for unobservable attacks. We prove that the existence of an unobservable power injection attack restricted to any given set of buses can be determined with probability one by computing a quantity called the structural rank. Next,
we prove that the NP-hard problem of finding the sparsest unobservable attacks has a simple solution with probability one. Specifically, the sparsity of the optimal solution is $\kappa(\mc{G^M})+1$, where $\kappa(\mc{G^M})$ is the ``vulnerable vertex connectivity'' that we define for an augmented graph of the original power network. Meanwhile, the entire set of globally optimal solutions (there can be many of them) is found in polynomial time. We further introduce a notion of 
potential 
impacts of unobservable attacks. Accordingly, among all the sparsest unobservable attacks, an attacker can easily find the one with the greatest potential impact. 
Finally, given optimized PMU placement, 
we evaluate the sparsest unobservable attacks in terms of their sparsity and potential impact in the IEEE 30, 57, 118 and 300-bus, and the Polish 2383, 2737 and 3012-bus systems.

The remainder of the paper is organized as follows. In Section \ref{secform}, models of the power network, power injection attacks, PMUs and unalterable buses are established. In addition, the minimum sparsity problem of unobservable attacks is formulated. In Section \ref{secfeas} we provide the feasibility condition for unobservable attacks restricted to any subset of the buses. In Section \ref{secmin} we prove that the minimum sparsity of unobservable attacks can be found in polynomial time with probability one. 
In Section \ref{secnum}, a 
PMU placement algorithm for countering power injection attacks is developed, and numerical evaluation of the sparsest unobservable attacks in IEEE benchmark test cases and large-scale Polish power systems are provided. Conclusions are drawn in Section \ref{secconc}.

\section{Problem formulation} \label{secform}
\subsection{Power network model}
We consider a power network with $N$ buses, and denote the set of buses and the set of transmission lines by $\mc{N} = \{1,2,\ldots,N\}$ and $\mc{L} = \{1,2,\ldots,L\}$ respectively. For a line $\l\in\mc{L}$ that connects buses $n$ and $m$, denote its reactance by $x_l$ as well as $x_{nm}$, and define its \emph{incidence vector} $\bm{m}_l$ as follows:
\begin{align}
\bm{m}_l(i) = \left\{
\begin{array}{lll}
1, &\text{if } i=n, \\
-1, &\text{if } i=m, \\
0, &\text{otherwise}.
\end{array}
\right. \nn
\end{align}
Based on the power network topology and line reactances, we construct a weighted graph $\mc{G} = \{\mc{N}, \mc{L}, \bm{w}\}$ where the edge weight $w_l\triangleq \frac{1}{x_l}, \forall l = 1,\ldots,L$. 
The power system is generally modeled by nonlinear AC power flow equations \cite{WW96}. In this paper, a linearized model - the DC power flow model - is employed as an approximation of the AC model, which allows us to find a simple closed-form solution to the problem from which we glean significant insights. 
Under the DC model, 
the real power injections $\bm{P}\in\mbb{R}^N$ and the voltage phase angles $\bm{\theta}\in\mbb{R}^N$ satisfy $\bm{P} = \bm{B}\bm{\theta}$, 
where $\bm{B} = \sum_{l=1}^L \frac{1}{x_l}\bm{m}_l\bm{m}_l^T \in\mbb{R}^{N\times N}$
is the \emph{Laplacian} of the weighted graph $\mc{G}$.
We assume that $x_l$ is positive which is typically true for transmission lines (cf. Chapter 4 of \cite{WW96}).
Furthermore, the power flow on line $l$ from bus $n$ to bus $m$ equals $P_{nm} = \frac{1}{x_{nm}}(\theta_n - \theta_m)$. 

We consider 
attackers inflicting power injection attacks that alter the generation and loads in the power network. We denote the power injections in normal conditions by $\bm{P}$, and denote a power injection attack by $\Delta \bm{P}\in\mbb{R}^N$. Thus the post-attack power injections are $\bm{P}+ \Delta \bm{P}$. 



\subsection{Sensor model and unobservable attacks} \label{secsensemod}
We consider the use of PMUs by the system operator for monitoring the power network in order to detect power injection attacks. With PMUs installed at the buses, we consider the following two different sensor models:
\begin{enumerate}
\item A PMU securely measures the voltage phasor of the bus at which it is installed.\footnote{The voltage phase angles at all the buses are defined to be relative to a common reference --- the phase angle at the angle 
reference bus in the network.}
\item A PMU securely measures the voltage phasor of the bus at which it is installed, as well as the current phasors on all the lines connected to this bus\footnote{In practice, the second PMU measurement model is achieved by installing PMUs on all the lines connected to a bus.}.
\end{enumerate}
We denote the set of buses with PMUs by $\mc{M}~(\subseteq\mc{N})$, and let $M\triangleq|\mc{M}|$ be the total number of PMUs, where $|\cdot|$ denotes the cardinality of a set. Without loss of generality (WLOG), we choose one of the buses in $\mc{M}$ to be the angle reference bus. 
We say that a power injection attack $\Delta\bm{P}$ is \emph{unobservable} if it leads to \emph{zero} changes in all the quantities measured by the PMUs. With the first PMU model described above, we have the following definition: 
\begin{define}[Unobservability condition]
An 
attack $\Delta \bm{P} \ne 0$ is unobservable if and only if
\begin{align}
\exists \Delta\bm{\theta}, ~\text{such that}~ \Delta\bm{P} = \bm{B}\Delta\bm{\theta} \text{ and } \Delta\bm{\theta}_{\mc{M}}=\bm{0}, \label{unobcon}
\end{align}
where $\Delta\bm{\theta}_\mc{\mc{M}}$ denotes the $M\times1$ sub-vector of $\Delta\bm{\theta}$ obtained by keeping its $M$ entries whose indices are in $\mc{M}$.\footnote{Since $\bm{B}$ is a weighted Laplacian matrix, the elements of $\Delta \bm{P}$ sum to $0$.}
\end{define}
With the second PMU model described above, for any bus $n\in\mc{N}$, it is immediate to verify that the following three conditions are equivalent:
\begin{enumerate}
\item There are no changes of the voltage phasor at $n$ and of the current phasors on all the lines connected to $n$.
\item There are no changes of the voltage phasor at $n$ and of the power flows on all the lines connected to $n$.
\item $\forall n'\in N[n]$, there is no change of the voltage phasor at $n'$, where $N[n]$ is the closed neighborhood of $n$ which includes $n$ and its neighboring buses $N(n)$.
\end{enumerate}
Thus, for forming unobservable attacks, the following two situations are equivalent to the attacker:
\begin{itemize}
\item The system operator monitors the set of buses $\mc{M}$ with the second PMU model;
\item The system operator monitors the set of buses $N[\mc{M}]$ with the first PMU model,
\end{itemize}
where $N[\mc{M}]$ is the closed neighborhood of $\mc{M}$ which includes all the buses in $\mc{M}$ and their neighboring buses $N(\mc{M})$.
Thus, the unobservability condition with the second PMU model is obtained by replacing $\mc{M}$ with $N[\mc{M}]$ in \eqref{unobcon}. 
WLOG, we employ the first PMU model in the following analysis, and based on the discussion above all the results can be directly translated to the second PMU model. 


\subsection{Sparsest unobservable attacks} \label{secmM}
In forming an unobservable attack, an attacker generally has two objectives: minimize execution complexity and maximize its impact on the grid. Note that these two objectives can be competing interests that are not simultaneously achievable. We will first focus on finding the minimum execution complexity for an attack to be unobservable, which constitutes the main part of this work. Among attacks with the minimum complexity, we then find the one with the maximum impact.

For an attack vector $\Delta\bm{P}$, we use its zero norm $\Vert\Delta\bm{P}\Vert_0$ to model its execution complexity. This is because attackers are typically resource-constrained, and can choose only a limited number of buses to implement attacks. For minimizing attack complexity, an attacker is interested in finding the sparsest attacks that satisfy the unobservability condition \eqref{unobcon}: 
\begin{align}
    \min_{\Delta\bm{\theta}}~ &\Vert\Delta\bm{P}\Vert_0 \label{l0min}\\
    s.t.~& \Delta\bm{P} = \bm{B\Delta\theta}, ~\Delta\bm{\theta}_{\mc{M}} = \bm{0}, ~\Delta\bm{\theta} \ne \bm{0}. \nn
\end{align}
Since $\Delta\bm{\theta}_{\mc{M}} = \bm{0}, ~\Delta\bm{\theta} \ne \bm{0} \Rightarrow \bm{B\Delta\theta} = \bm{B}_{\mc{N}\mc{M}^c}\Delta\bm{\theta}_{\mc{M}^c}, \Delta\bm{\theta}_{\mc{M}^c}\ne \bm{0}$, 
a more compact form of \eqref{l0min} is as follows:
\begin{align}
    \eqref{l0min} ~\Leftrightarrow~ &\min_{\Delta\bm{\theta}_{\mc{M}^c}\ne \bm{0}}~ \Vert\bm{B}_{\mc{N}\mc{M}^c}\Delta\bm{\theta}_{\mc{M}^c}\Vert_0, \label{l0min_2}
\end{align}
where $\mc{M}^c = \mc{N}\backslash\mc{M}$ denotes the complement of $M$, and $\bm{B}_{\mc{N}\mc{M}^c}$ is the submatrix of $\bm{B}$ formed by choosing all its rows and a 
set of columns $\mc{M}^c$.

We now note that problem \eqref{l0min_2} is NP-hard: Specifically, 
as a special case of the cospark problem of a matrix \cite{CTLP} problem \eqref{l0min_2} resembles a 
security index problem discussed in \cite{Hexact}, which has been proven to be NP-hard. 
Under some special problem settings for data injection attacks, problems of this type have been shown to be solvable exactly in polynomial time \cite{Kosut11} \cite{Souexact} \cite{Hexact}. In general, low complexity heuristics have been developed for solving $l_0$ minimization problems (e.g., $l_1$ relaxation). 

We now generalize our model to allow a subset of buses to be ``unalterable buses'', meaning that their nodal power injection cannot be changed by attackers. This allows us to model the following scenarios:
\begin{itemize}
\item A ``zero injection'' bus that simply connects multiple lines without nodal generation or load, and hence its power injection is always zero and cannot be changed. 
\item A ``protected'' bus by the system operator, and its power injection is not accessible by the attacker. 
\end{itemize} 
We denote the set of unalterable buses by $\mc{U}$. 
The other buses $\mc{U}^c$ are termed ``alterable'' buses. 
Generalizing \eqref{l0min}, the sparsest unobservable attack problem is established as follows: 
\begin{align}
    \min_{\Delta\bm{\theta}}~ &\Vert\Delta\bm{P}\Vert_0 \label{l0min_g}\\
    s.t.~& \Delta\bm{P} = \bm{B\Delta\theta}, ~\Delta\bm{\theta}_{\mc{M}} = \bm{0}, ~\Delta\bm{P}_{\mc{U}} = \bm{0}, ~\Delta\bm{\theta} \ne \bm{0}. \nn
\end{align}
When $\mc{U} = \emptyset$, \eqref{l0min_g} reduces to \eqref{l0min}. Generalizing \eqref{l0min_2}, Eq. \eqref{l0min_g} has the following equivalent form: 
\begin{align}
    \eqref{l0min_g} ~\Leftrightarrow~ &\min_{\substack{\Delta\bm{\theta}_{\mc{M}^c}\ne \bm{0}, \\\left(\bm{B}_{\mc{N}\mc{M}^c}\Delta\bm{\theta}_{\mc{M}^c}\right)_{\mc{U}}=0}}~ \Vert\bm{B}_{\mc{N}\mc{M}^c}\Delta\bm{\theta}_{\mc{M}^c}\Vert_0. \label{l0min_g2}
\end{align}

\subsection{Graph augmentation} \label{sec:aug}
\emph{Given the locations of the sensors} $\mc{M}$, we now introduce a variation of the graph $\mc{G}$ that will prove key to developing the main results later. 

\begin{define} \label{gmdef}
Given a set of buses $\mc{M}\subseteq\mc{N}$, $\mc{G^M}$ is defined to be the following augmented graph based on $\mc{G}$: 
\begin{enumerate}
\item $\mc{G^M}$ includes all the buses in $\mc{G}$, and has one additional unalterable dummy bus. 
\item Define an augmented set $\bar{\mc{M}}$ that contains $\mc{M}$ and the unalterable dummy bus. 
\item $\mc{G^M}$ includes all the edges of $\mc{G}$, and an edge is added between every pair of buses in $\bar{\mc{M}}$, 
and its weight can be chosen arbitrarily as any positive number. 
\end{enumerate}
\end{define}
We note that the dummy bus is only connected to the set of sensors $\mc{M}$. 
We observe the following key facts. First, an unobservable attack in the original graph $\mc{G}$ leads to zero changes in all the voltage phase angles in $\mc{M}$. Thus, any line between a pair of buses in $\mc{M}$ would see a zero change of the power flow on it. It is then clear that 
the added dummy bus and lines in $\mc{G^M}$ do not lead to any power flow changes in the network under any unobservable attack. 
We thus have the following lemma:
\begin{lem} \label{lemaug}
An attack is unobservable by $\mc{M}$ in $\mc{G}$ if and only if it is unobservable by $\mc{M}$ 
in $\mc{G^M}$. 
\end{lem}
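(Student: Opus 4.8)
The plan is to translate unobservability, which is an algebraic condition on phase-angle vectors, into statements about the two weighted Laplacians, and then exploit the fact that every \emph{added} edge of $\mc{G^M}$ is incident only to vertices of $\bar{\mc{M}}$. First I would fix notation: let $\bar{\bm{B}}$ be the weighted Laplacian of $\mc{G^M}$ on the $N+1$ vertices (the $N$ original buses plus the dummy bus $d$), and write it in block form relative to $\bm{B}$. Grouping the edges of $\mc{G^M}$ into the original edges of $\mc{G}$, the edges added among the buses of $\mc{M}$, and the edges joining each bus of $\mc{M}$ to $d$, one gets
\begin{align}
\bar{\bm{B}} = \begin{bmatrix} \bm{B} + \bm{L}_{\mc{M}} + \diag(\bm{w}) & -\bm{w} \\ -\bm{w}^T & \mathbf{1}^T\bm{w} \end{bmatrix}, \nn
\end{align}
where $\bm{L}_{\mc{M}}$ is the Laplacian contributed by the edges added inside $\mc{M}$ (supported on the index set $\mc{M}\times\mc{M}$) and $\bm{w}\in\mbb{R}^N$ collects the weights of the dummy edges (supported on $\mc{M}$). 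The structural fact I would use repeatedly is that $\bm{L}_{\mc{M}}$, $\diag(\bm{w})$ and $\bm{w}$ all live on coordinates indexed by $\mc{M}$, so they annihilate, or are annihilated by, any phase-angle vector that vanishes on $\mc{M}$.

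For the forward implication, suppose $\Delta\bm{P}$ is unobservable in $\mc{G}$ with witness $\Delta\bm{\theta}$ satisfying $\bm{B}\Delta\bm{\theta} = \Delta\bm{P}$ and $\Delta\bm{\theta}_{\mc{M}} = \bm{0}$. I would extend it to $\Delta\bar{\bm{\theta}} = [\Delta\bm{\theta}^T,\,0]^T$ by assigning zero phase change to $d$. Because $\Delta\bm{\theta}_{\mc{M}} = \bm{0}$, the terms $\bm{L}_{\mc{M}}\Delta\bm{\theta}$, $\diag(\bm{w})\Delta\bm{\theta}$ and $\bm{w}^T\Delta\bm{\theta}$ all vanish, and a direct block multiplication yields $\bar{\bm{B}}\,\Delta\bar{\bm{\theta}} = [\Delta\bm{P}^T,\,0]^T$ with $\Delta\bar{\bm{\theta}}_{\mc{M}} = \bm{0}$; this exhibits the same attack, with no injection change at the unalterable dummy, as unobservable in $\mc{G^M}$.

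For the reverse implication, I would start from a witness $\Delta\bar{\bm{\theta}} = [\Delta\bm{\theta}^T,\,\phi]^T$ with $\Delta\bar{\bm{\theta}}_{\mc{M}} = \bm{0}$ realizing the attack in $\mc{G^M}$, where the dummy-bus entry of the injection change is $0$ since $d$ is unalterable. The key step, and the one I expect to carry the real content of the lemma, is reading off the power-balance equation at $d$: its last row gives $-\bm{w}^T\Delta\bm{\theta} + (\mathbf{1}^T\bm{w})\phi = 0$, and since $\bm{w}$ is supported on $\mc{M}$ where $\Delta\bm{\theta}$ vanishes, this forces $(\mathbf{1}^T\bm{w})\phi = 0$, hence $\phi = 0$ whenever $\mc{M}\ne\emptyset$ (so that $\mathbf{1}^T\bm{w}>0$). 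With $\phi = 0$ the entire added structure again drops out of the top block, leaving $\bm{B}\Delta\bm{\theta} = \Delta\bm{P}$ with $\Delta\bm{\theta}_{\mc{M}} = \bm{0}$, i.e., unobservability in $\mc{G}$. The condition $\Delta\bm{\theta}\ne\bm{0}$ (equivalently $\Delta\bm{P}\ne\bm{0}$) is preserved in both directions because the two witnesses differ only in the dummy coordinate, which is forced to zero. The main obstacle is thus conceptual rather than computational: pinning down that the dummy's phase must match the frozen (zero) phase of its only neighbors, which is exactly the point where the unalterability of the dummy injection is indispensable; everything else reduces to the routine observation that edges supported on $\mc{M}$ cannot carry a power-flow change when the angles on $\mc{M}$ are held fixed.
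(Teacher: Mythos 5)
Your proof is correct and rests on the same key observation as the paper's: every added edge of $\mc{G^M}$ is incident only to vertices of $\bar{\mc{M}}$, where the phase-angle changes are frozen at zero, so the added structure carries no power-flow change; the paper states this informally and declares the lemma clear, whereas you carry out the block-Laplacian algebra explicitly. In particular, your reverse direction --- using the power balance at the unalterable dummy bus to force its phase change $\phi$ to zero before stripping away the added terms --- makes precise a step the paper leaves implicit, and correctly identifies the unalterability of the dummy as the point where that direction could otherwise fail.
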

This allows us to work with the augmented graph $\mc{G^M}$ instead of $\mc{G}$. It is clear that the weights of the added edges in $\mc{G^M}$ do not matter for Lemma \ref{lemaug} to hold. 



\section{Feasibility condition of unobservable attacks} \label{secfeas}
In this section, we address the following question whose solutions will be useful in solving the minimum sparsity problem \eqref{l0min_g2}: Assuming that the attacker can only alter the power injections at a subset of the buses, denoted by $\mc{A}\subseteq\mc{U}^c$, does there \emph{exist} an attack that is unobservable by a set of PMUs $\mc{M}$? For any given $\mc{A}$, a feasible non-zero attack $\Delta\bm{P}~(\ne\bm{0})$ must satisfy $\Delta\bm{P}_{\mc{A}^c} = \bm{0}$. 
In other words, it must not alter the power injections at the buses in $\mc{A}^c$. 

From \eqref{unobcon}, there exists an unobservable non-zero attack if and only if
\begin{align}
&\exists \Delta \bm{P}, \Delta\bm{\theta}\ne\bm{0},~s.t.\nn\\
&~~~~\Delta\bm{P} = \bm{B}\Delta\bm{\theta}, ~\Delta\bm{P}_{\mc{A}^c} = \bm{0}, ~\Delta\bm{\theta}_{\mc{M}}=\bm{0}.  \label{unobconprf1}
\end{align}
Since $\left\{
\begin{array}{lll}
\Delta\bm{\theta}_{\mc{M}}=\bm{0} \\
\Delta\bm{\theta}\ne\bm{0}
\end{array}
\right. \Rightarrow \Delta\bm{\theta}_{\mc{M}^c}\ne\bm{0},\Delta \bm{P}\ne\bm{0}$, we have that \eqref{unobconprf1} is equivalent to
\begin{align}
\exists \Delta\bm{\theta}_{\mc{M}^c}\ne\bm{0}, ~s.t.~(\Delta\bm{P}_{\mc{A}^c} = )~\bm{B}_{\mc{A}^c\mc{M}^c}\Delta\bm{\theta}_{\mc{M}^c} = \bm{0}, \label{unobconprf2}
\end{align}
where $\bm{B}_{\mc{A}^c\mc{M}^c}$ is the submatrix of $\bm{B}$ formed by its rows $\mc{A}^c$ and columns $\mc{M}^c$. 
An illustration of \eqref{unobconprf2} is depicted in Figure \ref{feasfig}, where the submatrix formed by the shaded blocks represents $\bm{B}_{\mc{A}^c\mc{M}^c}$. 
From \eqref{unobconprf2}, we have the following lemma on the feasibility condition of unobservable attacks.
\begin{lem} \label{lemfeas1}
Given $\mc{A}$ and $\mc{M}$, there exists an unobservable non-zero attack if and only if $\bm{B}_{\mc{A}^c\mc{M}^c}$ is column rank deficient.
\end{lem}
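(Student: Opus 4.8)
The plan is to recognize that Lemma \ref{lemfeas1} is essentially a translation of the already-derived feasibility condition \eqref{unobconprf2} into the language of matrix rank. The real analytical content—reducing ``there exists an unobservable non-zero attack'' to ``there is a nonzero $\Delta\bm{\theta}_{\mc{M}^c}$ in the kernel of $\bm{B}_{\mc{A}^c\mc{M}^c}$''—has been completed in passing from \eqref{unobconprf1} to \eqref{unobconprf2}. So the remaining task is simply to identify the existence of a nontrivial kernel with column rank deficiency.

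First I would fix the terminology: $\bm{B}_{\mc{A}^c\mc{M}^c}$ is \emph{column rank deficient} precisely when $\rank(\bm{B}_{\mc{A}^c\mc{M}^c}) < |\mc{M}^c|$, i.e.\ when its columns (indexed by $\mc{M}^c$) are linearly dependent. By the rank--nullity relation, this is equivalent to the kernel of $\bm{B}_{\mc{A}^c\mc{M}^c}$ being nontrivial, meaning there exists $\Delta\bm{\theta}_{\mc{M}^c}\ne\bm{0}$ with $\bm{B}_{\mc{A}^c\mc{M}^c}\Delta\bm{\theta}_{\mc{M}^c}=\bm{0}$, which is exactly the condition appearing in \eqref{unobconprf2}.

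With this identification, both directions are immediate. If an unobservable non-zero attack exists, then \eqref{unobconprf2} furnishes a nonzero kernel vector $\Delta\bm{\theta}_{\mc{M}^c}$, so the columns of $\bm{B}_{\mc{A}^c\mc{M}^c}$ are dependent and the matrix is column rank deficient. Conversely, if $\bm{B}_{\mc{A}^c\mc{M}^c}$ is column rank deficient, its nontrivial kernel supplies some $\Delta\bm{\theta}_{\mc{M}^c}\ne\bm{0}$ satisfying \eqref{unobconprf2}; padding this with zeros on $\mc{M}$ recovers a full $\Delta\bm{\theta}$ with $\Delta\bm{\theta}_{\mc{M}}=\bm{0}$ and $\Delta\bm{\theta}\ne\bm{0}$, and the associated $\Delta\bm{P}=\bm{B}\Delta\bm{\theta}$ is an unobservable non-zero attack supported on $\mc{A}$, establishing \eqref{unobconprf1}.

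I do not expect any genuinely hard step here, since the substantive reduction was already carried out above. The only point meriting care is index-set bookkeeping—specifically, confirming that a nonzero $\Delta\bm{\theta}_{\mc{M}^c}$ yields a \emph{genuinely nonzero} attack $\Delta\bm{P}$. This is already guaranteed by the implication noted just before \eqref{unobconprf2}, namely that $\Delta\bm{\theta}_{\mc{M}}=\bm{0}$ together with $\Delta\bm{\theta}\ne\bm{0}$ forces $\Delta\bm{\theta}_{\mc{M}^c}\ne\bm{0}$ and $\Delta\bm{P}\ne\bm{0}$ (the latter because such a $\Delta\bm{\theta}$ cannot be a constant vector and hence lies outside the kernel of the Laplacian $\bm{B}$). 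I would simply cite that observation to close the equivalence cleanly.
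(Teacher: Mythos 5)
Your proposal is correct and follows essentially the same route as the paper: the paper derives the lemma directly from the equivalence chain \eqref{unobconprf1}$\Leftrightarrow$\eqref{unobconprf2} and the identification of a nontrivial kernel with column rank deficiency, exactly as you do. Your extra care in verifying that a nonzero $\Delta\bm{\theta}_{\mc{M}^c}$ yields a genuinely nonzero $\Delta\bm{P}$ (via the Laplacian kernel being the constant vectors on a connected graph) matches the implication the paper asserts just before \eqref{unobconprf2}.
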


\begin{figure}[tb]
  \centering
  \includegraphics[scale = 0.75]{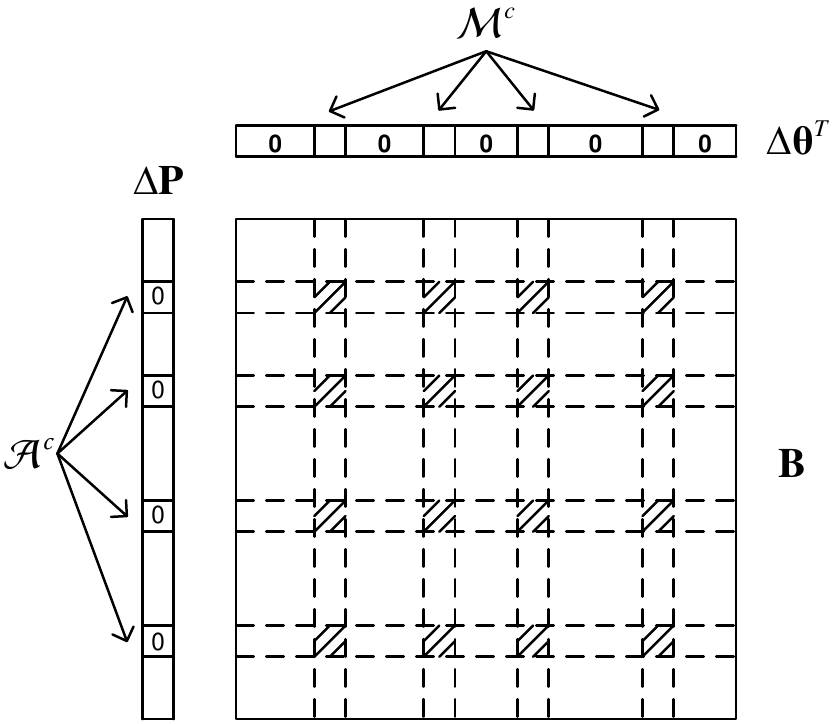}
  \caption{An illustration of \eqref{unobconprf2} where the submatrix formed by the shaded blocks represents $\bm{B}_{\mc{A}^c\mc{M}^c}$.}
  \label{feasfig}
\end{figure}


To analyze when this column rank deficiency condition, $\rank\left(\bm{B}_{\mc{A}^c\mc{M}^c}\right) < |\mc{M}^c|$, is satisfied, we start with the following observations based on the fact that $\bm{B}$ is the Laplacian of the weighted graph $\mc{G}$.
\begin{enumerate}
\item The \emph{signs} ($+1$, $-1$, or $0$) of the entries of $\bm{B}$ are fully determined by the \emph{network topology}:
    \begin{align}
    \bm{B}_{ij} > 0, &\text{ if } i=j, \nn\\
    \bm{B}_{ij} < 0, &\text{ if node (bus) $i$ and node $j~(i\ne j)$} \nn\\ 
    \text{are}~&\text{connected by an edge (transmission line)}, \nn\\
    \bm{B}_{ij} = 0, &\text{ if node (bus) $i$ and node $j~(i\ne j)$ are} \nn\\ 
    \text{not}~&\text{connected}.\nn
    \end{align}
\item The \emph{values} of the non-zero entries of $\bm{B}$ are determined by the \emph{line reactances} $\{x_{ij}\}$:
\begin{align}
\bm{B}_{ii} &= \sum_{j\ne i} w_{ij} = \sum_{j\ne i} \frac{1}{x_{ij}}, \nn\\
\bm{B}_{ij} &= -w_{ij} = -\frac{1}{x_{ij}}, \text{ if } i\ne j \text{ and } \bm{B}_{ij}\ne0. \nn
\end{align}
\end{enumerate}
When all the line reactances in the power network are known, so are the entries of the submatrix $\bm{B}_{\mc{A}^c\mc{M}^c}$, 
and it is immediate to compute whether $\rank\left(\bm{B}_{\mc{A}^c\mc{M}^c}\right)<|\mc{M}^c|$. \emph{Without} knowing the exact values of any line reactances, 
we will show 
that whether $\rank\left(\bm{B}_{\mc{A}^c\mc{M}^c}\right)<|\mc{M}^c|$ can be determined almost surely by computing the \emph{structural rank} of $\bm{B}_{\mc{A}^c\mc{M}^c}$, defined as follows \cite{sprankbook}.

\begin{define}[Set of independent entries]\label{indent}
A set of independent entries of a matrix $\bm{H}$ is a set of nonzero entries, no two of which lie on the same line (row or column).
\end{define}
\begin{define}[Structural rank]
The structural rank of a matrix $\bm{H}$, denoted by $\spr(\bm{H})$, is the maximum number of elements contained in at least one set of independent entries.
\end{define}

A basic relation between the structural rank and the rank of a matrix is the following \cite{sprankbook},
\begin{align}
\spr(\bm{B}_{\mc{A}^c\mc{M}^c}) \ge \rank(\bm{B}_{\mc{A}^c\mc{M}^c}).
\end{align}
In the literature, structural rank is also termed ``generic rank'' \cite{grank}.

Specifically, we consider \emph{generic} power grid parameters, i.e., we assume that the line reactances $x_l~(l=1,2,\ldots,L)$ are independent, but not necessarily identical random variables drawn from continuous probability distributions. We assume that the reactances are bounded away from zero from below
(as lines do not have zero reactances in practice). As such, the analysis in this work is along the line of \emph{structural properties} as in \cite{sprankbook} and \cite{grank}, and we will develop results that hold \emph{with probability one}. We believe the independence (but not identically distributed) assumption is sufficiently general in practice. In particular, there are uncertainties in factors that influence the reactance of a line (e.g. the distance that a line travels, the degradation of a line over time). These uncertainties can be modeled as independent (but not identically distributed) random variables, leading to the model employed in this paper. 

Clearly, $\bm{B}_{\mc{A}^c\mc{M}^c}$ is always column rank deficient when $|\mc{A}^c| < |\mc{M}^c|$. Next, we discuss the case of $|\mc{A}^c| \ge |\mc{M}^c|$. 
We begin with the special case $\mc{A} = \mc{M}$, for which we have the following lemma whose proof is relegated to Appendix \ref{secprflemL}:
\begin{lem} \label{lemL}
Let $\bm{B}\in\mbb{R}^{N\times N}$ be the Laplacian of a connected 
graph $\mc{G}$ with strictly positive edge weights. For any set of node indices $\mc{I}\subset\{1,2,\ldots,N\}$, denote by $\bm{B}_\mc{II}$ the submatrix of $\bm{B}$ formed by its rows $\mc{I}$ and columns $\mc{I}$. Then $\forall \mc{I}, |\mc{I}| \le N-1$, $\bm{B}_\mc{II}$  is of full rank. 
\end{lem}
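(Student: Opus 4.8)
The plan is to exploit the positive-semidefinite structure of the weighted Laplacian rather than to reason about ranks directly. Recall that $\bm{B} = \sum_{l=1}^L w_l \bm{m}_l\bm{m}_l^T$ with $w_l = 1/x_l > 0$, so for any $\bm{y}\in\mbb{R}^N$ the associated quadratic form factors as $\bm{y}^T\bm{B}\bm{y} = \sum_{l=1}^L w_l(\bm{m}_l^T\bm{y})^2 = \sum_{(n,m)\in\mc{L}} w_{nm}(y_n-y_m)^2 \ge 0$. Thus $\bm{B}$ is positive semidefinite, and I will reduce full rank of $\bm{B}_{\mc{I}\mc{I}}$ to showing that this submatrix is in fact positive definite.

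The key step is to relate the quadratic form of the principal submatrix to that of $\bm{B}$ by zero-extension. Given any $\bm{x}\in\mbb{R}^{|\mc{I}|}$, I would let $\tilde{\bm{x}}\in\mbb{R}^N$ denote the vector agreeing with $\bm{x}$ on the coordinates indexed by $\mc{I}$ and equal to zero on $\mc{I}^c$. Since each entry of $\bm{B}$ lying outside the block $\mc{I}\times\mc{I}$ is multiplied by a zero coordinate of $\tilde{\bm{x}}$, a direct check gives $\bm{x}^T\bm{B}_{\mc{I}\mc{I}}\bm{x} = \tilde{\bm{x}}^T\bm{B}\tilde{\bm{x}} = \sum_{(n,m)\in\mc{L}} w_{nm}(\tilde{x}_n-\tilde{x}_m)^2 \ge 0$, which in particular re-confirms that $\bm{B}_{\mc{I}\mc{I}}$ is positive semidefinite.

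The crux is then a connectivity argument ruling out a nontrivial null vector. Suppose $\bm{x}^T\bm{B}_{\mc{I}\mc{I}}\bm{x}=0$. Because every weight $w_{nm}$ is strictly positive, each summand must vanish, forcing $\tilde{x}_n=\tilde{x}_m$ for every edge $(n,m)$. As $\mc{G}$ is connected, this equality propagates across the whole graph, so $\tilde{\bm{x}}$ is constant, i.e. $\tilde{\bm{x}}=c\bm{1}$ for some $c\in\mbb{R}$. The hypothesis $|\mc{I}|\le N-1$ ensures $\mc{I}^c\ne\emptyset$, so at least one coordinate of $\tilde{\bm{x}}$ is zero by construction; hence $c=0$, giving $\tilde{\bm{x}}=\bm{0}$ and therefore $\bm{x}=\bm{0}$.

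This shows the only null vector of $\bm{B}_{\mc{I}\mc{I}}$ is $\bm{0}$, so $\bm{B}_{\mc{I}\mc{I}}$ is positive definite and of full rank. I do not anticipate a serious difficulty here; the only points requiring care are the zero-extension identity relating $\bm{x}^T\bm{B}_{\mc{I}\mc{I}}\bm{x}$ to $\tilde{\bm{x}}^T\bm{B}\tilde{\bm{x}}$, and the use of connectivity to conclude that the null space of $\bm{B}$ is exactly $\mathrm{span}\{\bm{1}\}$. It is precisely this last fact, combined with the single deleted coordinate, that collapses the otherwise-surviving constant vector to zero, so connectivity and the bound $|\mc{I}|\le N-1$ are both genuinely needed.
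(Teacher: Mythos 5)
Your proof is correct. Both you and the paper ultimately establish that $\bm{B}_{\mc{I}\mc{I}}$ is positive definite, and both arguments hinge on the same two ingredients (connectivity of $\mc{G}$ and the fact that at least one index is deleted), but the mechanics differ. The paper decomposes $\bm{B}_{\mc{I}\mc{I}} = \bm{L}_{\mc{I}} + \bm{D}_{\mc{I}}$, where $\bm{L}_{\mc{I}}$ is the Laplacian of the induced subgraph $\mc{G}[\mc{I}]$ (block-diagonal over its connected components) and $\bm{D}_{\mc{I}}$ is a nonnegative diagonal matrix collecting the weights of edges leaving $\mc{I}$; connectivity of $\mc{G}$ guarantees each component contributes a strictly positive diagonal entry, which kills the per-component constant null vectors. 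You instead zero-extend $\bm{x}$ to $\tilde{\bm{x}}\in\mbb{R}^N$ and evaluate the full quadratic form $\tilde{\bm{x}}^T\bm{B}\tilde{\bm{x}} = \sum_{(n,m)\in\mc{L}} w_{nm}(\tilde{x}_n-\tilde{x}_m)^2$, letting connectivity propagate equality across the whole graph at once and the zero coordinate on $\mc{I}^c$ collapse the constant. The two computations are equivalent --- the boundary-edge terms $w_{nm}(x_n-0)^2$ in your sum are exactly the diagonal contributions $\bm{D}_{\mc{I}}$ in the paper --- but your version is more economical: it avoids the block-diagonal bookkeeping and the per-component case analysis entirely, at the (negligible) cost of verifying the zero-extension identity $\bm{x}^T\bm{B}_{\mc{I}\mc{I}}\bm{x} = \tilde{\bm{x}}^T\bm{B}\tilde{\bm{x}}$, which you do correctly.
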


Note that Lemma \ref{lemL} holds deterministically without assuming generic edge weights of the graph.
For the case of $\mc{A} = \mc{M}$, we let $\mc{I} = \mc{A}^c = \mc{M}^c$, and Lemma \ref{lemL} proves that $\rank\left(\bm{B}_{\mc{A}^c\mc{M}^c}\right) = |\mc{M}^c|$. This implies the intuitive fact that there exists no attack restricted to $\mc{A}$ that is unobservable by a set of PMUs $\mc{M}=\mc{A}$.

Now, we address the general case of arbitrary $\mc{A}$ and $\mc{M}$. 
We have the following theorem demonstrating that having $\spr(\bm{B}_{\mc{A}^c\mc{M}^c}) =  |\mc{M}^c|$ \emph{almost surely guarantees} $\rank(\bm{B}_{\mc{A}^c\mc{M}^c}) =  |\mc{M}^c|$. The proof is relegated to Appendix \ref{secprffeasthm}.
\begin{thm} \label{feasthm}
For a connected weighted graph $\mc{G} = \{\mc{N},\mc{L},\bm{w}\}$, assume that the edge weights are independent continuous random variables strictly bounded away from zero from below, and denote the Laplacian of $\mc{G}$ by $\bm{B}\in\mbb{R}^{N\times N}$. Then, any $N'\times N''$ submatrix of $\bm{B}$, with $\min(N', N'') \le N-1$, has a rank of $\min(N', N'')$ 
with probability one if it has a structural rank of $\min(N', N'')$.
\end{thm}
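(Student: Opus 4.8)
The plan is to convert the structural-rank hypothesis into an almost-sure statement about the true rank via a determinant-polynomial argument, the standard bridge being that a polynomial in the edge weights either vanishes identically or vanishes only on a measure-zero set. Since $\bm{B}$ is symmetric, a submatrix $\bm{B}_{\mc{R}\mc{C}}$ and its transpose $\bm{B}_{\mc{C}\mc{R}}$ share the same rank and the same structural rank, so I would assume without loss of generality that $N''=\min(N',N'')\le N-1$ and aim to prove full column rank. The hypothesis $\spr(\bm{B}_{\mc{R}\mc{C}})=N''$ supplies a set of $N''$ rows $\mc{R}'\subseteq\mc{R}$ for which the square submatrix $\bm{H}\triangleq\bm{B}_{\mc{R}'\mc{C}}$ admits a nonzero generalized diagonal, i.e.\ a permutation $\pi:\mc{R}'\to\mc{C}$ with $\bm{B}_{i,\pi(i)}\ne 0$ for every $i$. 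Then $\det\bm{H}$ is a polynomial $p$ in the edge weights $\{w_l\}$, and because these are independent and drawn from continuous distributions, $\Pr[p=0]=0$ as soon as $p\not\equiv 0$; since there are only finitely many submatrices, a union bound yields the conclusion for all of them simultaneously. Everything therefore reduces to the single algebraic assertion that $p\not\equiv 0$.

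The delicate point, and the reason the hypothesis $\min(N',N'')\le N-1$ cannot be dropped, is that the entries of $\bm{B}$ are \emph{not} independent parameters, so the classical equality of structural and generic rank for matrices with algebraically independent nonzero entries does not apply verbatim. Concretely, each off-diagonal entry is a single weight $-w_{ij}$, whereas each diagonal entry $\bm{B}_{ii}=\sum_{m\sim i}w_{im}$ is a \emph{sum} sharing its variables with the off-diagonal entries in the same row and column. This coupling is exactly what makes the full Laplacian ($N''=N$) a genuine exception: there $\spr(\bm{B})=N$ because every diagonal entry is nonzero, yet $\rank(\bm{B})=N-1$ because $\mathbf{1}$ spans the kernel, so $p\equiv 0$ despite full structural rank. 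The proof must thus show that this single global dependency is the \emph{only} obstruction and that it vanishes as soon as one vertex index is missing from $\mc{C}$.

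To establish $p\not\equiv 0$ I would exhibit one weight assignment at which $\det\bm{H}\ne 0$, using a leading-term (tropical) specialization: set $w_l=t^{a_l}$ with generically chosen exponents $a_l$ and examine $\det\bm{H}$ as $t\to\infty$. Each off-diagonal entry then has order $a_{ij}$ while each diagonal entry has order $\max_{m\sim i}a_{im}$, so the dominant power of $t$ is governed by a maximum-weight perfect matching in the bipartite support graph of $\bm{H}$; such a matching exists precisely because $\spr(\bm{H})=N''$. If the optimal matching is unique, no cancellation can occur in the leading coefficient and $\det\bm{H}\sim c\,t^{d}$ with $c\ne 0$, giving $p\not\equiv 0$. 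The main obstacle I anticipate is exactly the weight-sharing above: because the order of a diagonal entry is a maximum over incident edges rather than a free parameter, uniqueness of the optimal matching is not automatic, and ties among competing matchings are the algebraic shadow of the kernel vector $\mathbf{1}$. The crux of the argument is therefore to show that, whenever $|\mc{C}|\le N-1$, the omitted vertex index can be exploited to route each matched diagonal entry to a weight that is not forced into competition with an off-diagonal entry of the same row, breaking all such ties and thereby ruling out the degenerate cancellation that occurs only at the full size $N$.
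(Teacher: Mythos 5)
Your overall framing is reasonable: reducing to a square submatrix with a nonzero permuted diagonal, viewing its determinant as a polynomial $p$ in the edge weights, and invoking the fact that a nonzero polynomial vanishes with probability zero under an independent continuous weight model is a legitimate bridge, and you correctly isolate why the classical ``structural rank equals generic rank'' result does not apply verbatim: each diagonal entry $\bm{B}_{ii}=\sum_{m\sim i}w_{im}$ shares its variables with the off-diagonal entries of its row, and the full Laplacian shows this coupling can force $p\equiv 0$ despite full structural rank. The genuine gap is exactly where you place the ``crux'': you never establish that exponents $a_l$ can be chosen so that the maximum-order \emph{selection} (a permutation together with a choice of one summand from each selected diagonal entry) is unique. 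This is not a routine tie-breaking step. The dangerous ties are precisely those in which a permutation selecting the summand $w_{r(i)c(j)}$ inside a diagonal entry competes at the same order with the permutation that instead uses the off-diagonal entry $-w_{r(i)c(j)}$; these are exactly the cancellations that annihilate $\det\bm{B}$. Your proposed remedy --- route each matched diagonal entry's dominant weight toward the omitted vertex --- works verbatim only when every row carrying a matched diagonal entry is adjacent to a vertex outside $\mc{C}$, which fails in general, since such rows may lie deep inside the induced subgraph on $\mc{C}$. Making the routing global requires, for each such row, a chain of strictly dominating exponents leading out of $\mc{C}$, i.e., essentially a spanning-forest expansion of the minor; constructing it and verifying compatibility with the given matching is the real content of the theorem and is absent from the proposal. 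Identifying the obstruction is not the same as removing it.

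For comparison, the paper sidesteps the combinatorial construction entirely. It inducts on the submatrix size, writes the row and column index sets as $\mc{I}\cup\mc{J}$ and $\mc{I}\cup\mc{K}$ with $\mc{I}$ the common indices, handles the principal case $\mc{J}=\mc{K}=\emptyset$ deterministically via Lemma \ref{lemL} (positive definiteness of $\bm{B}_{\mc{II}}$ for $|\mc{I}|\le N-1$), and otherwise conditions on all but one entry of the permuted diagonal: if that entry lies in $\bm{B}_{\mc{J}\mc{K}}$ it is independent of every other entry of the submatrix, so the rank-deficiency equation \eqref{rankdef1} holds with probability zero; if it lies in $\bm{B}_{\mc{I}\mc{K}}$ the residual correlation through the diagonal entry $\bm{B}'_{1,1}$ is neutralized by a change-of-distributions argument showing the offending event $\alpha_1=-1$ in \eqref{rankdef3} has probability zero. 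If you wish to salvage your route, the cleanest repair is to replace the tropical tie-breaking by the all-minors matrix-tree expansion of $\det\bm{B}_{\mc{R}\mc{C}}$ as a signed sum over spanning forests pairing $\mc{R}^c$ with $\mc{C}^c$: distinct forests contribute distinct monomials, so $p\not\equiv 0$ as soon as one admissible forest exists --- but you would then still owe a combinatorial argument deducing the existence of such a forest from the nonzero permuted diagonal, so the work does not disappear, it merely relocates.
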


From Theorem \ref{feasthm}, with $|\mc{A}^c| \ge |\mc{M}^c|$, if $\spr(\bm{B}_{\mc{A}^c\mc{M}^c}) =  |\mc{M}^c| \le N-1$, 
we have with probability one that $\rank(\bm{B}_{\mc{A}^c\mc{M}^c}) = |\mc{M}^c|$, and there exists no attack restricted to $\mc{A}$ that is unobservable by a set of PMUs $\mc{M}$. 

\begin{RK}
It has been known in the literature that (see e.g., \cite{sprankbook}), a full structural rank of a matrix leads to a full rank matrix with probability one, as long as the nonzero entries in the matrix are drawn independently from continuous probability distributions. However, it is worth noting that this is not sufficient for proving Theorem \ref{feasthm}. This is because, as in Theorem \ref{feasthm}, we are interested in matrices that are \emph{submatrices of a graph Laplacian}: even with the edge weights of the graph drawn independently, the entries in these submatrices are \emph{correlated} due to the special structure of a graph Laplacian. Such correlation leads to technical difficulties for the proof, which can be overcome as shown in Appendix \ref{secprffeasthm}.
\end{RK}


We note that the structural rank of a matrix can be computed in polynomial time by finding the maximum bipartite matching in a graph \cite{sprankbook}. Since whether an entry of $\bm{B}$ is non-zero is solely determined by the topology of the network, we have the following corollary.

\begin{cor}
Given $\mc{A}$ and $\mc{M}$, whether a non-zero unobservable attack exists can be determined with probability one based solely on the knowledge of the grid topology.
\end{cor}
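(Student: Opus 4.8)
The plan is to convert the existence question into a rank condition via Lemma \ref{lemfeas1}, and then show that this rank condition can be decided---with probability one---by a quantity, the structural rank, that depends only on the zero/nonzero pattern of $\bm{B}$ and hence only on the grid topology. By Lemma \ref{lemfeas1}, a non-zero unobservable attack restricted to $\mc{A}$ exists if and only if $\bm{B}_{\mc{A}^c\mc{M}^c}$ is column rank deficient, i.e. $\rank(\bm{B}_{\mc{A}^c\mc{M}^c}) < |\mc{M}^c|$. So the task reduces to certifying this rank deficiency using topological data alone.

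Next I would split into three exhaustive cases according to cardinalities and structural rank. First, if $|\mc{A}^c| < |\mc{M}^c|$, the matrix has fewer rows than columns and is column rank deficient \emph{deterministically}, so an attack always exists; this is settled by counting set sizes. Second, if $|\mc{A}^c| \ge |\mc{M}^c|$ but $\spr(\bm{B}_{\mc{A}^c\mc{M}^c}) < |\mc{M}^c|$, then since $\rank \le \spr$ always holds, the matrix is again column rank deficient deterministically and an attack exists. Third, if $|\mc{A}^c| \ge |\mc{M}^c|$ and $\spr(\bm{B}_{\mc{A}^c\mc{M}^c}) = |\mc{M}^c|$, I invoke Theorem \ref{feasthm}: since $\mc{M}$ contains at least the reference bus we have $|\mc{M}^c| \le N-1$, so the hypothesis $\min(|\mc{A}^c|, |\mc{M}^c|) \le N-1$ is met, and the theorem yields $\rank(\bm{B}_{\mc{A}^c\mc{M}^c}) = |\mc{M}^c|$ with probability one, giving full column rank and hence no attack.

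It then remains to observe that in every case the verdict is a function only of the cardinalities $|\mc{A}^c|$, $|\mc{M}^c|$ and of $\spr(\bm{B}_{\mc{A}^c\mc{M}^c})$. By the definitions of a set of independent entries and of the structural rank, $\spr(\bm{B}_{\mc{A}^c\mc{M}^c})$ depends solely on \emph{which} entries of the submatrix are nonzero; and by the sign observations preceding Theorem \ref{feasthm}, the zero/nonzero pattern of $\bm{B}$ is fully fixed by the network topology, independent of the reactance values. Hence the whole decision depends on topology alone and, since the structural rank is computable by maximum bipartite matching, it can be carried out in polynomial time.

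The main subtlety lies in the third case, where the conclusion holds only \emph{with probability one} rather than deterministically; this is precisely where the ``probability one'' qualifier in the corollary originates and is inherited wholesale from Theorem \ref{feasthm}. The one point requiring care is to confirm that the cardinality hypothesis $\min(|\mc{A}^c|,|\mc{M}^c|)\le N-1$ of Theorem \ref{feasthm} is satisfied for \emph{every} admissible pair $\mc{A},\mc{M}$, which follows from the non-emptiness of $\mc{M}$ (it always contains the reference bus). Once this is in place, the three cases cover all possibilities and the corollary follows.
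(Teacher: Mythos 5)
Your proposal is correct and takes essentially the same route as the paper: reduce existence to column rank deficiency of $\bm{B}_{\mc{A}^c\mc{M}^c}$ via Lemma \ref{lemfeas1}, settle the cases $|\mc{A}^c|<|\mc{M}^c|$ and $\spr(\bm{B}_{\mc{A}^c\mc{M}^c})<|\mc{M}^c|$ deterministically using $\rank\le\spr$, invoke Theorem \ref{feasthm} for the full-structural-rank case (where the probability-one qualifier originates), and observe that the structural rank depends only on the zero/nonzero pattern of $\bm{B}$, which is fixed by the topology. Your explicit verification that $|\mc{M}^c|\le N-1$ holds because $\mc{M}$ is nonempty matches the hypothesis the paper states just after Theorem \ref{feasthm}, so nothing is missing.
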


\section{Solving the sparsest unobservable attacks} \label{secmin}
In this section, we study the problem of finding the sparsest unobservable attacks given any set of PMUs $\mc{M}$ (cf. \eqref{l0min_g2}). 
As remarked in Section \ref{secmM}, $l_0$ minimization such as \eqref{l0min_g2} is NP-hard. 
We will show that the sparsest unobservable attack can in fact be found in \emph{polynomial time with probability one}. We first introduce a key concept --- a \emph{vulnerable vertex cut}. We then state our main theorem that yields an explicit solution for the sparsest unobservable attack problem \eqref{l0min_g2}. We prove that this solution both upper and lower bounds the optimum of \eqref{l0min_g2}, hence proving the theorem. 

\subsection{Vulnerable vertex cut and vulnerable vertex connectivity}
We start with the following basic definitions:
\begin{define}[Vertex cut]
A vertex cut of a connected graph $\mc{G}$ is a set of vertices whose removal renders $\mc{G}$ disconnected.
\end{define}
\begin{define}[Vertex connectivity]
The vertex connectivity of a graph $\mc{G}$, denoted by $\kappa(\mc{G})$, is the size of the minimum vertex cut of $\mc{G}$, i.e., it is the minimum number of vertices that need to be removed to make the remaining graph disconnected.
\end{define}

From the definition of the augmented graph $\mc{G^M}$ in Section \ref{sec:aug}, since all the buses in $\bar{\mc{M}}$ (containing $\mc{M}$ and the dummy bus) are pair-wise connected, we have the following lemma: 
\begin{lem} \label{auglem}
For any vertex cut of the augmented graph $\mc{G^M}$, there is no pair of the buses in $\bar{\mc{M}}$ that are disconnected by this cut.
\end{lem}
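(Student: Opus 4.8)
The plan is to exploit the single structural feature that distinguishes $\mc{G^M}$ from $\mc{G}$: by construction (Definition \ref{gmdef}, item 3), an edge is added between \emph{every} pair of buses in $\bar{\mc{M}}$, so $\bar{\mc{M}}$ induces a clique in $\mc{G^M}$. The whole lemma reduces to the elementary observation that a clique cannot be split apart by removing vertices, as long as two of its members survive.

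First I would fix an arbitrary vertex cut $S$ of $\mc{G^M}$ and take any two buses $u,v \in \bar{\mc{M}}$ that survive the cut, i.e.\ $u,v \notin S$. The restriction to surviving members is the right reading of the statement: buses of $\bar{\mc{M}}$ that happen to lie inside $S$ are themselves removed, so the assertion concerns only pairs of remaining vertices. Next I would invoke the clique property: since $u,v \in \bar{\mc{M}}$, the edge $\{u,v\}$ is present in $\mc{G^M}$. Deleting the vertices of $S$ removes only those vertices together with their incident edges; because both endpoints $u$ and $v$ persist, the edge $\{u,v\}$ persists as well in the residual graph $\mc{G^M}\setminus S$.

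Finally I would conclude that $u$ and $v$ remain adjacent after the cut, hence lie in the same connected component and are not separated by $S$. Since $u$ and $v$ were an arbitrary surviving pair from $\bar{\mc{M}}$, no pair of buses in $\bar{\mc{M}}$ is disconnected by $S$, which is exactly the claim; the cases where $S$ leaves at most one member of $\bar{\mc{M}}$ hold vacuously. I do not expect any genuine obstacle: the result follows directly from the fact that adjacency inside a clique is preserved under vertex deletion whenever both endpoints remain. The only point needing care is pinning down the meaning of ``disconnected by this cut'' as referring to pairs of non-removed vertices; once that is made precise, the argument is immediate.
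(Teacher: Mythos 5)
Your proof is correct and matches the paper's reasoning: the paper justifies this lemma in one line by noting that all buses in $\bar{\mc{M}}$ are pairwise connected (a clique) in $\mc{G^M}$, which is exactly the adjacency-preservation argument you spell out. Your additional care in restricting attention to surviving pairs is a reasonable clarification of the same idea.
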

Accordingly, we introduce the following notations which will be used later on: 

\begin{ntn} \label{ntn1}
Given a vertex cut of $\mc{G^M}$, we denote the set of buses disconnected from $\bar{\mc{M}}$ after removing the cut set by $\mc{S}$. The cut set itself is denoted by $N(\mc{S})$. 
\end{ntn}
With the vertex cut $N(\mc{S})$, $\mc{G^M}$ is partitioned into three subgraphs:  
\begin{enumerate}
\item $\mc{S}$, which does not contain any bus in $\bar{\mc{M}}$, i.e., $\mc{S}\subseteq\bar{\mc{M}}^c$. 
\item $N(\mc{S})$, which is the vertex cut set itself, and may contain buses in $\bar{\mc{M}}$. 
\item $\mc{N}\backslash N[\mc{S}]$, which contains (not necessarily exclusively) all the remaining buses in $\bar{\mc{M}}$ after removing the cut set.
\end{enumerate}

An illustrative example with a cut $N(\mc{S})$ of size 2 is depicted in Figure \ref{3spgen} in Section \ref{upsec}. 
We note that there is a slight 
abuse of notation in $N(S)$: In general, a cut set does not necessarily consist of exactly all the neighboring nodes of $S$. Nonetheless, as will be shown in the remainder of the paper, we need only care about the \emph{minimum} cut set, 
which indeed consists of exactly all the neighboring nodes of $S$, namely, $N(S)$. 
Leveraging the above notation, we now introduce a key type of vertex cut on $\mc{G^M}$. 
\begin{define}[Vulnerable vertex cut]
A vulnerable vertex cut of a connected augmented graph $\mc{G^M}$ is a vertex cut $N(\mc{S})$ for which 
$\vert \mc{U}^c \cap N[\mc{S}]\vert \ge \vert N(\mc{S})\vert+1$. 
\end{define}
In other words, the number of \emph{alterable} buses in  $N[\mc{S}]$ is no less than the cut size plus one. 
The reason for calling such a vertex cut ``vulnerable'' will be made exact later in Section \ref{upsec}. The basic intuition is the following. In order to have $\Delta\bm{\theta}_{\mc{M}} = 0$ (unobservability), the key is to have the phase angle changes on the cut $N(\mc{S})$ be zero, with power injection changes (which can only happen on the alterable buses) restricted in $N[\mc{S}]$. As will be shown later, this can be achieved if a cut $N(\mc{S})$ is ``vulnerable'' as defined above. 
We note that it is possible that no vulnerable vertex cut exists (e.g., in the extreme case that all buses are unalterable). 

Accordingly, we define the following variation on the vertex connectivity. 
\begin{define}[Vulnerable vertex connectivity]
The vulnerable vertex connectivity of an augmented graph $\mc{G^M}$, denoted by $\bar{\kappa}(\mc{G^M})$, is the size of the minimum vulnerable vertex cut of $\mc{G^M}$. If no vulnerable vertex cut exists, $\bar{\kappa}(\mc{G^M})$ is defined to be $\infty$. 
\end{define}
We note that the concepts of vulnerable vertex cut and vulnerable vertex connectivity do not apply to the original graph $\mc{G}$. We immediately have the following lemma:
\begin{lem} \label{kplem}
If a vulnerable vertex cut exists, then $\bar{\kappa}(\mc{G^M})\le M = |\mc{M}|$. 
\end{lem}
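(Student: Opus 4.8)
The plan is to show that the set $\bar{\mc{M}}$ itself induces a vulnerable vertex cut whose cut size is at most $M$, so that the minimum vulnerable vertex cut can only be smaller. Recall that in the augmented graph $\mc{G^M}$ the dummy bus is connected \emph{only} to the sensors in $\mc{M}$, and every pair of buses in $\bar{\mc{M}} = \mc{M}\cup\{\text{dummy}\}$ is pairwise connected. The natural candidate for the cut is $\mc{M}$ itself (the $M$ actual sensor buses), which separates the dummy bus from the rest of the network: once $\mc{M}$ is removed, the dummy bus becomes isolated, since all its edges went to $\mc{M}$. First I would verify that $N(\mc{S})=\mc{M}$ is a genuine vertex cut in the sense of Notation \ref{ntn1}, with $\mc{S}=\{\text{dummy}\}$ the component disconnected from the remaining buses of $\bar{\mc{M}}$.

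The second step is to check the vulnerability inequality $\vert \mc{U}^c \cap N[\mc{S}]\vert \ge \vert N(\mc{S})\vert+1$ for this candidate. Here $N(\mc{S})=\mc{M}$ has size $M$, so the right-hand side is $M+1$. The closed neighborhood $N[\mc{S}]$ is the dummy bus together with $\mc{M}$, i.e. $N[\mc{S}] = \{\text{dummy}\}\cup\mc{M} = \bar{\mc{M}}$, which contains $M+1$ vertices. The key observation is that, by Definition \ref{gmdef}, one may WLOG take the sensor buses in $\mc{M}$ to be alterable (the problem only requires the \emph{dummy} bus to be unalterable), so that $\mc{M}\subseteq\mc{U}^c$; then $\vert \mc{U}^c \cap N[\mc{S}]\vert \ge \vert\mc{M}\vert = M$. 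This gives only $M$, one short of the required $M+1$, so the argument must be refined: the natural fix is to enlarge $\mc{S}$ slightly, or to choose the cut more carefully so that at least one additional alterable bus lies in $N[\mc{S}]$.

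A cleaner route, which I expect is the intended one, is to pick any single alterable bus and build $\mc{S}$ around it. Concretely, note that since the graph is connected and nontrivial there exists at least one alterable bus $v\notin\bar{\mc{M}}$ reachable from the rest of the network; taking $\mc{S}$ to be (a component containing) such a $v$ with cut set $N(\mc{S})\subseteq\mc{M}$ of size at most $M$, the closed neighborhood $N[\mc{S}]$ then contains both the alterable bus $v$ \emph{and} the (alterable) sensor buses forming the cut, pushing $\vert \mc{U}^c\cap N[\mc{S}]\vert$ up to at least $\vert N(\mc{S})\vert + 1$. The hypothesis of the lemma — that a vulnerable vertex cut exists at all — guarantees that such a configuration is available, so one does not need to construct it from scratch; one only needs to exhibit \emph{one} vulnerable cut of size $\le M$ and invoke that $\bar\kappa(\mc{G^M})$ is the minimum over all vulnerable cuts.

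The main obstacle is pinning down the vulnerability count precisely, namely ensuring the strict ``$+1$'' in $\vert \mc{U}^c\cap N[\mc{S}]\vert\ge\vert N(\mc{S})\vert+1$ rather than mere equality with $\vert N(\mc{S})\vert$. This is where the argument is delicate: the cut $\mc{M}$ alone separates only the dummy (unalterable) bus, which contributes nothing to the alterable count, so one genuinely needs at least one alterable non-sensor bus on the $\mc{S}$-side of a cut contained in $\mc{M}$. I would therefore lean on the standing assumption that a vulnerable vertex cut exists, observe that any such cut $N(\mc{S})$ that already separates some $\mc{S}$ from $\bar{\mc{M}}$ must have $N(\mc{S})$ drawn from the buses adjacent to $\mc{S}$, and then argue that replacing $N(\mc{S})$ by $\mc{M}$ (or by the portion of $\mc{M}$ adjacent to the relevant component) can only decrease the cut size while preserving vulnerability, yielding $\bar\kappa(\mc{G^M})\le M$. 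Verifying that this replacement does not destroy the vulnerability inequality is the step that requires the most care.
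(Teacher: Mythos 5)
Your proposal contains a genuine gap, and in fact the direct route you pursue cannot work as stated. You correctly identify that taking $\mc{S}=\{\text{dummy}\}$ with cut $\mc{M}$ falls one alterable bus short, but your two attempted fixes both fail. First, the claim that one may ``WLOG take the sensor buses in $\mc{M}$ to be alterable'' is unjustified: $\mc{U}$ is an arbitrary given set and may well contain sensor buses; only the dummy bus is forced to be unalterable. Second, even granting alterable sensors, exhibiting a single alterable bus $v\notin\bar{\mc{M}}$ does not establish the vulnerability inequality for a cut $N(\mc{S})\subseteq\mc{M}$ of size up to $M$ --- you would need $M+1$ alterable buses in $N[\mc{S}]$, and nothing in your argument supplies that many. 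Indeed, $\mc{M}$ need \emph{not} be a vulnerable vertex cut in general (e.g.\ when there are few alterable buses but a small vulnerable cut exists elsewhere), so any proof that tries to show $\mc{M}$ is always vulnerable is doomed; yet the lemma is still true in such cases because $\bar{\kappa}(\mc{G^M})$ is already small.

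The missing idea is to argue by contradiction, which is what the paper does: suppose $\bar{\kappa}(\mc{G^M})\ge M+1$ and let $N(\mc{S})$ be a minimum vulnerable cut. The definition of vulnerability then certifies at least $\bar{\kappa}(\mc{G^M})+1\ge M+2$ alterable buses in $N[\mc{S}]$, and since $\mc{S}\subseteq\bar{\mc{M}}^c$ these all lie in $N[\bar{\mc{M}}^c]$. This is precisely the quantitative input your argument lacks: it makes the cut $\mc{M}$ (of size $M$, separating the dummy bus from $\bar{\mc{M}}^c$) satisfy $\vert\mc{U}^c\cap N[\bar{\mc{M}}^c]\vert\ge M+2\ge M+1$, i.e.\ $\mc{M}$ is a vulnerable cut of size $M<\bar{\kappa}(\mc{G^M})$, a contradiction. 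Your closing suggestion of ``replacing $N(\mc{S})$ by $\mc{M}$'' gestures at this swap, but without the contradiction hypothesis the replacement neither decreases the cut size nor preserves vulnerability, which is exactly the step you flag as delicate and leave unresolved.
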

\begin{proof}
Suppose a vulnerable vertex cut exists, and $\bar{\kappa}(\mc{G^M})\ge M+1$. Denote the minimum vulnerable vertex cut by $N(\mc{S})$ (cf. Notation \ref{ntn1}). Now consider the set $\mc{M}$: it is a vertex cut of $\mc{G^M}$ that separates the dummy bus and 
$\bar{\mc{M}}^c$. Because 
there are at least $\bar{\kappa}(\mc{G^M})+1 \ge M+2$ alterable buses in $N[\mc{S}]\subseteq N[\bar{M}^c]$, 
$\mc{M}$ is also a \emph{vulnerable vertex cut}. This contradicts the minimum vulnerable vertex cut having size at least $M+1$.
\end{proof}

\subsection{Main result}
We now state the following theorem that gives an explicit solution of the sparsest unobservable attack problem in terms of the vulnerable vertex connectivity $\bar{\kappa}(\mc{G^M})$. 

\begin{thm} \label{mainthm}
For a connected grid $\mc{G=\{N,L},\bm{w}\}$, assume that the line reactances $x_l ~(l\in\mc{L})$ are independent continuous random variables strictly bounded away from zero from below. Given any $\mc{M}$ and $\mc{U}$, 
the minimum sparsity of unobservable attacks, i.e., the global optimum of \eqref{l0min_g2}, equals $\bar{\kappa}(\mc{G^M})+1$ with probability one.
\end{thm}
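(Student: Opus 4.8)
The plan is to establish Theorem \ref{mainthm} by squeezing the optimum of \eqref{l0min_g2} between two matching bounds: a deterministic \emph{constructive upper bound} exhibiting an unobservable attack of sparsity $\bar{\kappa}(\mc{G^M})+1$, and a \emph{lower bound} (holding with probability one) showing every unobservable attack has at least $\bar{\kappa}(\mc{G^M})+1$ nonzero entries. Throughout I would work on the augmented graph $\mc{G^M}$, which is legitimate by Lemma \ref{lemaug}, and treat the degenerate case $\bar{\kappa}(\mc{G^M})=\infty$ separately (it falls out of the lower-bound argument, since any attack would produce a vulnerable cut).

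For the upper bound, let $N(\mc{S})$ be a minimum vulnerable vertex cut, so $|N(\mc{S})|=\bar{\kappa}(\mc{G^M})$ and $|\mc{U}^c\cap N[\mc{S}]|\ge\bar{\kappa}(\mc{G^M})+1$. I would look for an attack whose phase-angle perturbation $\Delta\bm{\theta}$ is supported on $\mc{S}$: since $\mc{S}\subseteq\bar{\mc{M}}^c$ this forces $\Delta\bm{\theta}_{\mc{M}}=\bm{0}$, and since $\Delta\bm{\theta}$ vanishes off $\mc{S}$ the induced injection $\Delta\bm{P}=\bm{B}\Delta\bm{\theta}$ is supported within $N[\mc{S}]$. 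Rearranging the vulnerability inequality gives $|\mc{U}\cap N[\mc{S}]|\le|\mc{S}|-1$, so I can choose $T\subseteq N[\mc{S}]$ with $|T|=|\mc{S}|-1$ containing every unalterable bus of $N[\mc{S}]$ and impose $\Delta P_i=0$ for $i\in T$. This is a homogeneous system of $|\mc{S}|-1$ equations in the $|\mc{S}|$ unknowns $\Delta\bm{\theta}_{\mc{S}}$, hence has a nonzero solution; the resulting $\Delta\bm{P}$ is supported on $N[\mc{S}]\setminus T$, a set of $\bar{\kappa}(\mc{G^M})+1$ alterable buses, so $\Delta\bm{P}_{\mc{U}}=\bm{0}$. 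Moreover $\Delta\bm{P}\ne\bm{0}$ because the kernel of the connected Laplacian $\bm{B}$ consists only of constant vectors, which cannot be supported on the proper subset $\mc{S}$. This yields a valid unobservable attack of sparsity at most $\bar{\kappa}(\mc{G^M})+1$.

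For the lower bound, let $\Delta\bm{P}$ be any unobservable attack and set $\mc{A}=\mathrm{supp}(\Delta\bm{P})\subseteq\mc{U}^c$. By Lemma \ref{lemfeas1} the existence of this attack forces $\bm{B}_{\mc{A}^c\mc{M}^c}$ to be column rank deficient. The key step is to upgrade rank deficiency to \emph{structural} rank deficiency: in contrapositive form Theorem \ref{feasthm} gives $\rank(\bm{B}_{\mc{A}^c\mc{M}^c})<|\mc{M}^c|\Rightarrow\spr(\bm{B}_{\mc{A}^c\mc{M}^c})<|\mc{M}^c|$ with probability one, and since there are only finitely many candidate supports $\mc{A}$, a union bound makes this implication hold simultaneously for all $\mc{A}$ on one probability-one event (this is where the ``with probability one'' enters). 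I would then read $\spr$ as the maximum bipartite matching between rows $\mc{A}^c$ and columns $\mc{M}^c$, with an edge whenever $B_{ij}\ne0$, i.e. $i\in N[j]$, and apply the defect form of Hall's theorem to extract a column set $\mc{C}\subseteq\mc{M}^c$ with $|N[\mc{C}]\cap\mc{A}^c|<|\mc{C}|$. Setting $\mc{S}=\mc{C}$ and using $|N[\mc{S}]|=|\mc{S}|+|N(\mc{S})|$ gives $|N(\mc{S})|<|N[\mc{S}]\cap\mc{A}|\le|\mc{A}|$; since $\mc{A}\subseteq\mc{U}^c$ the same chain yields $|\mc{U}^c\cap N[\mc{S}]|\ge|N(\mc{S})|+1$. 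Because the added edges of $\mc{G^M}$ touch only $\bar{\mc{M}}$, which is disjoint from $\mc{S}\subseteq\mc{M}^c$, one has $N_{\mc{G^M}}(\mc{S})=N_{\mc{G}}(\mc{S})$ and the dummy bus lies outside $N[\mc{S}]$, so $N(\mc{S})$ is a genuine vertex cut separating $\mc{S}$ from $\bar{\mc{M}}$. Hence $N(\mc{S})$ is a vulnerable vertex cut of size at most $|\mc{A}|-1$, giving $\bar{\kappa}(\mc{G^M})\le|\mc{A}|-1$, i.e. $\|\Delta\bm{P}\|_0\ge\bar{\kappa}(\mc{G^M})+1$.

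The main obstacle is the lower bound, specifically the passage from the analytic fact (a genuine attack forces rank deficiency of $\bm{B}_{\mc{A}^c\mc{M}^c}$) to the combinatorial object (a vulnerable cut). Two points demand care: invoking Theorem \ref{feasthm} so that rank deficiency yields structural rank deficiency \emph{uniformly} over all supports on a single probability-one event, and correctly handling the open/closed neighborhood bookkeeping in the matching argument so that the deficient set produces \emph{exactly} the vulnerability inequality $|\mc{U}^c\cap N[\mc{S}]|\ge|N(\mc{S})|+1$ together with a legitimate cut of $\mc{G^M}$. The upper bound, by contrast, is an explicit deterministic construction and should be routine once the vulnerability inequality is rewritten as $|\mc{U}\cap N[\mc{S}]|\le|\mc{S}|-1$.
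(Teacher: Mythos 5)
Your proposal is correct, and its two halves relate to the paper's proof differently. The upper bound is essentially identical to the paper's Theorem \ref{upthm}: restrict $\Delta\bm{\theta}$ to be supported on $\mc{S}$, impose $|\mc{S}|-1$ homogeneous constraints killing the injections at all unalterable buses of $N[\mc{S}]$ plus enough alterable ones, and take a nonzero null vector; your explicit check that $\Delta\bm{P}\ne\bm{0}$ (via the kernel of a connected Laplacian being the constants, which cannot be supported on a proper subset) is a small point the paper leaves implicit. The lower bound is where you genuinely diverge. The paper argues in the contrapositive: for every support $\mc{A}\subseteq\mc{U}^c$ with $|\mc{A}|\le\bar{\kappa}$, it verifies a Hall-type \emph{row} condition (Property \ref{cond1def}) for $\bm{B}_{\mc{A}^c\mc{M}^c}$ directly from the minimality of $\bar{\kappa}$ (any row set $\tilde{\mc{N}}$ touching $\mc{M}$ must have at least $\bar{\kappa}$ neighbors, else $N(\tilde{\mc{N}})$ would be a smaller vulnerable cut), then converts this to full structural rank via its own Lemma \ref{cond1equ} and to full rank w.p.\ 1 via Theorem \ref{feasthm}. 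You instead run the implication forward: an attack forces rank deficiency (Lemma \ref{lemfeas1}), hence structural rank deficiency on a single probability-one event obtained by a union bound over the finitely many supports, and then the defect form of Hall's theorem on the \emph{columns} extracts a witness set $\mc{C}=\mc{S}$ whose neighborhood is a vulnerable cut of size at most $\|\Delta\bm{P}\|_0-1$. The two arguments are combinatorial duals of one another; what yours buys is that it replaces the paper's bespoke Lemma \ref{cond1equ} with the standard K\H{o}nig/Hall defect theorem, produces the vulnerable cut explicitly as a certificate, and handles the non-square case and the $\bar{\kappa}(\mc{G^M})=\infty$ case uniformly rather than by separate case analysis. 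The only place you should add a line of care is the dummy bus: in the augmented graph it lies in $\mc{M}^c$ but is unalterable and only adjacent to $\mc{M}$, so you should either fold it into the monitored set (its phase change is forced to zero by $\Delta\bm{\theta}_{\mc{M}}=\bm{0}$ and $\Delta P=0$ there) or note explicitly that it cannot enter your deficient column set $\mc{C}$ without spoiling the claim that $N(\mc{S})$ separates $\mc{S}$ from $\bar{\mc{M}}$; the paper is equally casual on this bookkeeping, but your cut-extraction step leans on it slightly more.
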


We note that finding the minimum vulnerable vertex connectivity of a graph is computationally efficient. 
For polynomial time algorithms we refer the readers to \cite{nodesplit} and \cite{vazirani1992suboptimal}. In particular, vertex cuts are enumerated \cite{vazirani1992suboptimal} starting from the minimum and with increasing sizes, until a minimum vulnerable vertex cut is identified. 
We now prove Theorem \ref{mainthm} 
by upper and lower bounding the minimum sparsity of unobservable attacks in the following two subsections. 

\subsection{Upper bounding the minimum sparsity of unobservable attacks} \label{upsec}
We show that \emph{any} vulnerable vertex cut $N(\mc{S})$ provides an upper bound on the optimum of \eqref{l0min_g2} as follows. 
\begin{thm} \label{upthm}
For a connected grid $\mc{G}$ and a set of PMUs $\mc{M}$, for any vulnerable vertex cut of $\mc{G^M}$ denoted by $N(\mc{S})$ (cf. Notation \ref{ntn1}), there exists an unobservable attack of sparsity no higher than $|N(\mc{S})|+1$.
\end{thm}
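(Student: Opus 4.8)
The plan is to prove Theorem \ref{upthm} \emph{constructively} and \emph{deterministically}: given a vulnerable vertex cut $N(\mc{S})$, I will exhibit an explicit feasible point of \eqref{l0min_g2} whose objective value is at most $|N(\mc{S})|+1$. The construction perturbs the phase angles only on the separated set $\mc{S}$, and keeps them zero everywhere else.

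First I would set $\Delta\bm{\theta}_{\mc{S}} = \bm{x}$ for a vector $\bm{x}$ to be chosen, $\Delta\bm{\theta}_i = 0$ for every bus $i\notin\mc{S}$, and take $\Delta\bm{P} = \bm{B}\Delta\bm{\theta} = \bm{B}_{\mc{N}\mc{S}}\bm{x}$. Since $\mc{S}\subseteq\bar{\mc{M}}^c$, we have $\Delta\bm{\theta}_{\mc{M}} = \bm{0}$ automatically, so (by Lemma \ref{lemaug}) any such attack is unobservable by $\mc{M}$ in $\mc{G}$. The crucial structural observation is that every neighbor of $\mc{S}$ in $\mc{G}$ lies in the cut $N(\mc{S})$, because removing $N(\mc{S})$ disconnects $\mc{S}$ from the rest of the graph; hence $\bm{B}_{ij}=0$ whenever $i\notin N[\mc{S}]$ and $j\in\mc{S}$. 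Consequently the support of $\Delta\bm{P}$ is confined to $N[\mc{S}] = \mc{S}\cup N(\mc{S})$, a set of $|\mc{S}|+|N(\mc{S})|$ buses.

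The heart of the argument is a dimension count that activates the vulnerable condition. Rewriting it, $|\mc{U}^c\cap N[\mc{S}]|\ge |N(\mc{S})|+1$ is equivalent to $|\mc{U}\cap N[\mc{S}]|\le |\mc{S}|-1$. I would therefore choose a set $\mc{Z}\subseteq N[\mc{S}]$ of buses on which to \emph{force} the injection to vanish, selected so that $\mc{U}\cap N[\mc{S}]\subseteq\mc{Z}$ and $|\mc{Z}| = |\mc{S}|-1$; the inequality above guarantees this is possible (pad $\mc{Z}$ with arbitrary alterable buses of $N[\mc{S}]$ if needed). The requirement $(\Delta\bm{P})_i = 0$ for all $i\in\mc{Z}$ is the homogeneous system $\bm{B}_{\mc{Z}\mc{S}}\bm{x}=\bm{0}$ with $|\mc{Z}| = |\mc{S}|-1$ equations in $|\mc{S}|$ unknowns; being underdetermined, it admits some $\bm{x}\ne\bm{0}$. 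For this $\bm{x}$ the injection vanishes on all of $\mc{U}\cap N[\mc{S}]$, and since it already vanishes off $N[\mc{S}]$ we get $\Delta\bm{P}_{\mc{U}}=\bm{0}$, so the attack is feasible for \eqref{l0min_g2}; moreover the possibly-nonzero entries of $\Delta\bm{P}$ lie in $N[\mc{S}]\setminus\mc{Z}$, whose cardinality is $|\mc{S}|+|N(\mc{S})| - (|\mc{S}|-1) = |N(\mc{S})|+1$, giving $\|\Delta\bm{P}\|_0\le |N(\mc{S})|+1$. Finally I would confirm $\Delta\bm{P}\ne\bm{0}$: the vulnerable condition forces $|\mc{S}|\ge1$ and the cut $N(\mc{S})$ is nonempty, so $\mc{S}\subsetneq\mc{N}$ and the nonzero $\Delta\bm{\theta}$, supported on a proper subset, is not a multiple of the all-ones vector; since $\bm{B}$ is the Laplacian of a connected graph its kernel is exactly $\mathrm{span}(\bm{1})$, whence $\Delta\bm{P} = \bm{B}\Delta\bm{\theta}\ne\bm{0}$.

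I expect the main obstacle to be the bookkeeping that ties the vulnerable inequality to the number of zero-injection constraints one can afford: one must include \emph{every} unalterable bus of $N[\mc{S}]$ in $\mc{Z}$ while still leaving the system underdetermined, and it is exactly the margin $|\mc{U}\cap N[\mc{S}]|\le |\mc{S}|-1$ supplied by vulnerability that makes $|\mc{Z}|=|\mc{S}|-1$ both large enough (to cover $\mc{U}$) and small enough (to keep $\bm{x}\ne\bm{0}$). A secondary subtlety is the abuse of notation in $N(\mc{S})$: I only need that $N(\mc{S})$ contains all neighbors of $\mc{S}$, which holds for any separating cut and suffices to localize the support in $N[\mc{S}]$. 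Notably, this entire argument is deterministic and invokes neither Lemma \ref{lemL} nor the genericity of the line reactances, consistent with the absence of a ``with probability one'' qualifier in the statement.
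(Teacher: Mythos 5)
Your proposal is correct and follows essentially the same route as the paper: your set $\mc{Z}$ is exactly the paper's $\tilde{\mc{U}} = N[\mc{S}]\backslash\mc{A}$, and the underdetermined system $\bm{B}_{\mc{Z}\mc{S}}\bm{x}=\bm{0}$ with $|\mc{S}|-1$ rows is the paper's column-rank-deficiency argument for $\bm{B}_{\tilde{\mc{U}}\mc{S}}$. The only addition is your explicit check that $\Delta\bm{P}\ne\bm{0}$ via $\ker\bm{B}=\mathrm{span}(\bm{1})$, a small point the paper leaves implicit.
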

\begin{proof}
A vulnerable vertex cut $N(\mc{S})$ partitions $\mc{G^M}$ into $\mc{S}$, $N(\mc{S})$ and $\mc{N}\backslash \mc{N[S]}$, with $\mc{S}\subseteq \mc{M}^c$. 
Similarly to the range space interpretation of the sparsest unobservable attack \eqref{l0min_g2}, it is sufficient to show that there exists a non-zero vector in the range space of $\bm{B}_{\mc{N}\mc{S}}$ such that i) it has a sparsity no higher than $|N(\mc{S})|+1$, and ii) non-zero power injections occur only at the alterable buses. 

By re-indexing the buses, WLOG, i) let $\mc{S} = \{1,2,\ldots,|\mc{S}|\}$, and ii) let $\bm{B}_{\mc{N}\mc{S}}$ have the following partition as depicted in Figure \ref{multicol}:
\begin{enumerate}
\item The top submatrix $\bm{B}_{\mc{S}\mc{S}}$ is an $|\mc{S}|\times |\mc{S}|$ 
matrix. 
\item The middle submatrix (which will be shown to be $\bm{B}_{N(\mc{S})\mc{S}}$) consists of all the remaining rows, each of which has at least one \emph{non-zero} entry.
\item The bottom submatrix is an \emph{all-zero} matrix.
\end{enumerate}
In particular, from the definition of the Laplacian, 
the middle submatrix of $\bm{B}_{\mc{N}\mc{S}}$, as described above, 
is exactly $\bm{B}_{N(\mc{S})\mc{S}}$ 
because its row indices correspond to those 
buses not in $\mc{S}$ but connected to at least one bus in $\mc{S}$. 

From the definition of the vulnerable vertex cut, $\vert \mc{U}^c \cap N[\mc{S}]\vert \ge \vert N(\mc{S})\vert+1$. Now, pick any set of $\vert N(\mc{S})\vert+1$ alterable buses in $\mc{U}^c \cap N[\mc{S}]$, denote this set by $\mc{A}$, and denote the other buses in $N[\mc{S}]$ by $\tilde{U} \triangleq N[\mc{S}]\backslash \mc{A}$. Clearly, $|\tilde{U}| = |S|-1$. 
Therefore, $\bm{B}_{\tilde{\mc{U}}\mc{S}}$ (which is a submatrix of $\bm{B}_{N[\mc{S}]\mc{S}}$) has $|S|$ columns but only $|S|-1$ rows, and is hence column rank deficient. 

Now, we let $\Delta\bm{\theta}_{\mc{S}}$ be a non-zero vector in the null space of $\bm{B}_{\tilde{\mc{U}}\mc{S}}$:
\begin{align}
\bm{B}_{\tilde{\mc{U}}\mc{S}}\Delta\bm{\theta}_{\mc{S}} = \bm{0}. \label{thetaf}
\end{align}
Then, we construct an attack vector $\Delta\bm{P} =  \bm{B}_{\mc{N}\mc{S}} \Delta\bm{\theta}_{\mc{S}}$: it has some possibly non-zero values at the indices that correspond to $\mc{A}$, and has \emph{zero values at all other indices.} Thus,
\begin{align}
\Vert\Delta\bm{P}\Vert_0\le |\mc{A}| = |N(\mc{S})|+1. \label{pf_0}
\end{align}
\end{proof}

\begin{figure}[tb!]
  \centering
  \subfigure[Block representation of $\bm{B}_{\mc{N}\mc{S}}$.]{
  \includegraphics[scale = 0.55]{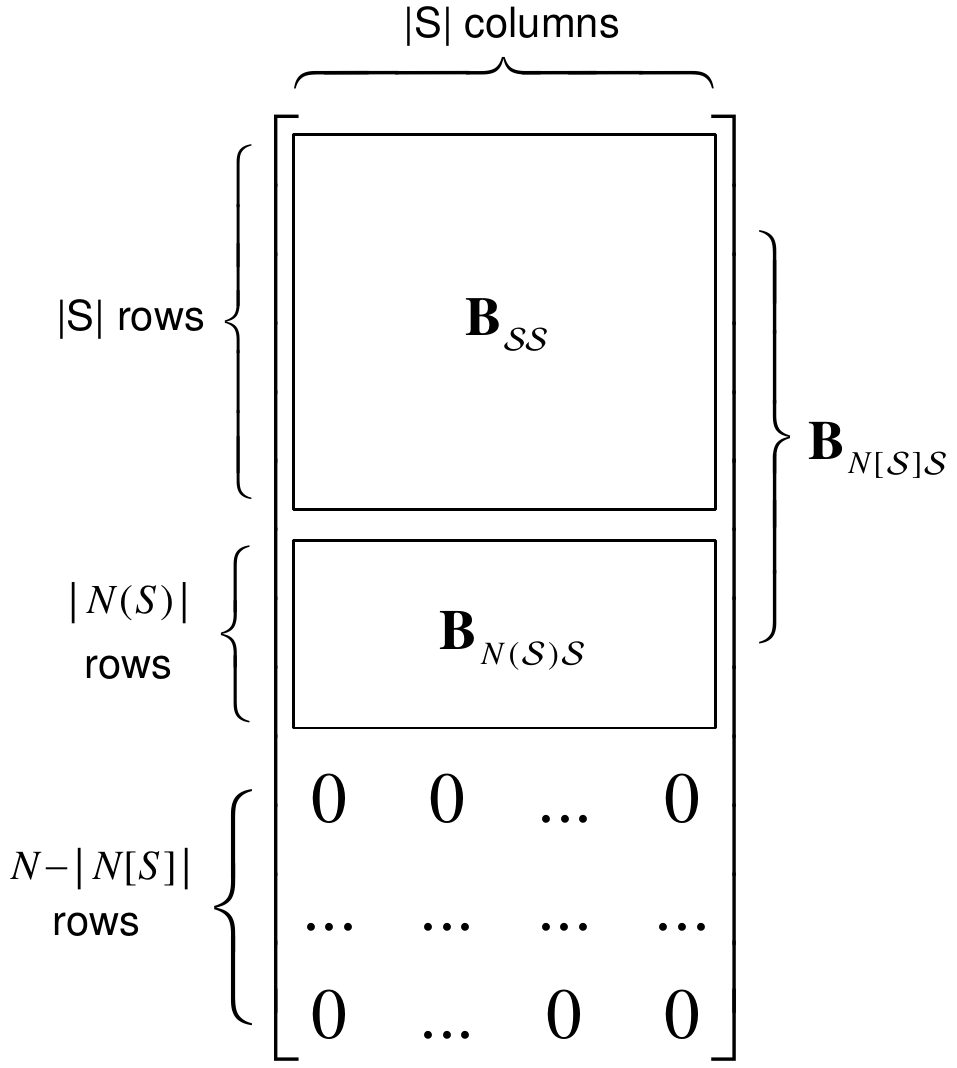}
  \label{multicol}
  }
  \subfigure[A 3-sparse unobservable power injection attack.]{
  \includegraphics[scale = 0.65]{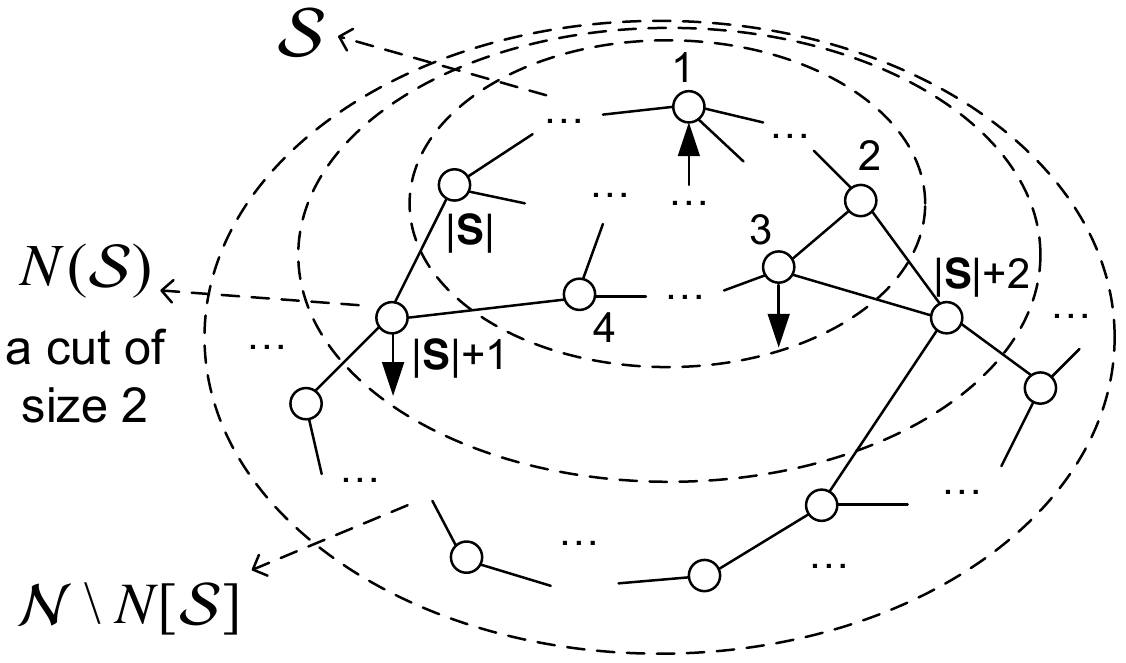}
  \label{3spgen}
  }
  \caption{Sparse attacks with voltage phase angle changes restricted to buses $1,2,\ldots |\mc{S}|$.}
  \label{genex}
\end{figure}

Theorem \ref{upthm} explains our terminology of a ``vulnerable vertex cut'', since if a vertex cut is vulnerable, it leads to an unobservable attack. 
 If a vulnerable vertex cut of $\mc{G^M}$ exists, applying Theorem \ref{upthm} to the \emph{minimum} one, we have that the optimum of \eqref{l0min_g2} is upper bounded by $\bar{\kappa}(\mc{G^M})+1$. If no vulnerable vertex cut exists, $\bar{\kappa}(\mc{G^M})+1 = \infty$ is a trivial upper bound. 

We now provide a graph-theoretic interpretation of Theorem \ref{upthm}. As shown in Figure \ref{multicol} and \ref{3spgen}, all the buses can be partitioned into three subsets $\mc{S}, N(\mc{S})$ and $\mc{N}\backslash N[\mc{S}]$, corresponding to the row indices of the top, middle and bottom submatrices of $\bm{B}_{\mc{N}\mc{S}}$, respectively. $N(\mc{S})$ is a vulnerable vertex cut of $\mc{G^M}$ that separates $\mc{S}$ from $\mc{N}\backslash N[\mc{S}]$. The 
sparse attack $\Delta\bm{P}$ (cf. \eqref{pf_0}) is formed by injecting/extracting power at $|N(\mc{S})|+1$ alterable buses in $N[\mc{S}]$, such that the phase angle changes at $\mc{N}\backslash \mc{S}$ are all zero. Note that $(\mc{N}\backslash \mc{S})\supseteq \mc{M}$. 
The example with $|N(\mc{S})| = 2$ in Figure \ref{3spgen} illustrates a $3$-sparse attack with power injection/extractions at (assumed alterable) buses $1,3$ and $|\mc{S}|+1$, such that the phase angle changes at $\mc{N}\backslash \mc{S}$ are all zero. 

We end this subsection by introducing a notion of ``potential impact'' of unobservable attacks. We make the following observation: As long as an attacker takes control of all the power injections in a vulnerable vertex cut $N(\mc{S})$ (assuming they are alterable), it can always \emph{cancel out the effects of anything that happens within $N[\mc{S}]$} on the measurements taken in $\mc{M}~(\subseteq \mc{N}\backslash\mc{S})$. 
Thus, by taking control of all the buses in $N(\mc{S})$, an attacker can successfully \emph{hide} from the system operator a power injection attack with a zero norm as large as 
\begin{align}
|N[\mc{S}]| = |N(\mc{S})| + |\mc{S}|~(\gg |N(\mc{S})| + 1).
\end{align}

Accordingly, we introduce the following definition. 

\begin{define} \label{impactdef}
The potential impact of unobservable attacks associated with a vulnerable vertex cut $N(\mc{S})$ is defined as $|N[\mc{S}]|$.
\end{define}
\begin{RK}
Definition \ref{impactdef} is one characterization of attack impact based solely on graph theoretic properties. In practice, there 
are many different notions of attack impact depending on, e.g., the interpretation of the attacks and the operating objective of the system. 
\end{RK}

Employing Definition \ref{impactdef}, 
we can \emph{differentiate} the potential impacts of multiple sparsest unobservable attacks \emph{with the same sparsity}. 
An illustration is depicted in Figure \ref{impfig}. In this example, two vulnerable vertex cuts both of size two, $N(\mc{S}_1) = \{V_{1A}, V_{1B}\}$ and $N(\mc{S}_2) = \{V_{2A}, V_{2B}\}$, are enclosed by solid ovals. Accordingly, both cuts enable 3-sparse unobservable attacks. However, their potential impacts are significantly different. Cut $N(\mc{S}_2)$ only disconnects one other bus, namely $\mc{S}_2 = \{V_{2C}\}$ from the set of PMUs $\mc{M}$, and hence its potential impact equals $|N[\mc{S}_2]| = 3$. In comparison, cut $N(\mc{S}_1)$ disconnects all the vertices above $N(\mc{S}_1)$ from $\mc{M}$, and hence its potential impact equals $|N[\mc{S}_1]| \gg 3$. With this definition of potential impact, it is then natural for an attacker to \emph{seek the sparsest unobservable attack with the greatest potential impact}.

As an immediate byproduct of the analysis of potential impact, by letting $\mc{S} = \mc{M}^c$, we obtain the \emph{maximum} potential impact of all unobservable attacks in a power network:
\begin{cor}\label{maxcor}
For a connected power grid $\mc{G=\{N,L},\bm{w}\}$, given any $\mc{M}$ denoting the PMU locations, the maximum potential impact among all the unobservable attacks equals $|N[\mc{M}^c]|$.
\end{cor}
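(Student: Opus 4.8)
The plan is to prove the equality by establishing $|N[\mc{M}^c]|$ as both an upper bound and an achievable value for the potential impact $|N[\mc{S}]|$ as $N(\mc{S})$ ranges over all vulnerable vertex cuts of $\mc{G^M}$ (cf. Definition \ref{impactdef}).

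For the upper bound, I would simply invoke Notation \ref{ntn1} together with the three-way partition stated immediately after it: for every vertex cut $N(\mc{S})$, the disconnected component $\mc{S}$ contains no bus of $\bar{\mc{M}}$, i.e. $\mc{S}\subseteq\bar{\mc{M}}^c=\mc{M}^c$. Since the closed-neighborhood operation is monotone under set inclusion, $\mc{S}\subseteq\mc{M}^c$ gives $N[\mc{S}]\subseteq N[\mc{M}^c]$ and hence $|N[\mc{S}]|\le|N[\mc{M}^c]|$. As this holds for every vulnerable vertex cut, the potential impact of any unobservable attack is at most $|N[\mc{M}^c]|$. (One also notes that, because the augmentation only adds edges among $\bar{\mc{M}}$ while $\mc{S}$ is disjoint from $\bar{\mc{M}}$, the neighborhood $N[\mc{S}]$ is the same whether computed in $\mc{G}$ or in $\mc{G^M}$, so the counts are consistent with the statement phrased over $\mc{G}$.)

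For achievability, I would show that the inclusion-maximal choice $\mc{S}=\mc{M}^c$ is itself a valid vulnerable vertex cut, so that its impact $|N[\mc{M}^c]|$ is actually attained. Two things must be checked: (i) that $N(\mc{M}^c)$ genuinely disconnects $\mc{M}^c$ from $\bar{\mc{M}}$, and (ii) that this cut is vulnerable. For (i), removing the open neighborhood $N(\mc{M}^c)$ strips $\mc{M}^c$ of all its edges to the remainder of $\mc{G^M}$; since the dummy bus is adjacent only to $\mc{M}$ and therefore not to $\mc{M}^c$, the component $\mc{M}^c$ is left disconnected from every remaining bus of $\bar{\mc{M}}$, so $N(\mc{M}^c)$ is a cut of exactly the type in Notation \ref{ntn1}. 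For (ii), I would verify the vulnerability condition $|\mc{U}^c\cap N[\mc{M}^c]|\ge|N(\mc{M}^c)|+1$; using the disjoint decomposition $N[\mc{M}^c]=\mc{M}^c\cup N(\mc{M}^c)$, this holds in the unconstrained case $\mc{U}=\emptyset$ (with $\mc{M}^c\ne\emptyset$), where $|\mc{U}^c\cap N[\mc{M}^c]|=|\mc{M}^c|+|N(\mc{M}^c)|\ge|N(\mc{M}^c)|+1$, and more generally whenever enough buses of $N[\mc{M}^c]$ are alterable.

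I expect the main obstacle to be step (ii): the upper bound follows purely from the containment $\mc{S}\subseteq\mc{M}^c$ and is essentially immediate, whereas confirming that $\mc{S}=\mc{M}^c$ is \emph{vulnerable} rather than merely a vertex cut is the only place where the unalterable-bus set $\mc{U}$ enters and thus the only place the argument could break down. The real content of the corollary therefore lies in this achievability verification, combined with the observation that $\mc{S}=\mc{M}^c$ is the largest admissible disconnected component and so saturates the bound.
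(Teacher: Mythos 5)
Your argument is correct and follows the same route as the paper, which presents the corollary as an immediate byproduct of the potential-impact discussion obtained precisely "by letting $\mc{S}=\mc{M}^c$": the containment $\mc{S}\subseteq\mc{M}^c$ from Notation \ref{ntn1} gives the upper bound, and $\mc{S}=\mc{M}^c$ attains it. Your explicit check that $N(\mc{M}^c)$ is a genuine (and, given enough alterable buses, vulnerable) cut separating $\mc{M}^c$ from the dummy bus is a more careful rendering of what the paper leaves implicit, including the caveat about $\mc{U}$ that the paper glosses over.
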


\begin{figure}[tb]
  \centering
  \includegraphics[scale = 0.85]{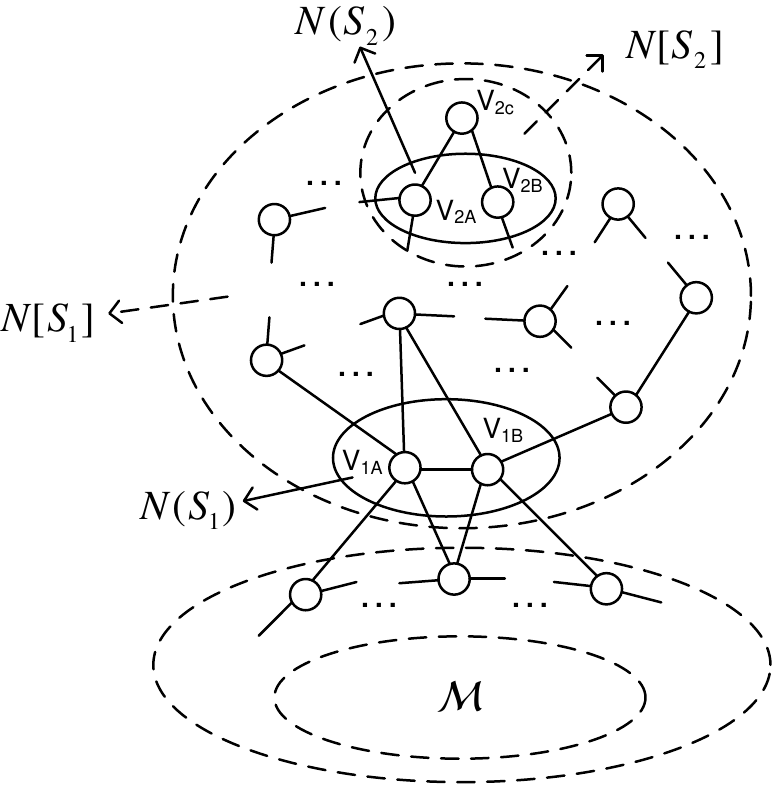}
  \caption{An illustration of two vulnerable vertex cuts with the same size but different potential impacts.}
  \label{impfig}
\end{figure}

\subsection{Lower bounding the sparsity of unobservable attacks}
We first define the following property of a matrix $\bm{H}\in\mbb{R}^{N\times N}$, which will be shown to be equivalent to having $\spr(\bm{H}) = N$.
\begin{ppt}[An equivalent condition for having a full structural rank] \label{cond1def}
\begin{align}
&\forall n = 1,2,\ldots, N, \text{ and for any } n\times N \text{ submatrix of }\bm{H}, \nn\\
&\text{the submatrix has at least $n$ columns each with at least} \nn\\ 
&\text{one non-zero entry.} \nn
\end{align}
\end{ppt}

We have the following lemma whose proof is relegated to Appendix \ref{secprfcond1}:
\begin{lem} \label{cond1equ}
Property \ref{cond1def} 
is equivalent to having $\spr(\bm{H}) = N$.
\end{lem}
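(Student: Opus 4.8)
The plan is to recast the statement as a bipartite-matching problem and invoke Hall's marriage theorem. First I would associate with $\bm{H}$ the bipartite graph $\mathcal{B}$ whose two vertex classes are the row indices $\{1,\ldots,N\}$ and the column indices $\{1,\ldots,N\}$, placing an edge between row $i$ and column $j$ exactly when $\bm{H}_{ij}\neq 0$. Under this correspondence a set of independent entries (Definition \ref{indent}) is precisely a matching in $\mathcal{B}$: the requirement that no two chosen nonzero entries share a row or a column is exactly the requirement that the corresponding edges share no endpoint. Consequently $\spr(\bm{H})$ equals the size of a maximum matching in $\mathcal{B}$, and since $\bm{H}$ is $N\times N$, having $\spr(\bm{H})=N$ is equivalent to $\mathcal{B}$ admitting a matching that saturates every row vertex, i.e.\ a perfect matching.

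Next I would identify Property \ref{cond1def} with Hall's condition on $\mathcal{B}$. For a subset $\mathcal{R}$ of the row indices, the columns that contain at least one nonzero entry within the rows of $\mathcal{R}$ are exactly the neighbors $N_{\mathcal{B}}(\mathcal{R})$ of $\mathcal{R}$ in the bipartite graph. Fixing $n=|\mathcal{R}|$, the $n\times N$ submatrix formed by the rows in $\mathcal{R}$ has at least $n$ columns with a nonzero entry if and only if $|N_{\mathcal{B}}(\mathcal{R})|\ge n=|\mathcal{R}|$. Since Property \ref{cond1def} quantifies over all $n$ and, for each $n$, over all $n\times N$ submatrices (equivalently, over all choices of $n$ rows), it is nothing other than the statement that $|N_{\mathcal{B}}(\mathcal{R})|\ge|\mathcal{R}|$ for every subset $\mathcal{R}$ of rows.

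With these two translations in hand, the lemma follows from Hall's marriage theorem: a bipartite graph has a matching saturating one vertex class if and only if that class satisfies Hall's condition. Reading this equivalence through the two correspondences above, $\spr(\bm{H})=N$ (perfect matching) holds if and only if Property \ref{cond1def} (Hall's condition) holds, which is the claim.

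The steps are routine, so I do not anticipate a serious obstacle; the only points requiring care are bookkeeping ones. I must verify both directions of the independent-entries/matching correspondence cleanly, and make sure Property \ref{cond1def} is quantified over \emph{all} row subsets of every size $1\le n\le N$ so that it matches Hall's condition without gaps --- in particular that ``any $n\times N$ submatrix'' ranges over all choices of $n$ rows. One should also note that the square shape $N\times N$ is what lets ``saturating the rows'' coincide with ``perfect matching,'' so that $\spr(\bm{H})=N$ is the correct target; this is the only place the squareness of $\bm{H}$ is used.
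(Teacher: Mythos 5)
Your proof is correct, and it takes a genuinely different route from the paper. You translate the statement into graph language once and for all: independent entries of $\bm{H}$ (Definition \ref{indent}) are matchings in the bipartite nonzero-pattern graph, so $\spr(\bm{H})=N$ means a perfect matching exists, while Property \ref{cond1def}, quantified over all row subsets of every size, is exactly Hall's condition on the row class; Hall's marriage theorem then closes the argument in one line. All three identifications you flag as needing care do hold: an ``$n\times N$ submatrix'' of an $N\times N$ matrix is indeed an arbitrary choice of $n$ rows with all columns, the columns of such a submatrix containing a nonzero entry are precisely the neighborhood of that row set, and squareness is what makes row-saturation the same as $\spr(\bm{H})=N$. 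The paper instead gives a self-contained elementary argument: it first establishes an auxiliary result (Lemma \ref{lemcond1}) for matrices with a staircase pattern of guaranteed nonzeros, and then proves Lemma \ref{cond1equ} by a double induction (on $N$, and within that on the number of rows $n$) together with a proof by contradiction, in effect re-deriving the relevant special case of Hall's theorem without naming it. Your version is shorter and more conceptual, and makes transparent why the paper's remark that structural rank is computable via maximum bipartite matching is the same fact in disguise; the paper's version buys self-containedness at the cost of a considerably longer induction. Either proof is acceptable.
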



We now prove the lower bounding part of Theorem \ref{mainthm}, namely, with probability one, all unobservable power injection attacks $\Delta \bm{P}$ must have $\Vert\Delta\bm{P}\Vert_0\ge \bar{\kappa}(\mc{G^M}) + 1$. 
The key idea 
is in showing that the equivalence between Property \ref{cond1def} and a full structural rank (cf. Lemma \ref{cond1equ}) implies a connection between the vulnerable vertex connectivity and the feasibility condition of unobservable attacks (cf. Lemma \ref{lemfeas1}).

\begin{proof}[Proof of $\Vert\Delta\bm{P}\Vert_0\ge \bar{\kappa}(\mc{G^M}) + 1$ for unobservable $\Delta \bm{P}$, w.p.1] 
We focus on $\mc{G^M}$ and consider its corresponding Laplacian $\bm{B}$.  
Suppose there exists a power injection attack $\Delta\bm{P}\ne\bm{0}$ such that
\begin{align}
\Delta\bm{\theta}_\mc{M} = \bm{0} 
\text{ and }\Vert\Delta\bm{P}\Vert_0\le{\bar{\kappa}(\mc{G^M})}. \label{proofasump2}
\end{align}


Denote the buses with non-zero power injection changes by $\mc{A}\subseteq\mc{U}^c$, and hence $\Delta\bm{P}_{\mc{A}^c} = \bm{0}$. From \eqref{proofasump2}, $|\mc{A}|\le \bar{\kappa}(\mc{G^M}), \Delta\bm{\theta}_{\mc{M}^c}\!\ne\!\bm{0}$, and $\bm{0} \!=\! \Delta\bm{P}_{\mc{A}^c} \!=\! \bm{B}_{\mc{A}^c\mc{M}^c}\Delta\bm{\theta}_{\mc{M}^c}$, 
implying that $\bm{B}_{\mc{A}^c\mc{M}^c}$ is column rank deficient. We first consider the case that a vulnerable vertex cut exists, i.e., $\bar{\kappa}(\mc{G^M}) < \infty$. The proof for the case of $\bar{\kappa}(\mc{G^M}) = \infty$ follows similarly. 
For notational simplicity, we will use $\bar{\kappa}$ instead of $\bar{\kappa}(\mc{G^M})$ in the remainder of the proof.  
\vspace{5pt}

\paragraph{If a vulnerable vertex cut exists, i.e., $\bar{\kappa} < \infty$}~

We will prove that, for all $\mc{A}\subseteq \mc{U}^c$ with $|\mc{A}|\le \bar{\kappa}$, \emph{$\bm{B}_{\mc{A}^c\mc{M}^c}$ is of full column rank with probability one}, i.e., \eqref{proofasump2} can only happen with probability zero. From Lemma \ref{kplem}, $\bar{\kappa}\le M$. 
It is then sufficient to prove for the ``worst cases'' 
with $|\mc{A}| = \bar{\kappa}=M$, i.e., $|\mc{A}^c| = |\mc{M}^c| = N-\bar{\kappa}$ and $\bm{B}_{\mc{A}^c\mc{M}^c}$ is a square matrix. 
From Theorem \ref{feasthm} and Lemma \ref{cond1equ}, it is sufficient to show that 
\emph{$\bm{B}_{\mc{A}^c\mc{M}^c}$ 
satisfies Property \ref{cond1def}}, and hence is of full rank with probability one.
Recall from the definition of the Laplacian $\bm{B}$ 
that, for any column (or row) of $\bm{B}$, $\bm{b}_i, (i=1,\ldots,N)$, its non-zero entries correspond to bus $i$ and those buses that are connected to bus $i$. With this, we now prove that $\bm{B}_{\mc{A}^c\mc{M}^c}$ satisfies Property \ref{cond1def}. 

Consider any set of $n$ ($\le N-\bar{\kappa}$) buses in $\mc{A}^c$, denoted by $\tilde{\mc{N}}$. 

i) If $\tilde{\mc{N}}\subseteq \mc{M}^c$: 
Based on the definition of the Laplacian $\bm{B}$, the $n$ columns of $\bm{B}_{\tilde{\mc{N}}\mc{M}^c}$ \emph{that correspond to the buses $\tilde{\mc{N}}$ themselves} each has at least one non-zero entry. 

ii) If $\tilde{\mc{N}}\cap \mc{M} \ne \emptyset$:  
We prove that $N(\tilde{\mc{N}})$ must contain at least $\bar{\kappa}$ buses. This is because, otherwise, $|N(\tilde{\mc{N}})|\le \bar{\kappa}-1$, contradicting that $\bar{\kappa}$ is the minimum size of vulnerable vertex cuts 
for the following reasons: 
\begin{enumerate}
\item $\mc{A}\subseteq\tilde{\mc{N}}^c$, and thus $\tilde{\mc{N}}^c$ has at least $|\mc{A}| = \bar{\kappa}$ alterable buses. 
\item $|N(\tilde{\mc{N}})|\le \bar{\kappa}-1$ implies that $\tilde{\mc{N}}^c\backslash N(\tilde{\mc{N}})\ne\emptyset$, and thus $N(\tilde{\mc{N}})$ is a vertex cut that separates $\tilde{\mc{N}}$ and $\tilde{\mc{N}}^c\backslash N(\tilde{\mc{N}})$. 
\item Because $\tilde{\mc{N}}\cap \mc{M} \ne \emptyset$ and $\mc{M}$ are pairwise connected in $\mc{G^M}$, $\mc{M}\subseteq N[\tilde{\mc{N}}]$. Thus, $\tilde{\mc{N}}^c\backslash N(\tilde{\mc{N}})$ and $\mc{M}$ are disjoint.  
\end{enumerate}
From 1), 3), and the fact that $|N(\tilde{\mc{N}})|\le \bar{\kappa}-1$, we observe that $N(\tilde{\mc{N}})$ is a \emph{vulnerable vertex cut} of size $\bar{\kappa}-1$, contradicting $\bar{\kappa}$ being the vulnerable vertex connectivity. 

Now, based on the definition of the Laplacian $\bm{B}$, 
the $n\times N$ submatrix \emph{$\bm{B}_{\tilde{\mc{N}}\mc{N}}$ must have at least $n + \bar{\kappa}$ columns each of which has at least one non-zero entry for the following reasons:}
\begin{itemize}
\item The $n$ columns of $\bm{B}_{\tilde{\mc{N}}\mc{N}}$ that correspond to the buses $\tilde{\mc{N}}$ themselves each has at least one non-zero entry.
\item As $\tilde{\mc{N}}$ are connected to at least $\bar{\kappa}$ other buses, each one of these $\bar{\kappa}$ neighbors of $\tilde{\mc{N}}$ corresponds to one column of $\bm{B}_{\tilde{\mc{N}}\mc{N}}$ that has at least one non-zero entry.
\end{itemize}
Accordingly, the $n\times (N-{\bar{\kappa}})$ submatrix $\bm{B}_{\tilde{\mc{N}}\mc{M}^c}$ has at least $n$ columns each of which has at least one non-zero entry. 

Summarizing i) and ii), $\bm{B}_{\mc{A}^c\mc{M}^c}$ satisfies Property \ref{cond1def}, and is thus of full column rank with probability one. Therefore, \eqref{proofasump2} can only happen with probability zero. 
\vspace{5pt}

\paragraph{If no vulnerable vertex cut exists, i.e., $\bar{\kappa} = \infty$}

If $\mc{M}=\mc{N}$, i.e., all buses have PMUs, then clearly no unobservable attack exists. We now focus on $M\le N-1$. Suppose $|\mc{A}|\ge M+1$. Consider the set $\bar{M}$ containing $\mc{M}$ and the dummy bus. $\Delta\bm{\theta}_\mc{M} = \bm{0}$ (cf. \eqref{proofasump2}) implies that $\mc{A} \subseteq N[\bar{\mc{M}}^c]$, and thus $N[\bar{\mc{M}}^c]$ has at least $|\mc{A}| \ge M+1$ alterable buses. Since $\mc{M} ~(= N(\bar{\mc{M}}^c))$ separates the dummy node and $\mc{N}\backslash \mc{M}$, $\mc{M}$ is a \emph{vulnerable vertex cut}. This contradicts the nonexistence of a vulnerable vertex cut. 
Therefore, $|\mc{A}|\le M$. In this case, the same proof as in the above case i) when a vulnerable vertex cut exists applies, and \eqref{proofasump2} can only happen with probability zero. 
\end{proof}
%


With the proofs of upper and lower bounds, we have now proved Theorem \ref{mainthm}. In addition, from the proof of Theorem \ref{upthm}, we have a \emph{constructive solution} of the sparsest unobservable attack in polynomial time. We conclude this section by noting the following fact similar to that in Section \ref{secfeas}: the minimum sparsity 
of unobservable attacks is fully determined with probability one by the \emph{network topology, the locations of the alterable buses, and the locations of the PMUs}.

\begin{table*}[t]
\normalsize
\caption{Algorithm 1} \label{alg1table}
\centering
\begin{tabular}[c]{@{} l @{} l @{} p{8.0cm} @{}}
\multicolumn{3}{@{}c}{~Greedy algorithm for PMU placement for countering power injection attacks }\\
\hline
\multicolumn{3}{@{}l}{~~Place the $1^{st}$ PMU at bus 1.}\\ 
\multicolumn{3}{@{}l}{~~Repeat}\\
~~~~~&If no unobservable attack exists given the current set of PMUs $\mc{M}$, stop. \\
~~~~~&Step 1: Find all the minimum vulnerable vertex cuts of $\mc{G}^{N[\mc{M}]}$; \\ 
~~~~~&~~~~~~~~ among them, find the cut with the greatest potential impact, \\
~~~~~&~~~~~~~~ denoted by $C(\mc{G}^{N[\mc{M}]})$.\\
~~~~~&Step 2: Among all the buses disconnected from $N[\mc{M}]$ by $C(\mc{G}^{N[\mc{M}]})$ \\
~~~~~&~~~~~~~~ as well as those in the cut set $C(\mc{G}^{N[\mc{M}]})$, place the next PMU \\
~~~~~&~~~~~~~~ at the one such that the resulting maximum potential impact \\
~~~~~&~~~~~~~~ among all the remaining unobservable attacks is minimized. \\
\hline
\end{tabular}
\end{table*}

\section{Numerical evaluation} \label{secnum} 
In this section, we evaluate the sparsest unobservable attacks and their potential impacts when the system operator deploys PMUs at optimized locations. We first provide an efficient algorithm for optimizing PMU placement by the system operator. Next, we provide comprehensive evaluation of our analysis and algorithms in multiple IEEE power system test cases as well as large-scale Polish power systems. Our MATLAB codes are openly available for download\footnote{The codes can be found at \url{http://www.princeton.edu/~yuez/pubs.html}}. 

\subsection{Optimization of PMU placement for attack detection}
We have seen in Section \ref{secmin} that the minimum sparsity and potential impacts of unobservable attacks are determined fully by the network topology, the locations of the alterable buses, and the PMU placement. Note that, unlike network states and parameters which can vary over short and medium time scales, the transmission network topology 
and the alterable buses typically stay the same over relatively long time scales. This motivates the system operator to optimize the PMU placement according to this information. 

For the best performance in countering power injection attacks, the system operator wants to \emph{raise the minimum sparsity of unobservable attacks, as well as mitigate the maximum potential impact of unobservable attacks}. Algorithm 1 (cf. Table \ref{alg1table}) is developed for the system operator to greedily place PMUs to pursue both objectives. In this algorithm, we have assumed that the \emph{second PMU model} in Section \ref{secsensemod} is employed, and the algorithm can be adapted to the first PMU model by replacing $N[\mc{M}]$ with $\mc{M}$.


Algorithm 1 is essentially a successive cut/attack elimination procedure. The purpose of Step 1 is to identify the sparsest unobservable attack with the greatest potential impact. 
Specifically, Step 1 can be performed as follows: 
\begin{enumerate}
\item Assign arbitrarily one of the buses in $\mc{M}$ as the \emph{source} node;
\item For each of the buses in $\mc{N}\backslash N[\mc{M}]$, assign it as the \emph{destination} node, and compute all the minimum vulnerable vertex cuts that separate such a source-destination pair. 
\item Among all the computed source-destination vertex cuts that have the same minimum size, compute their corresponding potential impacts, and select the minimum vertex cut with the greatest potential impact, denoted by $C(\mc{G}^{N[\mc{M}]})$.
\end{enumerate}
We note that all the minimum vulnerable vertex cuts can be enumerated in polynomial time (c.f. \cite{vazirani1992suboptimal}). 
In our numerical evaluation using MATLAB on a laptop with Intel Core i7 3.1-GHz CPU and 8 GB of RAM, it takes less than $0.2$ seconds on average for every PMU placed for the IEEE 300 bus systems. This per-PMU time increases to about 50 seconds for the Polish 3012 bus system. 
In Step 2, our primary goal is to ensure that the cut set $C(\mc{G}^{N[\mc{M}]})$ found in Step 1 \emph{does not remain a legitimate vertex cut} after placing the next PMU. This can be achieved by placing the next PMU among the buses disconnected from $N[\mc{M}]$ by $C(\mc{G}^{N[\mc{M}]})$ as well as those in $C(\mc{G}^{N[\mc{M}]})$. Among such candidate buses, we choose the one that renders the \emph{minimum} maximum potential impact among all the remaining unobservable attacks (cf. Corollary \ref{maxcor}) had the next PMU been placed at it.


\subsection{Numerical evaluation of unobservable attacks vs. number of PMUs}
We 
evaluate our results in the IEEE 30-bus, IEEE 57-bus, IEEE 118-bus, IEEE 300-bus, Polish 2383-bus, Polish 2737-bus, and Polish 3012-bus systems. The evaluation is performed based on the software toolbox MATPOWER \cite{MatP11}. In each of these systems, we apply Algorithm 1 to generate a set of PMU locations greedily, with the number of PMUs $M$ increasing from one until all attacks become observable. Moreover, from Algorithm 1, for all $M$, the minimum sparsity of unobservable attacks as well as the maximum potential impact among the sparsest unobservable attacks are found (cf. Step 1 in Algorithm 1). We assume that all buses are alterable in the test cases.

In general, for a given set of PMUs, one can also search for the maximum potential impact among all $s$-sparse unobservable attacks for any given sparsity $s$, (as opposed to evaluate that among the sparsest attacks only as in Algorithm 1). However, this problem is NP-hard in $s$. 
In light of this, we selectively focused on some level of sparsity of unobservable attacks that is \emph{not} minimally sparse, and evaluated their maximum potential impacts. 

\begin{figure}[tb]
  \centering
  \includegraphics[scale = 0.45]{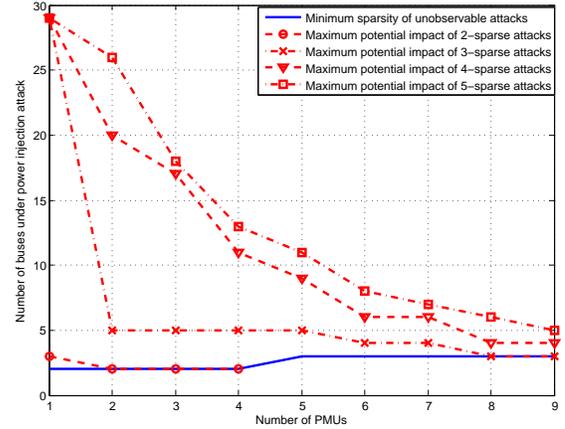}
  \caption{Minimum sparsity of unobservable attacks and maximum potential impacts of 2, 3, 4, 5-sparse attacks as functions of $M$, IEEE 30-bus system.}
  \label{eval30}
\end{figure}

\begin{figure}[tb]
  \centering
  \includegraphics[scale = 0.45]{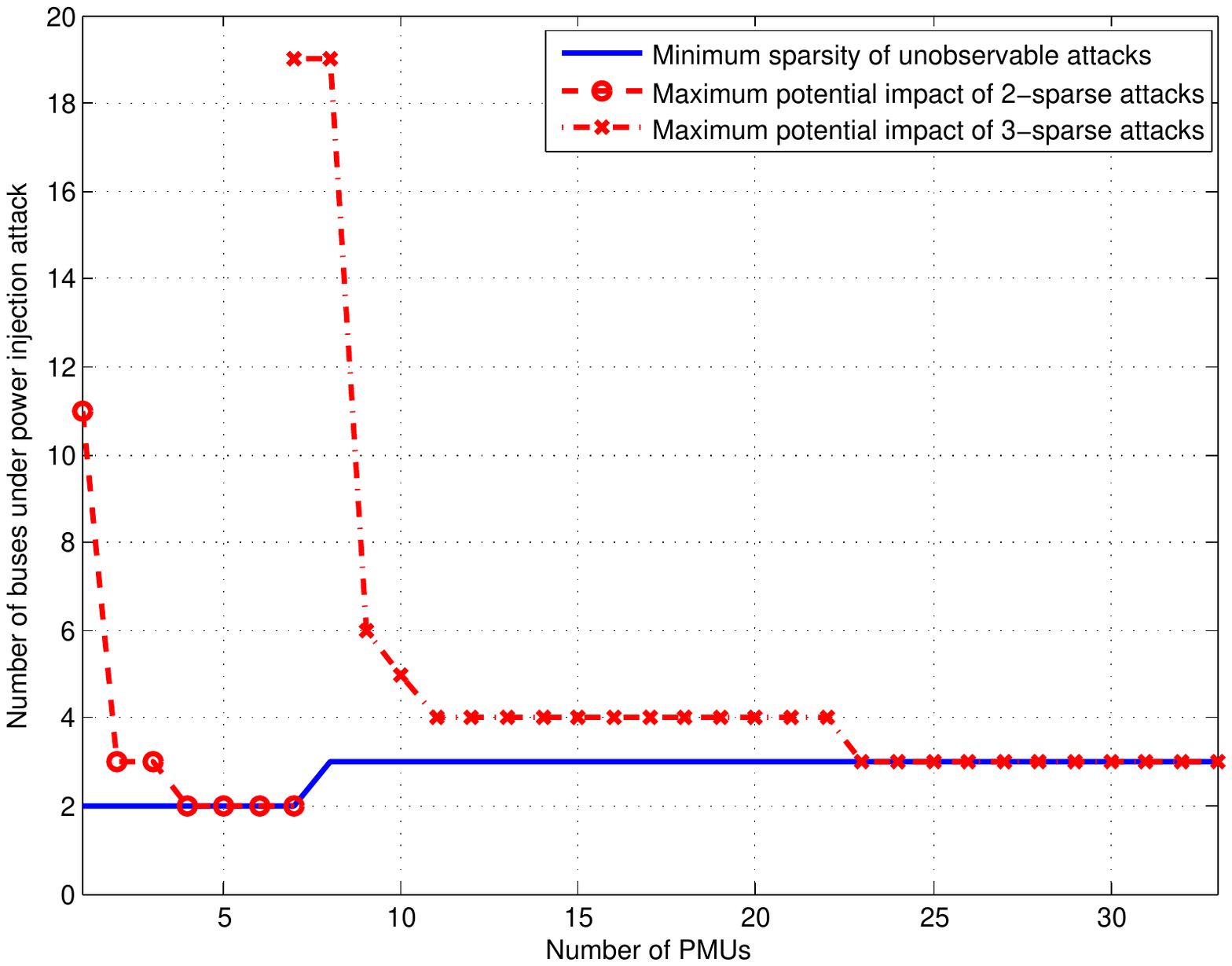}
  \caption{Minimum sparsity of unobservable attacks and the maximum potential impacts of the sparsest attacks as functions of $M$, IEEE 118-bus system.}
  \label{eval118}
\end{figure}

\begin{figure}[tb]
  \centering
  \includegraphics[scale = 0.45]{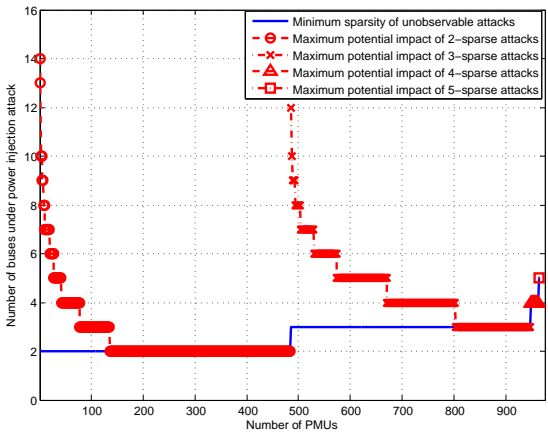}
  \caption{Minimum sparsity of unobservable attacks and the maximum potential impacts of the sparsest attacks as functions of $M$, Polish 3012-bus system.}
  \label{eval3012}
\end{figure}

Specifically, the minimum sparsity of unobservable attacks and the maximum potential impact among these sparsest attacks both as functions of the number of PMUs $M$ are plotted for the IEEE 30 and 118-bus power systems and the Polish 3012-bus system, in Figures \ref{eval30}, \ref{eval118} and \ref{eval3012} respectively. In addition,
\begin{itemize}
\item For the IEEE 30-bus system, the maximum potential impact among all 2-sparse, 3-sparse, 4-sparse and 5-sparse unobservable attacks for the entire range of $M$ are plotted. (Note that the minimum sparsity of unobservable attacks does not exceed 3 for all $M$).
\item For the IEEE 118-bus system, the maximum potential impact among all 3-sparse attacks when $M\ge7$ is plotted. (Note that for $M=7$ the minimum sparsity of unobservable attacks is 2).
\end{itemize}

We make the following observations which appear in all seven of the evaluated systems:
\begin{itemize}
\item In all seven systems, all the attacks become observable with \emph{less than a third} of the buses installed with PMUs (assuming the second PMU model). The average percentage of the number of PMUs needed to have full network observability equals $31.1\%$. While this number resembles a well-known estimate of such percentage to be one third \cite{obs93}, it also demonstrates the efficacy of Algorithm 1 in PMU placement.
\item The topologies of the tested power systems tend to allow sparse power injection attacks. In other words, the vertex connectivity of these power networks is often small. Furthermore, there are often many unobservable attacks with the same minimum sparsity: 
this is why even after adding a lot more PMUs into the network, with each addition eliminating the previous sparsest attack, the minimum sparsity of an unobservable attack 
can still remain the same.
\item While there are many 
unobservable attacks with the same sparsity, the potential impacts among them can vary significantly. Moreover, as more PMUs are added, the maximum potential impact among all the sparsest unobservable attacks drops quickly until it reaches the minimum sparsity. Similar behavior is demonstrated for all the $s$-sparse unobservable attacks ($s=2,3,4,5$) for the IEEE 30-bus system as shown in Figure \ref{eval30}. 
\end{itemize}

\section{Conclusion} \label{secconc}
We have studied physical attacks that alter power generation and loads in power networks while remaining unobservable under the surveillance of system operators using PMUs. Given a set of PMUs, we have first shown that the existence of an unobservable attack that is restricted to any given subset of the buses can be determined with probability one by computing the structural rank of a submatrix of the network Laplacian $\bm{B}$. Next, we have provided an explicit solution to the open problem of finding the sparsest unobservable attacks: the minimum sparsity among all unobservable attacks equals $\kappa(\mc{G^M})+1$ with probability one. 
The constructive solution allows us to find all the sparsest unobservable attacks in polynomial time. 
As a result, $\kappa(\mc{G^M})+1$ is a fundamental limit of this minimum sparsity that is not only explicitly attainable, but also unbeatable by all possible unobservable attacks. We have then introduced a notion of potential impacts of unobservable attacks. 
For the system operator to raise the minimum sparsity while simultaneously mitigating the maximum potential impact of all unobservable attacks, we have devised an efficient algorithm of greedily placing the PMUs.
With optimized PMU deployment, 
we have evaluated the sparsest unobservable attacks and their potential impacts 
in the IEEE 30, 57, 118, 300-bus systems and the Polish 2383, 2737, 3012-bus systems. Finally, while this work has studied a static system model and power injection attacks, extension to dynamic systems, measurements and power injection attacks remains an interesting future direction, for which we expect that similar insights will apply. 

{\small
\appendices

\section{Proof of Lemma \ref{lemL}} \label{secprflemL}
\begin{proof}[Proof of Lemma \ref{lemL}]
First, we denote the Laplacian of the \emph{induced subgraph} $\mc{G}[\mc{I}]$ by $\bm{L}_\mc{I}$. Denote the number of connected components of the induced subgraph $\mc{G}[\mc{I}]$ by $c$. By properly re-indexing the nodes, we have
\begin{align}
\bm{B}_\mc{II} = \bm{L}_\mc{I} + \bm{D}_\mc{I},
\end{align}
where $\bm{L}_\mc{I}$ is a block-diagonal matrix whose each block $\bm{L}_\mc{I}^j~(1\le j\le c)$ is \emph{positive semidefinite} and corresponds to one connected component of $\mc{G}[\mc{I}]$,
\begin{align}
\bm{L}_\mc{I} =
\begin{bmatrix}
\bm{L}_\mc{I}^1 &&& \\
&\bm{L}_\mc{I}^2 && \\
&&\cdots&\\
&&&\bm{L}_\mc{I}^c
\end{bmatrix},
\end{align}
and $\bm{D}_\mc{I}$ is diagonal, which we write in a block diagonal form whose each block $\bm{D}_\mc{I}^j~(1\le j\le c)$ is itself a diagonal matrix with \emph{non-negative} entries,
\begin{align}
\bm{D}_\mc{I} =
\begin{bmatrix}
\bm{D}_\mc{I}^1 &&& \\
&\bm{D}_\mc{I}^2 && \\
&&\cdots&\\
&&&\bm{D}_\mc{I}^c
\end{bmatrix}.
\end{align}

Since the original graph $\mc{G}$ is connected, each connected component of the induced subgraph $\mc{G}[\mc{I}]$ must be connected to at least one node in $\mc{N}\backslash\mc{I}$. This implies the following fact:

\begin{fct}
Each diagonal submatrix $\bm{D}_\mc{I}^j~(1\le j\le c)$ has at least one strictly positive diagonal entry.
\end{fct}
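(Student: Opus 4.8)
The plan is to interpret the diagonal entries of $\bm{D}_\mc{I}$ graph-theoretically and then argue by contradiction using the connectedness of $\mc{G}$. First I would make explicit what $\bm{D}_\mc{I}$ is. Since $\bm{B}$ is a Laplacian, its $i$-th diagonal entry $\bm{B}_{ii}=\sum_{j\sim i}w_{ij}$ sums the weights of \emph{all} edges incident to $i$ in $\mc{G}$, whereas the induced Laplacian $\bm{L}_\mc{I}$ only accounts for edges incident to $i$ that stay inside $\mc{G}[\mc{I}]$; the off-diagonal entries of $\bm{B}_\mc{II}$ and $\bm{L}_\mc{I}$ agree (both equal $-w_{ij}$ for adjacent $i,j\in\mc{I}$), so $\bm{D}_\mc{I}=\bm{B}_\mc{II}-\bm{L}_\mc{I}$ is indeed diagonal, and its $i$-th entry equals the total weight of edges from $i$ to nodes in $\mc{N}\backslash\mc{I}$. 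Because every edge weight is strictly positive, this entry is strictly positive precisely when $i$ has at least one neighbor in $\mc{N}\backslash\mc{I}$.

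With this interpretation, the fact reduces to a purely combinatorial claim: each connected component $C_j$ of $\mc{G}[\mc{I}]$ contains at least one node that is adjacent in $\mc{G}$ to some node of $\mc{N}\backslash\mc{I}$. I would prove this by contradiction. Suppose some component $C_j$ had no such node. Then no node of $C_j$ is joined to a node outside $\mc{I}$; moreover, by the very definition of a connected component of $\mc{G}[\mc{I}]$, no node of $C_j$ is joined to a node of $\mc{I}\backslash C_j$ either. Hence $C_j$ has no edges leaving it in the full graph $\mc{G}$, so $C_j$ is itself a maximal connected component of $\mc{G}$. But the hypothesis $|\mc{I}|\le N-1$ forces $\mc{N}\backslash\mc{I}\ne\emptyset$, so there are nodes outside $C_j$, making $\mc{G}$ disconnected and contradicting the standing assumption that $\mc{G}$ is connected.

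Consequently some node $i\in C_j$ has a neighbor outside $\mc{I}$, and by the first paragraph the corresponding diagonal entry of the block $\bm{D}_\mc{I}^j$ is strictly positive; since $j$ was arbitrary, this establishes the fact for every component. I do not anticipate a genuine obstacle here, as the argument uses nothing beyond the Laplacian structure recorded in Section \ref{secfeas} and the connectedness assumed in Lemma \ref{lemL}. The only point requiring care is the role of the hypothesis $|\mc{I}|\le N-1$: it is exactly what guarantees $\mc{N}\backslash\mc{I}$ is nonempty, and the fact genuinely fails when $\mc{I}=\mc{N}$ (where $\bm{D}_\mc{I}=\bm{0}$), consistent with the singularity of the full graph Laplacian.
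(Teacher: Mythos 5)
Your proposal is correct and follows essentially the same route as the paper: the paper also observes that $\bm{D}_\mc{I}$ records, for each node of $\mc{I}$, the total weight of its edges leaving $\mc{I}$, and deduces the fact from the connectedness of $\mc{G}$, which forces every connected component of $\mc{G}[\mc{I}]$ to have a neighbor in the nonempty set $\mc{N}\backslash\mc{I}$. You merely spell out the contradiction argument and the role of $|\mc{I}|\le N-1$ that the paper leaves implicit, both of which are accurate.
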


Now, for any non-zero vector $\bm{x}_\mc{I}\in\mbb{R}^I$, we write it as a concatenation of $c$ sub-vectors:
\begin{align}
\bm{x}_\mc{I} = [[\bm{x}_\mc{I}^1]^T~ [\bm{x}_\mc{I}^2]^T \ldots [\bm{x}_\mc{I}^c]^T]^T,
\end{align}
where the length of each sub-vector $\bm{x}_\mc{I}^j~(1\le j\le c)$ follows the size of the sub-matrix $\bm{L}_\mc{I}^j$.

As $\bm{L}_\mc{I}$ is positive semidefinite, $\bm{x}_\mc{I}^T\bm{L}_\mc{I}\bm{x}_\mc{I} \ge 0$:
\begin{enumerate}
\item If $\bm{x}_\mc{I}^T\bm{L}_\mc{I}\bm{x}_\mc{I} > 0$, then immediately $\bm{x}_\mc{I}^T\bm{B}_\mc{II}\bm{x}_\mc{I} > 0$.
\item If $\bm{x}_\mc{I}^T\bm{L}_\mc{I}\bm{x}_\mc{I} = 0$, then $\bm{L}_\mc{I}\bm{x}_\mc{I} = 0$, which implies
\begin{align}
\bm{L}_\mc{I}^j\bm{x}_\mc{I}^j = 0, ~\forall j=1,2,\ldots,c.
\end{align}
Namely, $\bm{x}_\mc{I}^j$ is in the null space of $\bm{L}_\mc{I}^j$. Note that as $\bm{L}_\mc{I}^j$ corresponds to a single connected component of $\mc{G}[\mc{I}]$, the dimension of the null space of $\bm{L}_\mc{I}^j$ is one, and is spanned by the all one vector $\bm{1} = [1, 1, \ldots, 1]^T$ with the appropriate length. Thus, $\bm{x}_\mc{I}^j$ must be in the form of $\alpha_j\cdot\bm{1}$, for some $\alpha_j>0$. From Fact 1, $\bm{D}_\mc{I}^j$ has non-negative diagonal entries with at least one of them strictly positive, and we have $[\bm{x}_\mc{I}^j]^T \bm{D}_\mc{I}^j\bm{x}_\mc{I}^j > 0$, and hence $\bm{x}_\mc{I}^T(\bm{B}_\mc{II})\bm{x}_\mc{I} = \bm{x}_\mc{I}^T(\bm{L}_\mc{I}+\bm{D}_\mc{I})\bm{x}_\mc{I} > 0$.
\end{enumerate}

Therefore, $\bm{B}_\mc{II}$ is positive definite, and hence of full rank.
\end{proof}

\section{Proof of Theorem \ref{feasthm}} \label{secprffeasthm}
First, for a matrix $\bm{H}\in\mbb{R}^{N_1\times N_2}$ with a full structural rank, we define an equivalent term, ``a non-zero permuted diagonal'', for a set of $\min(N_1, N_2)$ independent entries (cf. Definition \ref{indent}). This term is based on the following intuition: For example, for $\bm{H}\in\mbb{R}^{N\times N}$, a non-zero permuted diagonal (i.e., a set of $N$ independent entries) corresponds to a permutation function $\pi(i), i=1,2,\ldots,N$, such that $\bm{H}_{i,\pi(i)} \ne 0, \forall i=1,2,\ldots,N$.

\begin{proof}[Proof of Theorem \ref{feasthm}]
It is sufficient to prove for the case of $N' = N'' \le N-1$.
We use induction as follows.

i) Clearly, any non-zero $1\times1$ submatrix of $\bm{B}$ is of full rank.

ii) Assume that all $t\times t~ (t\le N-2)$ submatrices of $\bm{B}$ with a non-zero permuted diagonal are of full rank with probability one.

For a $(t+1)\times (t+1)$ submatrix of $\bm{B}$ with a non-zero permuted diagonal, we denote it by $\bm{B}'$. We denote the set of row indices of $\bm{B}$ that are selected in forming $\bm{B}'$ by $\mc{R} = \{r(1),r(2),\ldots,r(t+1)\}$, and similarly the set of selected column indices by $\mc{C} = \{c(1),c(2),\ldots,c(t+1)\}$: $\bm{B}'_{i,j} = \bm{B}_{r(i),c(j)}, \forall 1\le i,j\le t+1.$
Clearly, if 
$\mc{R} = \mc{C}$, $\bm{B}'$ is of full rank from Lemma \ref{lemL}.

\begin{figure}[tb!]
  \centering
  \subfigure[Partition of the matrix $\bm{B}'$.]{ 
  \includegraphics[scale = 0.9]{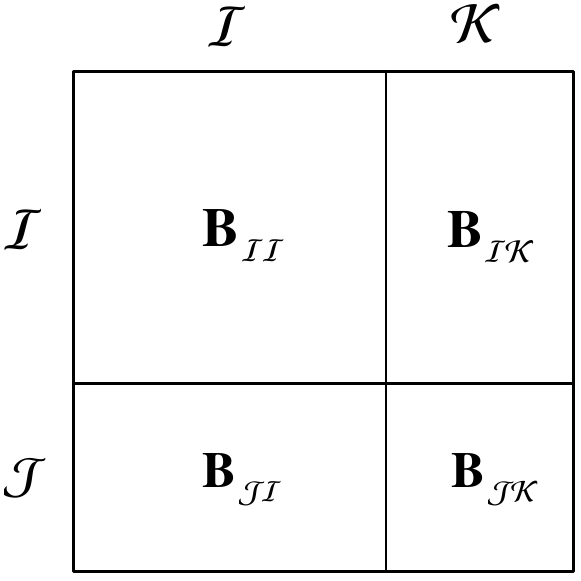}
  \label{IJK11}
  }
  \subfigure[Case 1, $\bm{B}'_{t+1,t+1}\in\bm{B}_{\mc{JK}}$ is on a non-zero permuted diagonal of $\bm{B}'$.]{
  \includegraphics[scale = 0.9]{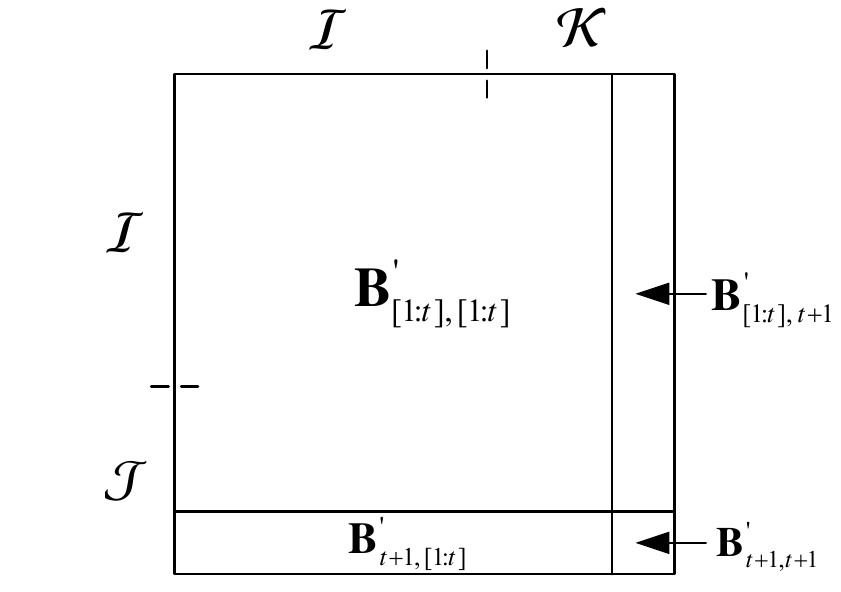}
  \label{IJK12}
  }
  \caption{Proof of Theorem \ref{feasthm}.}
  \label{mainlemcase1}
\end{figure}

Now, consider the case that 
$\mc{R} = \mc{I}\cup\mc{J}, ~ \mc{C} = \mc{I}\cup\mc{K}$, 
where $\mc{I}\cap\mc{J} = \mc{I}\cap\mc{K} = \emptyset, ~\mc{J}\cap\mc{K} = \emptyset,~ \mc{J},\mc{K}\ne\emptyset$. 
In other words, $\mc{I}$ denotes the common indices that appear in both the row indices $\mc{R}$ and the column indices $\mc{C}$, $\mc{J}$ denotes the indices that appear in $\mc{R}$ but not in $\mc{C}$, and $\mc{K}$ denotes the indices that appear in $\mc{C}$ but not in $\mc{R}$.
WLOG, $\bm{B}'$ has the form as in Figure \ref{IJK11}, in which the common row and column indices $\mc{I}$ are located in the upper left part of $\bm{B}'$, and $\bm{B}'$ consists of four blocks $\bm{B}_{\mc{II}}, \bm{B}_{\mc{JI}}, \bm{B}_{\mc{IK}}, \bm{B}_{\mc{JK}}$.

Since $\bm{B}'$ has a non-zero permuted diagonal, there exists a permutation function $\pi(i), i =1,2,\ldots,t+1$, such that $\bm{B}'_{i,\pi(i)}>0, \forall i=1,\ldots,t+1$. In other words, the mapping $r(i)\rightarrow c(\pi(i)), i =1,2,\ldots,t+1$ forms a bijection between $\mc{I}\cup\mc{J}$ and $\mc{I}\cup\mc{K}$, such that $\bm{B}_{r(i),c(\pi(i))}>0, \forall i = 1,2,\ldots,t+1.$
We now consider the following two cases:

\emph{Case 1:} $\exists r(i)\in\mc{J}, ~s.t.~ c(\pi(i))\in\mc{K}.$ In other words, one of the entries in $\bm{B}_{\mc{JK}}$ is on a non-zero permuted diagonal of $\bm{B}'$.

WLOG, assume that the entry $\bm{B}'_{t+1,t+1}$ in $\bm{B}_{\mc{JK}}$ is on a non-zero permuted diagonal of $\bm{B}'$, i.e., $\pi(t+1) = t+1$. As in Figure \ref{IJK12}, we partition $\bm{B}'$ into $\bm{B}'_{[1:t], [1:t]}, \bm{B}'_{t+1, [1:t]}, \bm{B}'_{[1:t], t+1}$ and $\bm{B}'_{t+1,t+1}$. Because $\bm{B}'_{t+1,t+1}$ is on a non-zero permuted diagonal of $\bm{B}'$, the $t\times t$ submatrix $\bm{B}'_{[1:t], [1:t]}$ has a non-zero permuted diagonal.

From the induction assumption, $\bm{B}'_{[1:t], [1:t]}$ is of full rank with probability one. When $\bm{B}'_{[1:t], [1:t]}$ is of full rank, let
\begin{align}
\bm{\alpha} = {\bm{B}'}^{-1}_{[1:t], [1:t]}\bm{B}'_{[1:t], t+1}.
\end{align}
Thus, $\bm{B}'_{[1:t], t+1} = \bm{B}'_{[1:t], [1:t]}\bm{\alpha}$. Then, $\bm{B}'$ is rank-deficient if and only if
\begin{align}
\bm{B}'_{t+1,t+1} = \bm{B}'_{t+1, [1:t]} \bm{\alpha} \label{rankdef1}
\end{align}

Note that, except for $\bm{B}_{r(t+1),c(t+1)} = \bm{B}'_{t+1,t+1}$ itself, there are only three other entries in the Laplacian $\bm{B}$ that are correlated with $\bm{B}'_{t+1,t+1}$:
\begin{align}
\bm{B}_{c(t+1),r(t+1)} = \bm{B}_{r(t+1),c(t+1)} = \bm{B}'_{t+1,t+1}, \\
\bm{B}_{r(t+1),r(t+1)} = -\sum_{j\ne r(t+1)} \bm{B}_{r(t+1), j}, \\
\bm{B}_{c(t+1),c(t+1)} = -\sum_{i\ne c(t+1)} \bm{B}_{i, c(t+1)}.
\end{align}
However, as $r(t+1)\in\mc{J}\Rightarrow r(t+1)\notin\mc{I}\cup\mc{K},~ c(t+1)\in\mc{K}\Rightarrow c(t+1)\notin\mc{I}\cup\mc{J}$, \emph{none} of the above three entries is selected into the submatrix $\bm{B}'$. Therefore, \emph{$\bm{B}'_{t+1,t+1}$ is independent to all other entries in $\bm{B}'$}, and is hence independent to $\bm{B}'_{t+1, [1:t]} \bm{\alpha}$. Because $\bm{B}'_{t+1,t+1}$ is drawn from a continuous distribution, the probability that \eqref{rankdef1} is satisfied is zero. As a result, $\bm{B}'$ is of full rank with probability one.
\vspace{10pt}

\emph{Case 2:} $\forall r(i)\in\mc{J}, ~ c(\pi(i))\notin\mc{K}.$ Thus, $\exists r(i)\in\mc{I},~s.t.~c(\pi(i))\in\mc{K}$. In other words, one of the entries in $\bm{B}_{\mc{IK}}$ is on a non-zero permuted diagonal of $\bm{B}'$.

\begin{figure}[tb!]
  \centering
  \subfigure[Case 2, $\bm{B}'_{1,t+1}\in\bm{B}_{\mc{IK}}$ is on a non-zero permuted diagonal of $\bm{B}'$.]{
  \includegraphics[scale = 0.9]{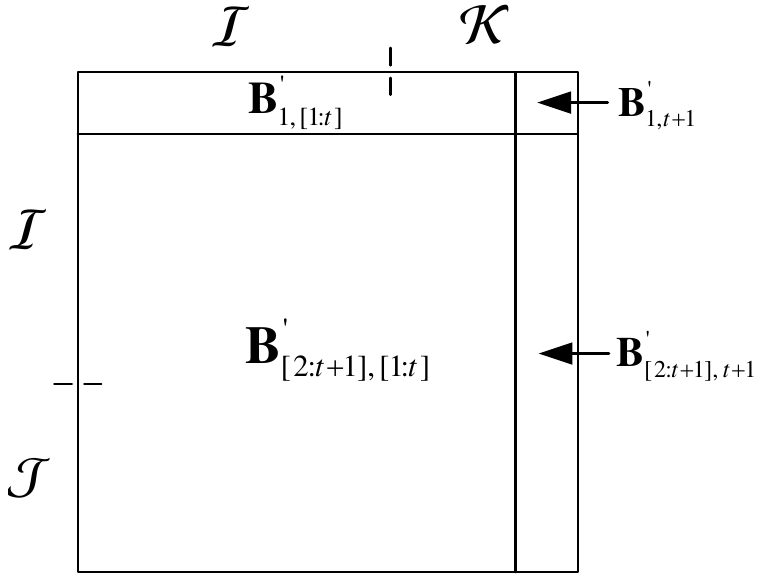}
  \label{IJK21}
  }
  \subfigure[Case 2, $\bm{B}'_{i, i}$ depends on $\bm{B}'_{i, 1},\bm{B}'_{i, t+1}$ only via the \emph{sum} $\bm{B}'_{i, 1} + \bm{B}'_{i, t+1}$.]{
  \includegraphics[scale = 0.9]{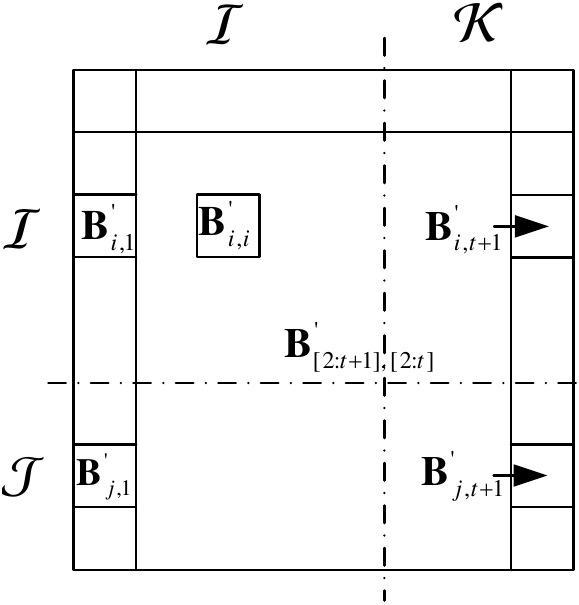}
  \label{IJK22}
  }
  \caption{Proof of Theorem \ref{feasthm}.}
  \label{mainlemcase2}
\end{figure}

WLOG, assume that the entry $\bm{B}'_{1,t+1}$ in $\bm{B}_{\mc{IK}}$ is on a non-zero permuted diagonal of $\bm{B}'$, i.e., $\pi(1) = t+1$. As in Figure \ref{IJK21}, we partition $\bm{B}'$ into $\bm{B}'_{1,[1:t]}, \bm{B}'_{[2:t+1], [1:t]}, \bm{B}'_{1, t+1}$ and $\bm{B}'_{[2:t+1],t+1}$. Because $\bm{B}'_{1,t+1}$ is on a non-zero permuted diagonal of $\bm{B}'$, the $t\times t$ submatrix $\bm{B}'_{[2:t+1], [1:t]}$ has a non-zero permuted diagonal.

From the induction assumption, $\bm{B}'_{[2:t+1], [1:t]}$ is of full rank with probability one. When $\bm{B}'_{[2:t+1], [1:t]}$ is of full rank, let
\begin{align}
\bm{\alpha} = {\bm{B}'}^{-1}_{[2:t+1], [1:t]}\bm{B}'_{[2:t+1], t+1}. \label{alphadef}
\end{align}
Thus, $\bm{B}'_{[2:t+1], t+1} = \bm{B}'_{[2:t+1], [1:t]}\bm{\alpha}$. Then, $\bm{B}'$ is rank-deficient if and only if
\begin{align}
\bm{B}'_{1,t+1} = \bm{B}'_{1, [1:t]} \bm{\alpha} \label{rankdef2}
\end{align}

Note that 
$r(1) = c(1)\in\mc{I}$. We have
\begin{align}
\bm{B}'_{1,1} &= \bm{B}_{r(1),c(1)} = \bm{B}_{r(1),r(1)} = -\sum_{j\ne r(1)}\bm{B}_{r(1),j} \nn\\
&= - \bm{B}'_{1,t+1} - C_1, \label{bp11}
\end{align}
where $\bm{B}'_{1,t+1} = \bm{B}_{r(1),c(t+1)}$, and $C_1 = \sum_{j\ne r(1), j\ne c(t+1)}\bm{B}_{r(1),j}$ is \emph{independent} to $\bm{B}'_{1,t+1}$. Substitute \eqref{bp11} for $\bm{B}'_{1,1}$ in \eqref{rankdef2}, we have
\begin{align}
&\bm{B}'_{1,t+1} = \alpha_1\bm{B}'_{1,1} + \sum_{j=2}^{t}\alpha_j\bm{B}'_{1,j} \nn\\
&~~~~~~~~= - \alpha_1\bm{B}'_{1,t+1} + \left(- \alpha_1C_1 + \sum_{j=2}^{t}\alpha_j\bm{B}'_{1,j}\right). \nn\\
\Leftrightarrow~& (1+\alpha_1)\bm{B}'_{1,t+1} = - \alpha_1C_1 + \sum_{j=2}^{t}\alpha_j\bm{B}'_{1,j}. \label{rankdef3}
\end{align}
Note that $\bm{B}'_{1,t+1}$ is \emph{independent to $\alpha_1$, and independent to the right hand side of \eqref{rankdef3}}. Because $\bm{B}'_{1,t+1}$ is drawn from a continuous distribution, if $\alpha_1\ne -1$, the probability (conditioned on $\alpha_1\ne -1$) that \eqref{rankdef3} is satisfied is zero.
\vspace{10pt}

Next, we prove that the probability of $\alpha_1 = -1$ is zero. From \eqref{alphadef}, if $\alpha_1 = -1$,
\begin{align}
&\bm{B}'_{[2:t+1], t+1} = \sum_{j=1}^t\alpha_j\bm{B}'_{[2:t+1], j} \nn\\
\Leftrightarrow~ & \bm{B}'_{[2:t+1], 1} + \bm{B}'_{[2:t+1], t+1} = \sum_{j=2}^t\alpha_j\bm{B}'_{[2:t+1], j}.
\end{align}
Thus, $\alpha_1 = -1$ implies that $\bm{B}'_{[2:t+1], 1} + \bm{B}'_{[2:t+1], t+1}$ is in the range space of $\bm{B}'_{[2:t+1], [2:t]}$. Note that, all the entries in the two vectors $\bm{B}'_{[2:t+1], 1}$ and $\bm{B}'_{[2:t+1], t+1}$ are mutually independent non-diagonal entries of $\bm{B}$. 

Now, consider that we make the following \emph{change of distributions} of certain entries in $\bm{B}'$ (and also $\bm{B}$ correspondingly):
\begin{enumerate}
\item $\forall i = 2,\ldots,t+1$, let $\bm{B}'_{i, 1} (= \bm{B}_{r(i), c(1)})$ be drawn from the distribution of the \emph{sum} $\bm{B}'_{i, 1} + \bm{B}'_{i, t+1}$.
\item Let every entry of $\bm{B}'_{[2:t+1], t+1}$ to be a deterministic \emph{zero}, i.e., $\forall i = 2,\ldots,t+1,~ \bm{B}'_{i, t+1} (= \bm{B}_{r(i), c(t+1)}) = 0.$
\end{enumerate}
Observe that,
\begin{itemize}
\item As in Figure \ref{IJK22}, $\forall i, ~s.t.~ r(i)\in\mc{I}$, the \emph{only} entry in $\bm{B}'_{[2:t+1], [2:t]}$ that is correlated with $\bm{B}'_{i, 1}$ and $\bm{B}'_{i, t+1}$ is
    \begin{align}
    \bm{B}'_{i, i} = -(\bm{B}'_{i, 1} + \bm{B}'_{i, t+1}) - C_2,
    \end{align}
    where $C_2 = \sum_{j\ne c(1),j\ne c(t+1)}\bm{B}_{r(i),j}$. Note that $\bm{B}'_{i, i}$ depends on $\bm{B}'_{i, 1},\bm{B}'_{i, t+1}$ only via the \emph{sum} $\bm{B}'_{i, 1} + \bm{B}'_{i, t+1}$.
\item $\forall i, ~s.t.~ r(i)\in\mc{J}$, $\bm{B}'_{i, 1}$ and $\bm{B}'_{i, t+1}$ are independent to all the entries in $\bm{B}'_{[2:t+1], [2:t]}$.
\end{itemize}
This implies the following fact:

\begin{fct} The joint distribution of $\bm{B}'_{[2:t+1], 1}$ and $\bm{B}'_{[2:t+1], [2:t]}$ after the change of distributions is equal to the joint distribution of $\bm{B}'_{[2:t+1], 1} + \bm{B}'_{[2:t+1], t+1}$ and $\bm{B}'_{[2:t+1], [2:t]}$ before the change.
\end{fct}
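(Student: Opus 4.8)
The plan is to prove the Fact by a change-of-variables argument: I will show that both sides arise as the image of the same family of independent continuous edge-weight variables under the same measurable map, whence their laws coincide. Writing $S_i\triangleq\bm{B}'_{i,1}+\bm{B}'_{i,t+1}$ for $i\in[2:t+1]$, the point is that each side is determined by the vector $(S_i)_i$ --- played before the change by the genuine sums and after the change by the redrawn $\tilde{\bm{B}}'_{i,1}$ --- together with the entries of $\bm{B}'_{[2:t+1],[2:t]}$ that avoid columns $1$ and $t+1$.

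First I would separate the retained entries into free off-diagonal entries of $\bm{B}$ (mutually independent continuous variables, up to the symmetry $\bm{B}_{ij}=\bm{B}_{ji}$) and derived diagonal entries (negative row sums). For every $i\in[2:t+1]$ the entries $\bm{B}'_{i,1}=\bm{B}_{r(i),c(1)}$ and $\bm{B}'_{i,t+1}=\bm{B}_{r(i),c(t+1)}$ are free off-diagonal entries, since $r(i)\ne c(1)=r(1)$ and $r(i)\ne c(t+1)$ (because $c(t+1)\in\mc{K}$ is not a row index). Hence the $S_i$ are independent across $i$ both before the change (sums over distinct rows of off-diagonal entries) and after it (independent draws from the law of $S_i$), so $(S_i)_i$ has the same joint law in the two settings.

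Next I would use the two structural observations stated just before the Fact to confine the influence of columns $1$ and $t+1$ on the block $\bm{B}'_{[2:t+1],[2:t]}$ to the $S_i$ alone. For $r(i)\in\mc{I}$ the only coupling is the diagonal $\bm{B}'_{i,i}=-(\bm{B}'_{i,1}+\bm{B}'_{i,t+1})-C_2=-S_i-C_2$, depending on the two entries solely through $S_i$, where $C_2=\sum_{j\ne c(1),\,j\ne c(t+1)}\bm{B}_{r(i),j}$ avoids both columns and is left untouched by the change; for $r(i)\in\mc{J}$ there is no coupling at all. A brief check using symmetry shows that the partners $\bm{B}_{c(1),r(i)}$ and $\bm{B}_{c(t+1),r(i)}$ lie neither in the block nor in $C_2$, so $(S_i)_i$ is independent of $C_2$ together with the off-diagonal block entries. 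After the change $\tilde{\bm{B}}'_{i,t+1}=0$ gives $\tilde{\bm{B}}'_{i,i}=-\tilde{\bm{B}}'_{i,1}-C_2$, which, since $\tilde{\bm{B}}'_{i,1}\stackrel{d}{=}S_i$ and $C_2$ is shared, has the same joint law with the block entries and with the observed first coordinate as $\bm{B}'_{i,i}=-S_i-C_2$ before the change; all remaining block entries lie in columns $\ne 1,t+1$ and are the identical draws. As the first coordinate equals $\tilde{\bm{B}}'_{[2:t+1],1}\stackrel{d}{=}\bm{B}'_{[2:t+1],1}+\bm{B}'_{[2:t+1],t+1}$, the two joint laws agree.

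The main obstacle I anticipate is the careful bookkeeping of the Laplacian's dependence structure needed to certify that $S_i$ is a sufficient statistic --- namely, verifying through the explicit row-sum and symmetry relations that $\bm{B}'_{i,1}$ and $\bm{B}'_{i,t+1}$ enter every retained quantity only via $S_i$, while the residual randomness ($C_2$ and the middle-column entries) is common to both experiments and independent of $(S_i)_i$. Once these sufficiency and independence facts are pinned down from the formulas for $\bm{B}$, the distributional identity follows at once.
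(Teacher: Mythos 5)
Your argument is correct and follows essentially the same route as the paper: the two bulleted observations preceding the Fact (that the coupling to columns $1$ and $t+1$ occurs only through the diagonal entries $\bm{B}'_{i,i}=-(\bm{B}'_{i,1}+\bm{B}'_{i,t+1})-C_2$ for $r(i)\in\mc{I}$, and not at all for $r(i)\in\mc{J}$) are exactly the sufficiency-of-the-sum facts you formalize. Your additional bookkeeping --- checking via the symmetry and row-sum relations that the $S_i$ are mutually independent and independent of $C_2$ and the middle-column entries --- is a faithful and slightly more careful rendering of what the paper leaves implicit.
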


We now note that, after the change of distributions, the $t\times t$ matrix $\bm{B}'_{[2:t+1], [1:t]} = [\bm{B}'_{[2:t+1], 1} ~\bm{B}'_{[2:t+1], [2:t]}]$ still satisfies the induction assumption, and is hence of \emph{full rank with probability one}. Thus, before the change of distributions, $\bm{B}'_{[2:t+1], 1} + \bm{B}'_{[2:t+1], t+1}$ falls in the range space of $\bm{B}'_{[2:t+1], [2:t]}$ \emph{with probability zero}. Therefore, $\alpha_1 = -1$ with probability zero, hence the probability that \eqref{rankdef2} is satisfied is zero. As a result, $\bm{B}'$ is of full rank with probability one.
\end{proof}

\section{Proof of Lemma \ref{cond1equ}} \label{secprfcond1}
We first prove the following lemma in preparation for proving Lemma \ref{cond1equ}:
\begin{lem}\label{lemcond1}
For a matrix $\bm{H}\in\mbb{R}^{N\times N}$, if the following conditions are satisfied,
\begin{align}
&\bm{H}_{1,1} \ne0, \label{cond11}\\
&\forall i=2,\ldots,N,\bm{H}_{i,i-1} \ne0, \label{cond12}\\
&\forall i=2,\ldots,N, \text{ the sub-column } \bm{H}_{[1,i],i} \text{ has at least one} \nn\\
&~~~~~~~~~~~~~~~~~~\text{non-zero entry,} \label{cond13}
\end{align}

then $\bm{H}$ satisfies Property \ref{cond1def}.
\end{lem}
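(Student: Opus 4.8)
The plan is to prove Property \ref{cond1def} directly in its combinatorial form: for every $n$ and every choice of $n$ rows $R = \{r_1 < r_2 < \cdots < r_n\}$, the corresponding $n\times N$ submatrix has at least $n$ columns containing a non-zero entry. Writing $C(R)$ for the set of columns that contain a non-zero entry somewhere in the rows $R$, the goal is exactly $|C(R)| \ge n$ for every row set $R$. The backbone of the argument is the subdiagonal supplied by \eqref{cond11} and \eqref{cond12}: each row $r \ge 2$ has $\bm{H}_{r,r-1} \ne 0$, so $r-1 \in C(R)$, and the columns $r_k - 1$ are pairwise distinct because the $r_k$ are. Hence if $1 \notin R$, the $n$ columns $r_1 - 1 < \cdots < r_n - 1$ are already $n$ distinct members of $C(R)$ and we are done.

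So first I would dispose of the easy cases: $1 \notin R$ as above, and the all-rows case $R = \{1,\ldots,N\}$, where \eqref{cond11} and \eqref{cond13} make every one of the $N$ columns non-zero. In all other cases we have $1 \in R$ and $R$ is not the full row set.

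The remaining, and essential, case is $1 \in R$. Here I would isolate the maximal initial run $\{1, 2, \ldots, m\} \subseteq R$; since $R$ is not the full row set, $m \le N-1$ and $m+1 \notin R$. If $m = 1$ (so $2 \notin R$), the diagonal column $1$ together with the $n-1$ subdiagonal columns of the higher rows, which are all $\ge 2$, already give $n$ distinct columns. The genuine difficulty is $m \ge 2$: row $1$ contributes column $1$ via the diagonal entry \eqref{cond11}, while rows $2, \ldots, m$ contribute columns $1, \ldots, m-1$ via the subdiagonal, so these $m$ rows together account for only the $m-1$ columns $\{1, \ldots, m-1\}$, a deficit of exactly one. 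The rows of $R$ lying above the run are all $\ge m+2$, so their subdiagonal columns $r-1$ are all $\ge m+1$ and distinct, contributing a further $n-m$ columns disjoint from $\{1,\ldots,m-1\}$.

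The deficit is repaired by \eqref{cond13} applied to column $m$, which is legitimate since $m \ge 2$: that sub-column has a non-zero entry among rows $1, \ldots, m$, all of which lie in $R$, so $m \in C(R)$; and since $m$ lies strictly between the two blocks already counted it is genuinely new. A final count gives $(m-1) + (n-m) + 1 = n$ distinct columns in $C(R)$, completing the case and hence the lemma. I expect the main obstacle to be the bookkeeping that pins down \emph{where} the single column deficit arises and the verification that the column $m$ furnished by \eqref{cond13} is disjoint from both previously counted blocks. The key conceptual point, which I would state explicitly, is that among consecutive-row runs only the run containing row $1$ can be column-deficient, precisely because row $1$ alone has no subdiagonal entry and must instead reuse the diagonal column $1$ already claimed by row $2$.
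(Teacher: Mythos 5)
Your proof is correct, and it takes a genuinely different route from the paper's. The paper proves Lemma \ref{lemcond1} by induction on the matrix dimension $N$: for a given $n\times N$ submatrix it either uses the subdiagonal alone (when the last $n$ rows are selected) or locates a missing row $r^*\ge N-n+1$, splits the selected rows at $r^*$, handles the upper block by the subdiagonal and the lower block by appealing to the induction hypothesis on the upper-left $(r^*-1)\times(r^*-1)$ principal submatrix; condition \eqref{cond13} enters only through the ``all rows selected'' case at each level of the recursion. You instead give a direct, non-inductive count: the subdiagonal entries from \eqref{cond12} already supply $n$ distinct columns unless row $1$ is selected, in which case the only possible deficit is the single column lost inside the maximal initial run $\{1,\dots,m\}\subseteq R$ (rows $1,\dots,m$ jointly claim only columns $1,\dots,m-1$), and that deficit is repaired by one application of \eqref{cond13} to column $m$, whose witnessing row lies in $\{1,\dots,m\}\subseteq R$ and whose index sits strictly between the two blocks of columns already counted. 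The bookkeeping is airtight in all cases (including $m=1$ and $R=\{1,\dots,N\}$), and the final tally $(m-1)+1+(n-m)=n$ is exact. What your approach buys is a shorter, more transparent argument that isolates precisely where and why Property \ref{cond1def} could fail and which hypothesis rescues it; what the paper's induction buys is structural parallelism with the induction on principal submatrices used elsewhere in Appendix C (notably in the proof of Lemma \ref{cond1equ}, which invokes Lemma \ref{lemcond1} on upper-left blocks), at the cost of a less direct argument.
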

A depiction of a matrix satisfying \eqref{cond11}, \eqref{cond12} and \eqref{cond13} is given in Figure \ref{cond1fig}, in which the entries with an ``x'' are known to be non-zero, and the shaded sub-columns each has at least one non-zero entry.

\begin{figure}[htb!]
  \centering
  \includegraphics[scale=0.8]{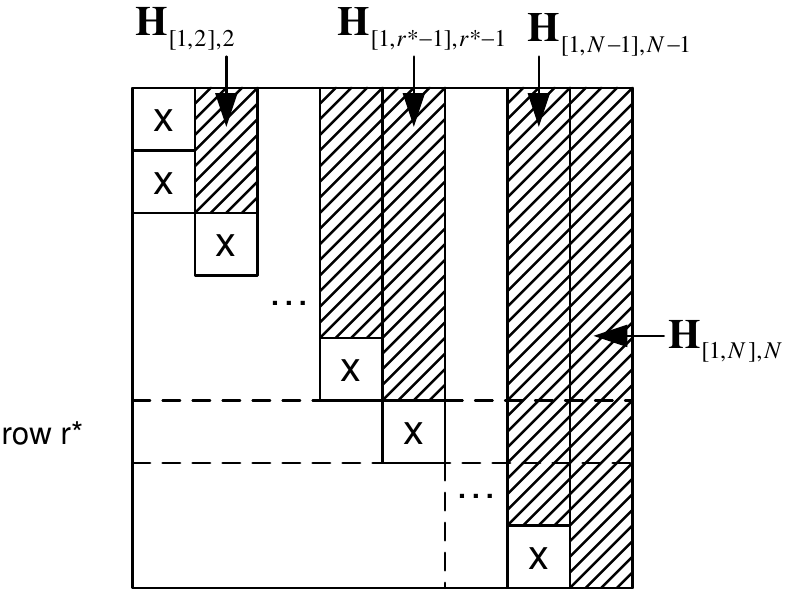}
  \caption{A matrix that satisfies \eqref{cond11}, \eqref{cond12} and \eqref{cond13} as in Lemma \ref{lemcond1}.}
  \label{cond1fig}
\end{figure}

\begin{proof}
We use induction as follows.

i) The lemma is true for $N = 1$.

ii) Assume that the lemma is true for all $N = 1,\ldots,t$. For $N = t+1$:

First, because the upper left $(N-1)\times (N-1)$ submatrix of $\bm{H}$ satisfies the induction assumption, each of $\bm{H}$'s first left $N-1$ columns must contain at least one non-zero entry. From \eqref{cond13}, the last column of $\bm{H}$ has at least one non-zero entry. Thus, the case of $n=N$ in Property \ref{cond1def} holds for $\bm{H}$.

Next, $\forall 1\le n\le N-1$, for any $n\times N$ submatrix of $\bm{H}$, denote it by $\bm{H}'$, and its corresponding row indices in $\bm{H}$ by $r(1)<r(2)<\ldots< r(n)$.
\begin{itemize}
\item If the last $n$ rows of $\bm{H}$ are selected to form $\bm{H}'$, (i.e. $r(i) = i+N-n,~ i=1,\ldots,n$), from \eqref{cond12}, the columns $N-n,\ldots,N-1$ each has one non-zero entry, namely, $\bm{H}_{N-n+1,N-n},\ldots,\bm{H}_{N,N-1}$.
\item Otherwise, there exists a row $r^*$, 
$r^*\ge N-n+1$, which is not selected in $\bm{H}'$ (cf. Figure \ref{cond1fig}). In this case, the row indices of $\bm{H}'$ can be partitioned into two subsets: $\exists i(\in \{1,2,\ldots,n\}), r(1)<\ldots< r(i)\le r^*-1$ and $r^*+1\le r(i+1)<\ldots\le r(n)$. On the one hand, note that the upper left $(r^*-1)\times (r^*-1)$ submatrix of $\bm{H}$ satisfies the induction assumption. Thus, \emph{among the first $r^*-1$ columns} of the rows $r(1),\ldots, r(i)$, there exists $i$ columns each of which has one non-zero entry. On the other hand, from \eqref{cond12}, $\bm{H}_{r(i+1),r(i+1)-1},\ldots,\bm{H}_{r(n),r(n)-1}$ are all non-zero, and none of these non-zero entries appears in the first $r^*-1$ columns. Therefore, there exist $n$ columns of $\bm{H}'$ such that each of them has at least one non-zero entry.
\end{itemize}
\end{proof}

We now prove Lemma \ref{cond1equ}.
\begin{proof}[Proof of Lemma \ref{cond1equ}]
Clearly, Property \ref{cond1def} is implied by the non-zero diagonal property.

To prove that the non-zero diagonal property is also implied by Property \ref{cond1def}, we use induction on $\bm{N}$ as follows.

i) For $N=1$, the non-zero diagonal property is implied by Property \ref{cond1def}.

ii) Assume that $\forall N \le t, (t\ge1,)$ the non-zero diagonal property is implied by Property \ref{cond1def}.

For $N = t+1$, we use another induction on the number of rows $n$ of submatrices of $\bm{H}$ in proving a 
non-zero permuted diagonal property.
\vspace{10pt}

a) For $n=1$, directly from Property \ref{cond1def}, any $n\times N$ submatrix of $\bm{H}$ (i.e., any row of $\bm{H}$) has at least one non-zero entry.

b) Assume that $\forall n \le t <N$, $\forall n\times N$ submatrix of $\bm{H}$, denoted by $\bm{H}'$, it has the following property:
\begin{align}
&\exists \pi(i)~ (i = 1,\ldots, n) \text{ that satisfies } \pi(i)\ne\pi(j), \forall i\ne j, \nn\\
&s.t.~ \bm{H}'_{i,\pi(i)} \ne 0. \label{nonsquare}
\end{align}

\begin{figure}[htb!]
  \centering
  \includegraphics[scale=0.7]{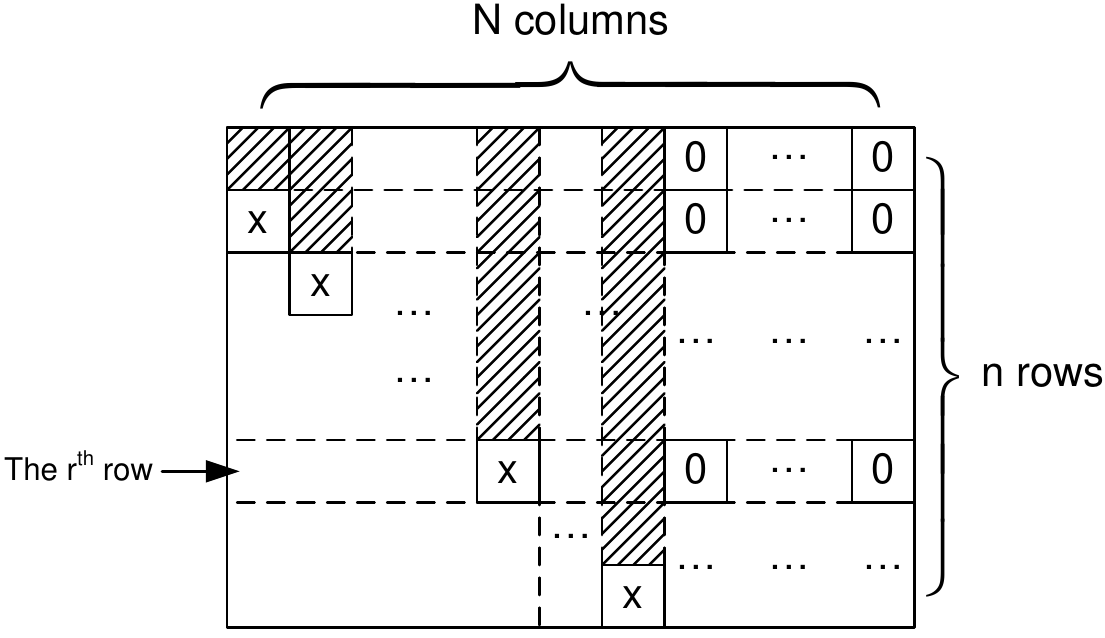}
  \caption{The matrix $\bm{H}'$ in the proof of Lemma \ref{cond1equ}.}
  \label{cond1fig2}
\end{figure}

For $n = t+1$, from the induction assumption b), there exists a 
non-zero permutated diagonal for the $(n-1)\times N$ submatrix $\bm{H}'_{[2:n],[1:N]}$: WLOG, assume that it corresponds to $\forall i=2,\ldots,n, \bm{H}'_{i,i-1} \ne 0$ (cf. Figure \ref{cond1fig2}.)

Now, we use proof by contradiction, and assume that $\bm{H}'$ does \emph{not} have a 
non-zero permuted diagonal. Then, the sub-row $\bm{H}'_{1,[n,N]}$ must be all zero, because otherwise any non-zero entry within $\bm{H}'_{1,[n,N]}$ will form a 
non-zero permuted diagonal of $\bm{H}'$ with $\bm{H}'_{i,i-1} (i=2,\ldots,n)$. From Property \ref{cond1def}, the $1^{st}$ row of $\bm{H}'$ must have at least one non-zero entry. WLOG, assume that $\bm{H}'_{1,1}$ is non-zero. Then, the sub-row $\bm{H}'_{2,[n,N]}$ must be all-zero, because otherwise any non-zero entry within $\bm{H}'_{2,[n,N]}$ will form a 
non-zero permuted diagonal of $\bm{H}'$ with $\bm{H}'_{1,1}$ and $\bm{H}'_{i,i-1} (i=3,\ldots,n)$. From Property \ref{cond1def}, the first two rows of $\bm{H}'$ must have at least two columns each of which has at least one non-zero entry. Since $\bm{H}'_{2,1} \ne 0$, there is at least one more column of $\bm{H}'_{[1,2],[1,N]}$ that has at least one non-zero entry. WLOG, assume that the sub-column $\bm{H}'_{[1,2],2}$ has at least one non-zero entry, (cf. the shaded area in Figure \ref{cond1fig2}).

Similarly, consider the $r^{th}$ row of $\bm{H}'$. Note that the submatrix $\bm{H}'_{[1:r-1], [1:r-1]}$ satisfies \eqref{cond11}, \eqref{cond12} and \eqref{cond13}, and hence satisfies Property \ref{cond1def} by Lemma \ref{lemcond1}. From the induction assumption ii), $\bm{H}'_{[1:r-1], [1:r-1]}$ has a non-zero permuted diagonal. Then, the sub-row $\bm{H}'_{r,[n,N]}$ must be all-zero, because otherwise any non-zero entry within $\bm{H}'_{r,[n,N]}$ will form a 
non-zero permuted diagonal of $\bm{H}'$ with the non-zero permuted diagonal of $\bm{H}'_{[1:r-1], [1:r-1]}$ and $\bm{H}'_{i,i-1} (i=r+1,\ldots,n)$.

Therefore, the submatrix $\bm{H}'_{[1,N],[n,N]}$ must be all-zero. This implies that there are only $n-1$ (instead of $n$) columns of $\bm{H}'$ each of which has at least one non-zero entry, and hence contradicts with Property \ref{cond1def}.
\end{proof}


\bibliographystyle{IEEEtran}
{\bibliography{PHYATKJ}}

\begin{thebibliography}{10}
\providecommand{\url}[1]{#1}
\csname url@samestyle\endcsname
\providecommand{\newblock}{\relax}
\providecommand{\bibinfo}[2]{#2}
\providecommand{\BIBentrySTDinterwordspacing}{\spaceskip=0pt\relax}
\providecommand{\BIBentryALTinterwordstretchfactor}{4}
\providecommand{\BIBentryALTinterwordspacing}{\spaceskip=\fontdimen2\font plus
\BIBentryALTinterwordstretchfactor\fontdimen3\font minus
  \fontdimen4\font\relax}
\providecommand{\BIBforeignlanguage}[2]{{%
\expandafter\ifx\csname l@#1\endcsname\relax
\typeout{** WARNING: IEEEtran.bst: No hyphenation pattern has been}%
\typeout{** loaded for the language `#1'. Using the pattern for}%
\typeout{** the default language instead.}%
\else
\language=\csname l@#1\endcsname
\fi
#2}}
\providecommand{\BIBdecl}{\relax}
\BIBdecl

\bibitem{GuestEd}
H.~Gharavi and R.~Ghafurian, ``{Smart grid: the electric energy system of the
  future},'' \emph{{Proc. IEEE}}, vol. 99, no. 6, pp. 917 -- 921, June 2011.

\bibitem{Liusec}
Y.~Liu, P.~Ning, and M.~Reiter, ``{False data injection attacks against state
  estimation in electric power grids},'' \emph{{ACM Transactions on Information
  and System Security}}, vol.~14, no. {1, Article 13}, {May} 2011.

\bibitem{Leonload}
A.-H. Mohsenian-Rad and A.~Leon-Garcia, ``{Distributed internet-based load
  altering attacks against smart power grids},'' \emph{{IEEE Transactions on
  Smart Grid}}, vol. 2, no. 4, pp. 667--674, December 2011.

\bibitem{Liangsec}
Y.~Liang, H.~V. Poor, and S.~Shamai, ``Information theoretic security,''
  \emph{Found. Trends Commun. Inf. Theory}, vol.~5, no.~4, pp. 355--580, April
  2009.

\bibitem{harden}
J.~Salmeron, K.~Wood, and R.~Baldick, ``{Analysis of electric grid security
  under terrorist threat},'' \emph{{IEEE Transactions on Power Systems}}, vol.
  19, no. 2, pp. 905 -- 912, May 2004.

\bibitem{kezunovic2011smart}
M.~Kezunovic, ``Smart fault location for smart grids,'' \emph{IEEE Transactions
  on Smart Grid}, vol.~2, no.~1, pp. 11--22, March 2011.

\bibitem{ZGP12}
Y.~Zhao, A.~Goldsmith, and H.~V. Poor, ``{On PMU location selection for line
  outage detection in wide-area transmission networks},'' \emph{{Proc. of IEEE
  Power and Energy Society General Meeting}}, pp. 1--8, July 2012.

\bibitem{ZSRGP13}
Y.~Zhao, R.~Sevlian, R.~Rajagopal, A.~Goldsmith, and H.~V. Poor, ``Outage
  detection in power distribution networks with optimally-deployed power flow
  sensors,'' in \emph{Proc. of IEEE Power and Energy Society General Meeting},
  2013.

\bibitem{Tajergrid}
A.~Tajer, S.~Kar, H.~V. Poor, and S.~Cui, ``{Distributed joint cyber attack
  detection and state recovery in smart grids},'' \emph{{Proc. of IEEE
  International Conference on Smart Grid Communications (SmartGridComm)}}, pp.
  202 --207, October 2011.

\bibitem{Kosut11}
O.~Kosut, L.~Jia, R.~J. Thomas, and L.~Tong, ``{Malicious data attacks on the
  smart grid},'' \emph{{IEEE Transactions on Smart Grid}}, vol.~2, pp. 645 --
  658, Oct. 2011.

\bibitem{Souexact}
K.~C. Sou, H.~Sandberg, and K.~H. Johansson, ``{On the Exact Solution to a
  Smart Grid Cyber-Security Analysis Problem},'' \emph{{IEEE Transactions on
  Smart Grid}}, vol. 4, no. 2, pp. 856 -- 865, Jun. 2013.

\bibitem{Hexact}
J.~Hendrickx, K.~H. Johansson, R.~Jungers, H.~Sandberg, and K.~C. Sou,
  ``{Efficient computations of a security index for false data attacks in power
  networks},'' \emph{{eprint arXiv:1204.6174}}.

\bibitem{teixeira12}
A.~Teixeira, H.~Sandberg, G.~Dan, and K.~H. Johansson, ``{Optimal power flow:
  closing the loop over corrupted data},'' \emph{{Proc. American Control
  Conference}}, pp. 3534 -- 3540, Jun. 2012.

\bibitem{Liyan14}
L.~Jia, J.~Kim, R.~J. Thomas, and L.~Tong, ``{Impact of Data Quality on
  Real-Time Locational Marginal Price},'' \emph{{IEEE Transactions on Power
  Systems}}, vol. 29, no. 2, pp. 627 -- 636, Mar. 2014.

\bibitem{Jinsub}
J.~Kim and L.~Tong, ``{On Topology Attack of a Smart Grid: Undetectable Attacks
  and Countermeasures},'' \emph{{IEEE Journal on Selected Areas in
  Communications}}, vol. 31, no. 7, pp. 1294 -- 1305, Jul. 2013.

\bibitem{Giani11}
A.~Giani, E.~Bitar, M.~Garcia, M.~McQueen, P.~Khargonekar, and K.~Poolla,
  ``{Smart grid data integrity attacks: characterizations and
  countermeasures},'' \emph{{Proc. IEEE International Conference on Smart Grid
  Communications (SmartGridComm)}}, pp. 232--237, October 2011.

\bibitem{Tungattack}
T.~T. Kim and H.~V. Poor, ``{Strategic protection against data injection
  attacks on power grids},'' \emph{{IEEE Transactions on Smart Grid}}, vol.~2,
  no.~2, pp. 326--333, June 2011.

\bibitem{bobba10}
R.~B. Bobba, K.~M. Rogers, Q.~Wang, H.~Khurana, K.~Nahrstedt, and T.~J.
  Overbye, ``{Detecting false data injection attacks on dc state estimation},''
  \emph{{Proc. Workshop on Secure Control Systems}}, Apr. 2010.

\bibitem{Kartopo}
J.~Weimer, S.~Kar, and K.~H. Johansson, ``{Distributed detection and isolation
  of topology attacks in power networks},'' \emph{{Proc. 1st international
  conference on High Confidence Networked Systems}}, pp. 65--72, July 2012.

\bibitem{Florian}
F.~Pasqualetti, F.~Dorfler, and F.~Bullo, ``Attack detection and identification
  in cyber-physical systems,'' \emph{IEEE Transactions on Automatic Control},
  vol.~58, no.~11, pp. 2715--2729, 2013.

\bibitem{TabuadaJ}
H.~Fawzi, P.~Tabuada, and S.~Diggavi, ``{Secure estimation and control for
  cyber-physical systems under adversarial attacks},'' \emph{{eprint
  arXiv:1205.5073}}.

\bibitem{TabuadaC}
------, ``{Security for control systems under sensor and actuator attacks},''
  \emph{{Proc. IEEE 51st Annual Conference on Decision and Control (CDC)}}, pp.
  3412--3417, December 2012.

\bibitem{WW96}
A.~J. Wood and B.~F. Wollenberg, \emph{{Power Generation, Operation, and
  Control}}, 2nd~ed.\hskip 1em plus 0.5em minus 0.4em\relax John Wiley and
  Sons, 1996.

\bibitem{CTLP}
E.~J. Candes and T.~Tao, ``{Decoding by linear programming},'' \emph{{IEEE
  Transactions on Information Theory}}, vol. 51, no. 12, pp. 4203 -- 4215, Dec.
  2005.

\bibitem{sprankbook}
K.~Reinschke, \emph{{Multivariable Control - A Graph-Theoretic
  Approach}}.\hskip 1em plus 0.5em minus 0.4em\relax New York: Springer-Verlag,
  Lecture Notes in Control and Information Sciences, vol. 108, 1988.

\bibitem{grank}
R.~Johnston, G.~Barton, and M.~Brisk, ``{Determination of the generic rank of
  structural matrices},'' \emph{{Int. J. Control}}, vol.~40, pp. 257 -- 264,
  1984.

\bibitem{nodesplit}
M.~R. Henzingera, S.~Raob, and H.~N. Gabow, ``{Computing vertex connectivity:
  new bounds from old techniques},'' \emph{{Journal of Algorithms}}, vol. 34,
  no. 2, pp. 222 -- 250, February 2000.

\bibitem{vazirani1992suboptimal}
V.~Vazirani and M.~Yannakakis, ``Suboptimal cuts: Their enumeration, weight and
  number,'' \emph{Automata, {L}anguages and {P}rogramming}, pp. 366--377, 1992.

\bibitem{MatP11}
R.~D. Zimmerman, C.~E. Murillo-Sanchez, and R.~J. Thomas, ``{MATPOWER
  steady-state operations, planning and analysis tools for power systems
  research and education},'' \emph{{IEEE Transactions on Power Systems}}, vol.
  26, no. 1, pp. 12 -- 19, Feb. 2011.

\bibitem{obs93}
T.~Baldwin, L.~Mili, J.~M.B.~Boisen, and R.~Adapa, ``{Power system
  observability with minimal phasor measurement placement},'' \emph{{IEEE
  Transactions on Power Systems}}, vol. 8, no. 2, pp. 707 -- 715, May 1993.

\end{thebibliography}
}

\begin{IEEEbiography}[{\includegraphics[width=1in,height=1.25in,clip,keepaspectratio]{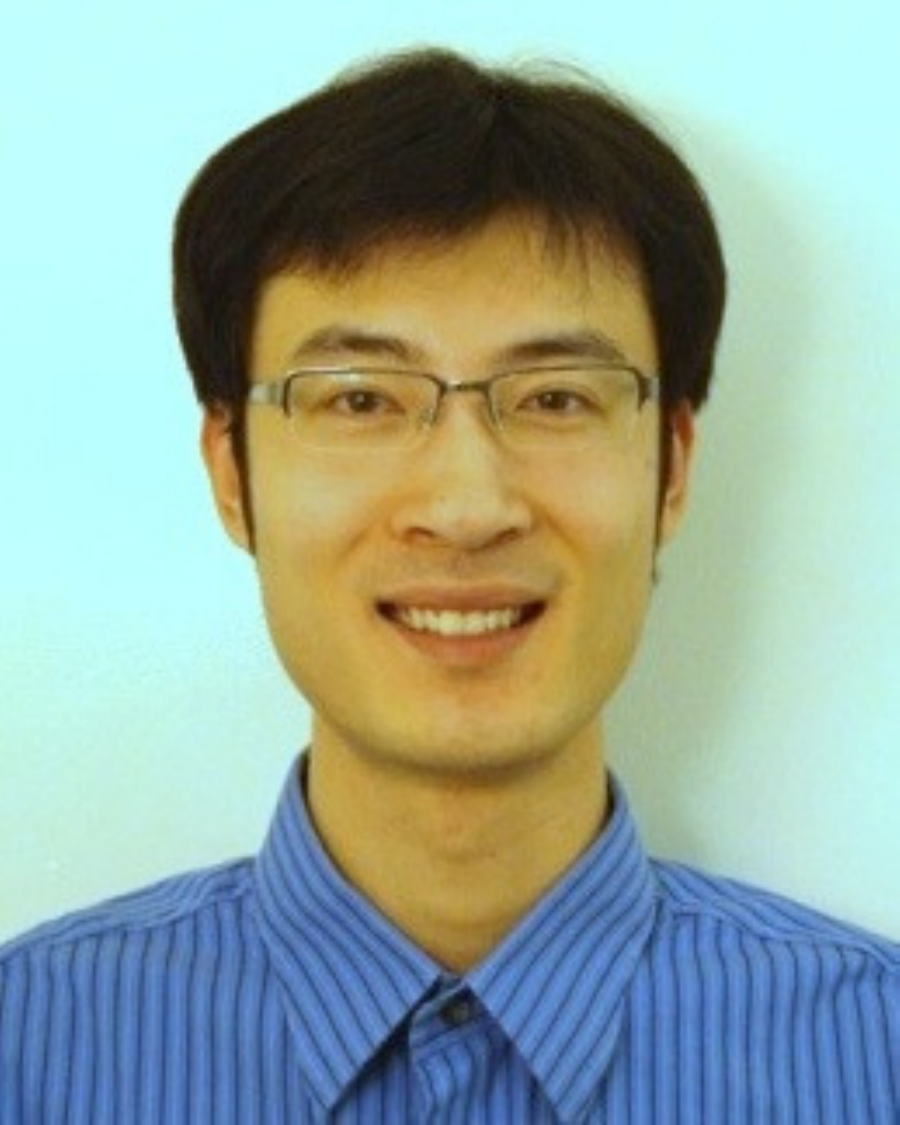}}]{YUE ZHAO}
(S'06, M'11) is an assistant professor of Electrical and Computer Engineering at Stony Brook University. He received the B.E. degree in Electronic Engineering from Tsinghua University, Beijing, China in 2006, and the M.S. and Ph.D. degrees in Electrical Engineering from the University of California, Los Angeles (UCLA), Los Angeles in 2007 and 2011, respectively. His current research interests include smart grid, renewable energy integration, optimization theory, stochastic control, and statistical signal processing. 
\end{IEEEbiography}

\begin{IEEEbiography}[{\includegraphics[width=1in,height=1.25in,clip,keepaspectratio]{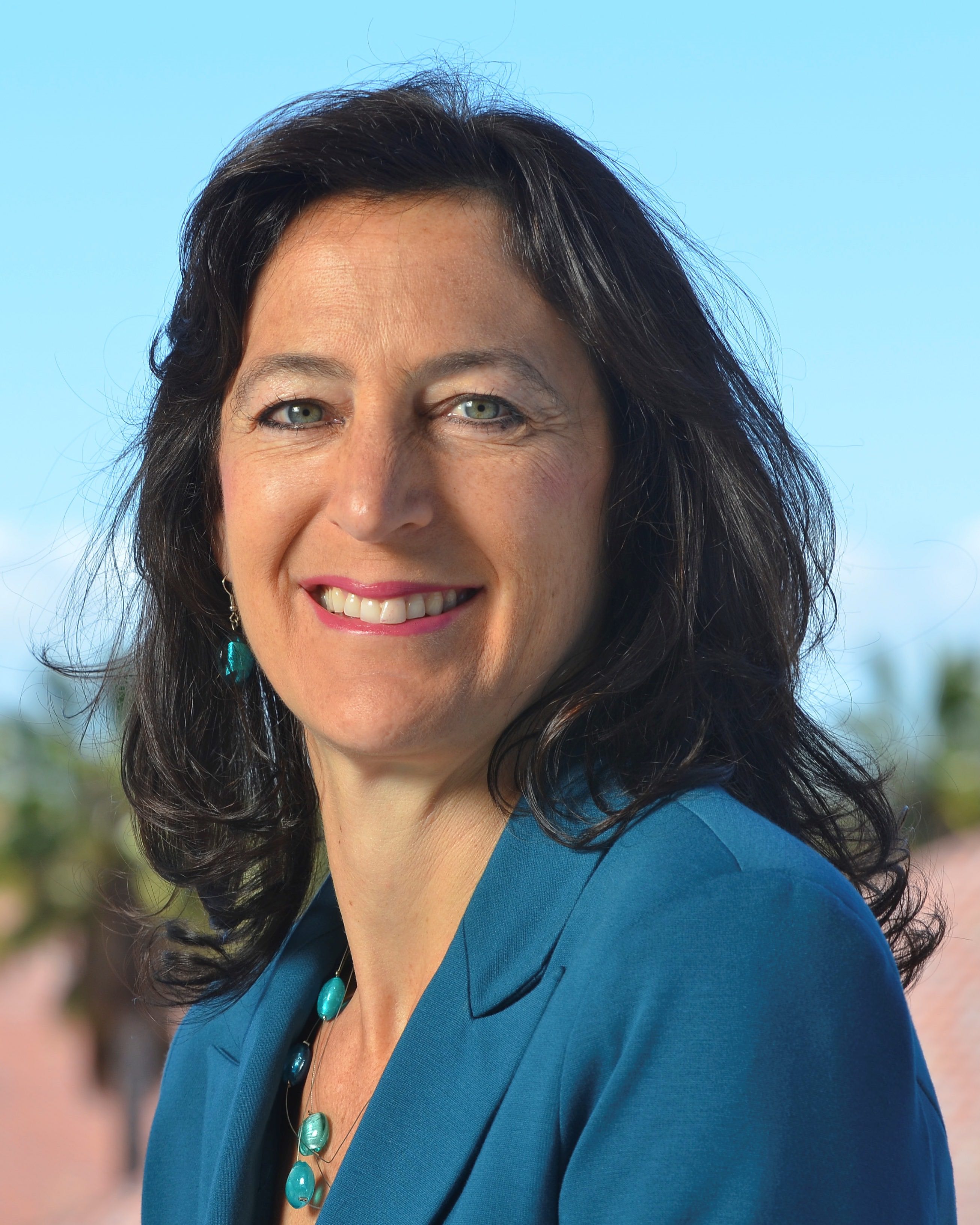}}]{Andrea Goldsmith}
(S'90, M'93, SM'99, F'05) 
is the Stephen Harris professor in the School of Engineering and a professor of Electrical Engineering at Stanford University. She was previously on the faculty of Electrical Engineering at Caltech. Dr. Goldsmith co-founded and served as CTO for two wireless companies: Wildfire.Exchange, which develops software-defined wireless network technology for cloud-based management of WiFi  systems, and  Quantenna Communications, Inc., which develops high-performance WiFi chipsets. She has previously held industry positions at Maxim Technologies, Memorylink Corporation, and AT\&T Bell Laboratories. She is a Fellow of the IEEE and of Stanford, and has received several awards for her work, including the IEEE ComSoc Edwin H. Armstrong Achievement Award as well as Technical Achievement Awards in Communications Theory and in Wireless Communications,  the National Academy of Engineering Gilbreth Lecture Award, the IEEE ComSoc and Information Theory Society Joint Paper Award, the IEEE ComSoc Best Tutorial Paper Award, the Alfred P. Sloan Fellowship, the WICE Outstanding Achievement Award, and the Silicon Valley/San Jose Business Journal's Women of Influence Award. She is author of the book ``Wireless Communications'' and co-author of the books ``MIMO Wireless Communications'' and ``Principles of Cognitive Radio,'' all published by Cambridge University Press, as well as an inventor on 28 patents. She received the B.S., M.S. and Ph.D. degrees in Electrical Engineering from U.C. Berkeley.

Dr. Goldsmith has served as editor for the IEEE Transactions on Information Theory, the Journal on Foundations and Trends in Communications and Information Theory and in Networks, the IEEE Transactions on Communications, and the IEEE Wireless Communications Magazine as well as on the Steering Committee for the IEEE Transactions on Wireless Communications. She participates actively in committees and conference organization for the IEEE Information Theory and Communications Societies and has served on the Board of Governors for both societies. She has also been a Distinguished Lecturer for both societies, served as President of the IEEE Information Theory Society in 2009, founded and chaired the Student Committee of the IEEE Information Theory Society, and chaired the Emerging Technology Committee of the IEEE Communications Society. At Stanford she received the inaugural University Postdoc Mentoring Award, served as Chair of Stanford's Faculty Senate in 2009, and currently serves on its Faculty Senate, Budget Group, and Task Force on Women and Leadership.

\end{IEEEbiography}

\begin{IEEEbiography}[{\includegraphics[width=1in,height=1.25in,clip,keepaspectratio]{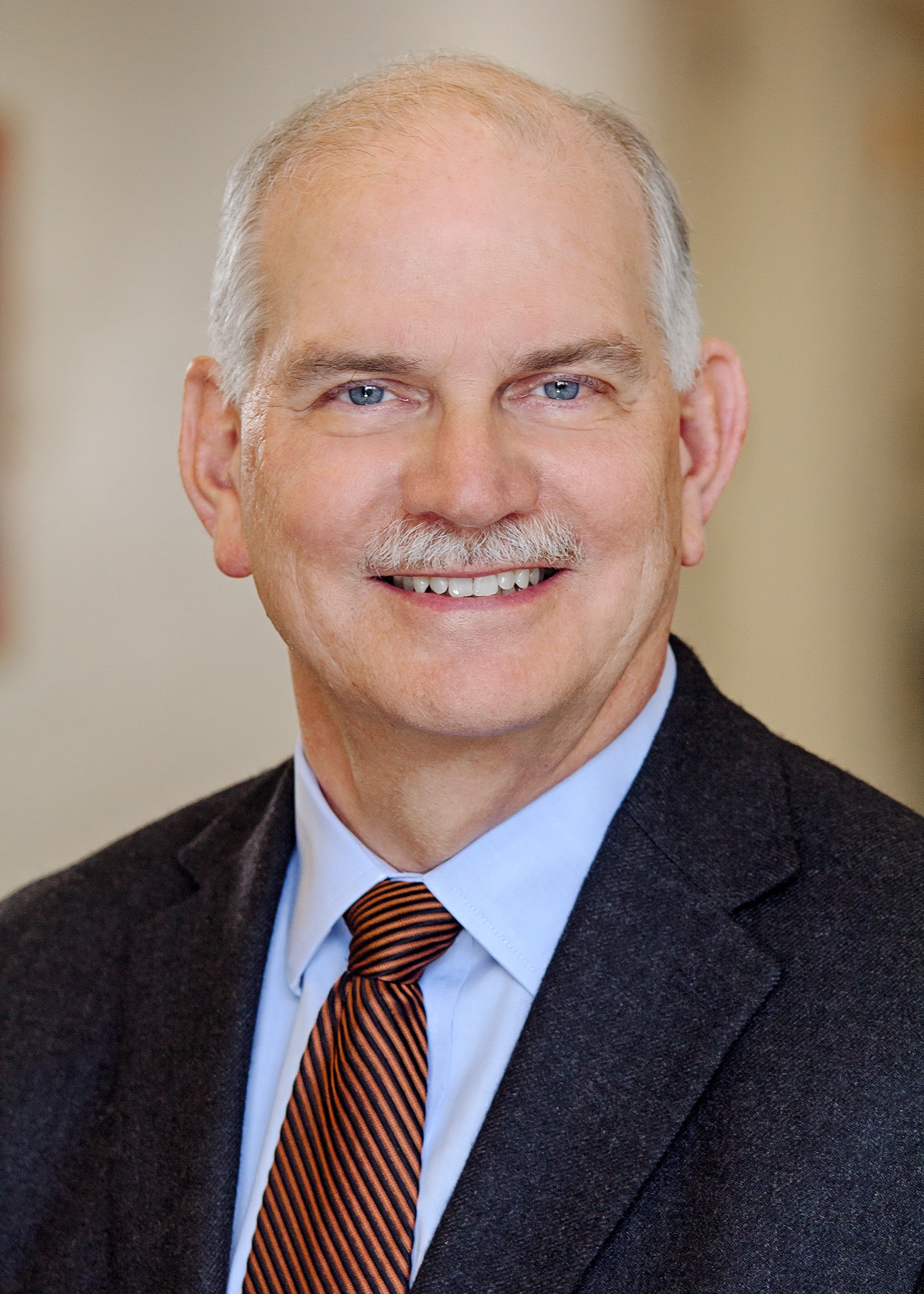}}]{H. Vincent Poor}
(S'72, M'77, SM'82, F'87) received the Ph.D. degree in electrical engineering and computer science from Princeton University in 1977.  From 1977 until 1990, he was on the faculty of the University of Illinois at Urbana-Champaign. Since 1990 he has been on the faculty at Princeton, where he is the Dean of Engineering and Applied Science, and the Michael Henry Strater University Professor of Electrical Engineering. He has also held visiting appointments at several other institutions, most recently at Imperial College and Stanford. His research interests are in the areas of information theory, stochastic analysis and statistical signal processing, and their applications in wireless networks and related fields such as smart grid. Among his publications in these areas is the recent book Mechanisms and Games for Dynamic Spectrum Allocation (Cambridge University Press, 2014).

Dr. Poor is a member of the National Academy of Engineering and the National Academy of Sciences, and is a foreign member of the Royal Society. He is also a fellow of the American Academy of Arts and Sciences and of other national and international academies. He received a Guggenheim Fellowship in 2002 and the IEEE Education Medal in 2005. Recent recognition of his work includes the 2014 URSI Booker Gold Medal, the 2015 EURASIP Athanasios Papoulis Award, the 2016 John Fritz Medal, and honorary doctorates from Aalborg University, Aalto University, HKUST, and the University of Edinburgh.

\end{IEEEbiography}

\end{document}